\DeclareMathAlphabet{\mathboondoxfrak}{U}{BOONDOX-frak}{m}{n}
\DeclareMathOperator{\coker}{coker}
\DeclareMathOperator{\Cl}{Cl}
\DeclareMathOperator{\Gal}{Gal}
\DeclareMathOperator{\GL}{GL}
\DeclareMathOperator{\Hom}{\textnormal{\textbf{Hom}}}
\DeclareMathOperator{\PGL}{PGL}
\def\Yint#1{\mathchoice
    {\YYint\displaystyle\textstyle{#1}}
    {\YYint\textstyle\scriptstyle{#1}}
    {\YYint\scriptstyle\scriptscriptstyle{#1}}
    {\YYint\scriptscriptstyle\scriptscriptstyle{#1}}
      \!\int}
\def\YYint#1#2#3{{\setbox0=\hbox{$#1{#2#3}{\int}$}
    \vcenter{\hbox{$#2#3$}}\kern-.52\wd0}}
\def\mint{\Yint\times}
\newcommand{\A}{\mathbb{A}}
\newcommand{\bbm}{\begin{bm}}
\newcommand{\bes}{\begin{center}\begin{tikzcd}[ampersand replacement=\&]}
\newcommand{\bpf}{\begin{proof}}
\newcommand{\bsm}{\big(\begin{smallmatrix}}
\newcommand{\cla}[1]{\{#1\}}
\newcommand{\C}{\mathbb{C}}
\newcommand{\D}{\mathbb{D}}
\newcommand{\ebm}{\end{bm}}
\newcommand{\ees}{ \end{tikzcd} \end{center}}
\newcommand{\epf}{\end{proof}}
\newcommand{\esm}{\end{smallmatrix}\big)}
\newcommand{\foodnote}[1]{\ignorespaces}
\newcommand{\icla}[1]{\left\lbrace#1\right\rbrace}
\newcommand{\indi}{\mathbbm{1}}
\newcommand{\Ind}{\textnormal{Ind}}
\newcommand{\inv}{{^{-1}}}
\newcommand{\ipa}[1]{\left(#1\right)}
\newcommand{\iy}{\infty}
\newcommand{\lra}{\longrightarrow}
\newcommand{\mbT}{\mathbb{T}}
\newcommand{\mcA}{\mathcal{A}}
\newcommand{\mcD}{\mathcal{D}}
\newcommand{\mcE}{\mathcal{E}}
\newcommand{\mcF}{\mathcal{F}}
\newcommand{\mcG}{\mathcal{G}}
\newcommand{\mcI}{\mathcal{I}}
\newcommand{\mcK}{\mathcal{K}}
\newcommand{\mcL}{\mathcal{L}}
\newcommand{\mcM}{\mathcal{M}}
\newcommand{\mcO}{\mathcal{O}}
\newcommand{\mcP}{\mathcal{P}}
\newcommand{\mcU}{\mathcal{U}}
\newcommand{\mcV}{\mathcal{V}}
\newcommand{\mfg}{\mathfrak{g}}
\newcommand{\mfm}{\mathfrak{m}}
\newcommand{\mfo}{\mathfrak{o}}
\newcommand{\mfp}{\mathfrak{p}}
\newcommand{\mfq}{\mathfrak{q}}
\newcommand{\N}{\mathbb{N}}
\newcommand{\ord}{\textnormal{ord}}
\newcommand{\oti}{\otimes}
\newcommand{\ovl}{\overline}
\newcommand{\PP}{\mathbb{P}}
\newcommand{\Q}{\mathbb{Q}}
\newcommand{\ra}{\rightarrow}
\newcommand{\R}{\mathbb{R}}
\newcommand{\siun}{\Sigma_{\tno{un}}(K/F)}
\newcommand{\tbf}{\textbf}
\newcommand{\ti}{\times}
\newcommand{\tno}{\textnormal}
\newcommand{\udl}{\underline}
\newcommand{\uhp}{\mathfrak{H}}
\newcommand{\Z}{\mathbb{Z}}
\newtheorem{theorem}{Theorem}[section]
\newtheorem{lemma}[theorem]{Lemma}
\newtheorem{remark}[theorem]{Remark}
\newtheorem{hypothesis}[theorem]{Hypothesis}
\newtheorem{definition}[theorem]{Definition}
\newtheorem{proposition}[theorem]{Proposition}
\newenvironment{bm}{ \begin{pmatrix} }{ \end{pmatrix} } 
\begin{document}
\title{Plectic points and Hida-Rankin $p$-adic $L$-functions}
\author{V\'ictor Hern\'andez Barrios and Santiago Molina Blanco}
\maketitle
\begin{abstract}
Plectic points were introduced by Fornea and Gehrmann in \cite{fornea2021plectic} as certain tensor products of local points on elliptic curves over arbitrary number fields $F$. They are constructed in rank $r\leq[F:\Q]$-situations, and they conjecturally come from $p$-adic regulators of basis of the Mordell-Weil group defined over dihedral extensions of $F$.

In this article we define anticyclotomic $p$-adic L-functions with variables of type weight and level attached to a family of overconvergent modular symbols defined over totally real fields $F$ and a quadratic extension $K/F$. Their restriction to the weight space provide Hida-Rankin $p$-adic L-functions. 

If such a family passes through an overconvergent modular symbol attached to a modular elliptic curve $E$,  we obtain a $p$-adic Gross-Zagier formula that computes higher derivatives of such Hida-Rankin $p$-adic L-functions in terms of plectic points. This result generalizes that of Bertolini and Darmon in \cite{BD09}, which has been key to the recent approach towards the algebraicity of Darmon points.
\end{abstract}


\section{Introduction}

Let $E$ be a modular elliptic curve over a totally real number field $F$ of degree $d=[F:\Q]$.
One of the most important breakthroughs towards the Birch and Swinnerton-Dyer conjecture is the well-known Gross-Zagier formula, that relates the first derivative of a Rankin-Selberg L-function attached to $E$ with the height of a classical Heegner point. Such Heegner points are defined over abelian extensions of a CM number field $K$ which is quadratic over $F$. The theory of Heegner points together with the Gross-Zagier formula are the main ingredients to prove cases of the BSD conjecture in rank 0 and 1 situations, which is the best progress towards the conjecture we have so far.

Due to the success of Heegner points, such a construction has been generalized to other settings, for instance, the cases where 
the extension $K/F$ is just quadratic not necessarily CM. The first of these constructions is due to Darmon \cite{Darmon2001}, and it covers the case where $F=\mathbb{Q}$ and $K$ is a real quadratic field. Darmon's construction applies in rank one situations, and even though it is a purely $p$-adic construction, it is conjectured to produce algebraic points defined over abelian extensions of $K$. 
The original construction has been generalized 
in \cite{Dasgupta}, \cite{Greenberg}, \cite{Gartner}, \cite{GMS} and \cite{guitart2017automorphic}, providing points in $E(K_v)$, for certain completions $K_v$ of $K$, which are conjectured to be algebraic.

Very recently, a new construction that conjecturally works for ranks $r\geq 1$ situations has emerged due to Fornea and Gehrmann (see \cite{fornea2021plectic}). Such a construction applies when there are $r$ prime ideals $\mathfrak{P}_1,\cdots,\mathfrak{P}_r$ of $F$ above a rational prime $p$ where the curve $E$ has multiplicative reduction. Moreover, Fornea and Gehrmann additionally assume in \cite{fornea2021plectic} that the quadratic extension $K/F$ is inert at those primes, although such a construction can be extended to the non-split setting as we explain in \S \ref{plecticpoints}. In the ramified situations the arithmetic properties of the plectic points will differ somewhat from those conjectured by Fornea-Gehrmann since the local signs of the functional equations are more subtle in these scenarios. For simplicity, we will stick to the inert case in this introduction; otherwise the notations will get a bit more complicated. Given a dihedral finite order character $\xi$ of the absolute Galois group of $K$ unramified at all places $\mathfrak{P}_i$, $i=1,\cdots,r$, one obtains an element
\[
P_\xi^{S_0}\in \bigotimes_{j=1}^r \left(\hat E(K_{\mathfrak{P}_j})\otimes_{\Z_p}\bar\Z_p\right),\qquad S_0=\{\mathfrak{P}_1,\cdots,\mathfrak{P}_r\},
\]
where $\hat E(K_{\mathfrak{P}_j})$ is the $p$-adic completion of $E(K_{\mathfrak{P}_j})$, $\bar\Z_p$ is the integer ring of $\bar\Q_p$, the tensor product is taken as $\bar\Z_p$-modules and the field $K_{\mathfrak{P}_j}$ is the completion of $K$ at the prime ideal above $\mathfrak{P}_j$. When $r=1$, one recovers the aforementioned Darmon points.
As can be seen, such a construction is $p$-adic, namely, as a result we get (tensor product of) points defined over finite extensions of $\mathbb{Q}_p$. In analogy with the theory of Heegner points, it is conjectured in \cite{fornea2021plectic} that such points are defined over certain explicit abelian extensions of $K$. More precisely, if we write $E(K)_{\xi}$ for the $\xi$-isotypical component of the set of points defined over the maximal abelian extension of $K$ and the rank of $E(K)_\xi$ is $r$, then it is expected that there exist $r$ points $P_1,\cdots,P_r\in E(K)_\xi$ such that 
\[
P_\xi^{S_0}=\det(P_1\wedge\cdots\wedge P_r),
\]
where
\[
\det:\bigwedge^r E(K)_\xi\longmapsto \bigotimes_{j=1}^r \left(\hat E(K_{\mathfrak{P}_j})\otimes_{\Z_p}\bar\Z_p\right);\qquad \det(P_1\wedge\cdots\wedge P_r):= \det\left(\begin{array}{ccc}\imath_{\mathfrak{P}_1}(P_1)&\cdots&\imath_{\mathfrak{P}_r}(P_1)\\&\cdots&\\\imath_{\mathfrak{P}_1}(P_r)&\cdots&\imath_{\mathfrak{P}_r}(P_r)\end{array}\right),
\]
and 
$\imath_{\mathfrak{P}}:E(K)_\xi\rightarrow \left(\hat E(K_{\mathfrak{P}_j})\otimes_{\Z_p}\bar\Z_p\right)$
is the natural morphism provided by the inclusion $K\subset K_{\mathfrak{P}}$.

If we go back to the case $r=1$, some progress towards algebraicity of Darmon points has been made. In \cite{BD09} and \cite{longo} a strategy for $F=\Q$ and $K$ real quadratic is carried out that consists of realising Darmon points as derivatives of Hida–Rankin $p$-adic L-functions. This result allows one to show that certain linear combination of Darmon points with coefficients given by values of genus characters of $K$ comes from a global point defined over the Hilbert class field of $K$. Moreover, it also provides the crucial bridge between Darmon points and generalised Kato classes arising from $p$-adic families of diagonal cycles. The results of \cite{BD09} and \cite{longo} can also be exploited to make a similar comparison with Heegner points, and the explicit comparison between Heegner or Darmon points and generalised Kato classes is the main tool of \cite{BDRSV} to finally prove algebraicity of Darmon points, under the non-negligible assumption of the finitude of the $p$-part of the Tate-Shafarevich group.

If we want to follow the footsteps of \cite{BDRSV} in order to prove algebraicity of plectic points, we need to prove an analogous $p$-adic Gross-Zagier formula that realizes plectic points as derivatives of Hida–Rankin $p$-adic L-functions. In this note, we will construct such Hida–Rankin $p$-adic L-functions as restrictions to the weight space of $p$-adic L-functions with variables of type weight and level interpolating anticyclotomic $p$-adic L-functions. 

Let $B$ be a quaternion algebra over $F$ and let $G$ be the algebraic group attached to $B^\ti/F^\ti$. Assume that $E$ is associated with a weight 2 automorphic form of $G$, and let $\pi$ be the automorphic representation that it generates. The Harder-Eichler-Shimura isomorphism realizes $\pi$ in the cohomology spaces of certain arithmetic subgroups of $G$. Thus, our automorphic forms give rise to cohomology classes $\phi_\lambda^p$ that we will call (automorphic) modular symbols (see \S \ref{GenAutforms} for a precise description of $\phi^p_\lambda$).
Let $K/F$ be a quadratic extension admitting an embedding $K\hookrightarrow B$ and such that the set of infinite places where $K/F$ splits coincides with that of $B$. In \cite{fornea2021plectic} and \cite{HerMol1} an anticyclotomic $p$-adic L-function associated with $E$ and $K/F$ is constructed by means of $\phi^p_\lambda$. 
Such a $p$-adic L-function is thought of as a $p$-adic distribution $\mu_{\phi^p_\lambda}$ of the  Galois group $\mcG_T$ of the anticyclotomic abelian extension of $K$. Moreover, the main result in both \cite{fornea2021plectic} and \cite{HerMol1} provides a $p$-adic Gross-Zagier formula that computes higher derivatives of $\mu_{\phi^p_\lambda}$ in terms of plectic points and L-invariants. Following the philosophy of Mazur-Tate in \cite{MT}, we understand the evaluation of a distribution at a character $\xi$ as its image through the ring homomorphism
\[
\varphi_\xi:{\rm Meas}(\mcG_T)\longrightarrow\C_p,\qquad \varphi_\xi(\mu)=\int_{\mcG_T}\xi(\gamma)d\mu(\gamma).
\]
Therefore, if we write $I_\xi=\ker(\varphi_\xi)$ for the augmentation ideal, the distribution $\mu_{\phi^p_\lambda}$ lies in $I_\xi^r$ when it has a zero of order $r$ at $\xi$. If this is the case, the value of
the $r$-th derivative of $\mu_{\phi^p_\lambda}$ at $\xi$ is understood to be its image in $I_\xi^r/I_\xi^{r+1}$. There is a natural way to send elements of $\bigotimes_{j=1}^r \hat E(K_{\mfp_j})_\xi$ to $I_\xi^r$, and the $p$-adic Gross-Zagier formula relates $\mu_{\phi^p_\lambda}$ with the image of $P_\xi^S$ modulo $I_\xi^{r+1}$. In this paper we will obtain a similar result but exchanging $\mu_{\phi^p_\lambda}$ with a new $p$-adic L-function $L_p(\phi_\lambda^p,\xi,\underline{k})$ depending on the weight $\underline{k}+2\in (2\N)^d$ of the automorphic modular symbol. For these purposes, we need to extend the definition of $\mu_{\phi^p_\lambda}$ to scenarios where the automorphic modular symbols $\phi^p_\lambda$ have arbitrary even weight and interpolate the distributions $\mu_{\phi^p_\lambda}$ in families.

In this paper we first give a general construction of locally polynomial anticyclotomic distributions associated with automorphic modular symbols $\phi_\lambda^p$ of even weight $\underline{k}+2$. We will generalize our construction of $\mu_{\phi^p_\lambda}$ described in \cite{HerMol1} for weight 2 to the setting of arbitrary even weight. We will study in \S \ref{section:IntProp} the \emph{interpolation properties} that relate $\mu_{\phi^p_\lambda}$ to classical Rankin-Selberg L-functions attached to the representation $\pi$ twisted by a locally polynomial character. Unlike the rest of the paper, in this section we will make a careful choice of the modular symbol $\phi^p_\lambda$ under certain simplifying hypothesis (hypothesis \ref{simplhyp}) in order to obtain more explicit interpolation properties.    
Finally, we will show that in the ordinary scenario our locally polynomial $\mu_{\phi^p_\lambda}$ extends to a continuous measure. Following the ideas of Pollack-Stevens in \cite{PS}, such continuous measure $\mu_{\phi^p_\lambda}$ will be obtained as the cap-product of an overconvergent modular symbol $\hat\phi_\lambda^p$ extending $\phi_\lambda^p$ and a fundamental class associated with the quadratic extension $K/F$.

Once we have defined $\mu_{\phi^p_\lambda}$ for arbitrary even weights, one needs to interpolate such continuous measures as the weight varies. For this, we need to introduce the weight space. We will restrict ourselves to the ordinary case, hence it will be enough to work with classical Iwasawa algebras, although 
we believe that everything can be extended to the non-critical setting of Coleman families using standard techniques.
Let 
\[
\Lambda_F=\Z_p[[\mfo_{F_p}^\ti]],\qquad \mfo_{F_p}:=\mfo_F\oti_\Z\Z_p,
\]
be the Iwasawa algebra associated with $\mfo_{F_p}^\ti$. It classifies continuous characters $\chi_p:\mfo_{F_p}^\ti\rightarrow R^\ti$, for any complete $\Z_p$-algebra $R$. In particular, any even weight $\underline{k}=(k_\sigma)\in (2\N)^d$ is seen as a point in the spectrum of $\Lambda_F$ because it defines the character $x\mapsto x^{\underline{k}}:=\prod_\sigma \sigma(x)^{k_\sigma}$. These types of weights will usually be referred to as \emph{classical weights}.
The idea behind the construction of the Hida-Rankin $p$-adic L-function $L_p(\phi_\lambda^p,\xi,\underline{k})$ is to compute the image through $\varphi_\xi$ of a family that interpolates the measures $\mu_{\phi^p_\lambda}$ (see a similar construction by Bergdall-Hansen in \cite{BH}). This will produce an element in $\Lambda_F$ whose classical specializations are values at $\xi$ of anticyclotomic distributions associated with automorphic modular symbols of different weights.


In \S \ref{HidaFamilies} we will introduce our concept of family of overconvergent modular symbols $\Phi_\lambda^p$. Since we have assumed that our setting is ordinary, $\Phi_\lambda^p$ will be a cohomology class of $\Lambda_F$-valued measures analogous to classical Hida families. Although our concept of family differs from the classical one treated for instance in \cite{BDJ} using local systems, we will prove in \S \ref{locsystevsgroupcoho} that both concepts coincide. The analysis of \S \ref{locsystevsgroupcoho} together with the results of \cite{BDJ} will ensure the existence of the Hida family $\Phi_\lambda^p$ passing through $\hat\phi_\lambda^p$. Given such a $\Phi_\lambda^p$, one can naturally construct a $\Lambda_F$-valued measure $\mu_{\Phi_\lambda^p}$ using our formalism. We may think of the measure $\mu_{\Phi_\lambda^p}$ as a $p$-adic L-function 
 with variables of type weight and level passing through $\mu_{\phi_\lambda^p}$. We define $L_p(\phi_\lambda^p,\xi,{\bf k}_p)$ as the restriction of $\mu_{\Phi_\lambda^p}$ to the weight space, more precisely, 
\[
L_p(\phi_\lambda^p,\xi,{\bf k}_p)=\varphi_\xi(\mu_{\Phi_\lambda^p})= \int_{\mcG_T}\xi(\gamma)d\mu_{\Phi_\lambda^p}(\gamma)\in \Lambda_F\oti_\Z\bar\Q.
\]
The specialization at parallel weight 2 provides a morphism
\[
s_1\colon\Lambda_F\longrightarrow\Z_p,
\]
Thus, the evaluation of $L_p(\phi_\lambda^p,\xi,{\bf k}_p)$ at weight 2 is given by its image through $s_1$. The $p$-adic L-function has a zero of order at least $r$ at weight 2 if $L_p(\phi_\lambda^p,\xi,{\bf k}_p)\in I^{r}\oti_\Z\bar\Q$, where $I=\ker s_1$ is the corresponding augmentation ideal. If this is the case, the value of its $r$-th derivative is given by the image of $L_p(\phi_\lambda^p,\xi,{\bf k}_p)$ in $I^{r}\oti_\Z\bar\Q/I^{r+1}\oti_\Z\bar\Q$. The main result of the paper computes such higher derivatives in terms of plectic points (see theorem \ref{mainTHM}):
\begin{theorem}\label{mainTHMintro}
Assume that for all primes above $p$ the curve $E$ has either ordinary good or multiplicative reduction and let $\phi_\lambda^p$ be a modular symbol associated with $E$.
Let $S_0$ be the set of places $\mfp$ above $p$ where $E$ has multiplicative reduction and $\mfp$ does not split in $K$, and assume that $r=\#S_0\neq 0$.
Then $L_p(\phi_\lambda^p,\xi,{\bf k}_p)$ has a zero of order at least $r$ at weight 2
and there exists an explicit non-zero constant $C_T$ depending on $T(F)$ such that 
\[
L_p(\phi^p_\lambda,\xi,{\bf k}_p)\equiv C_T\cdot\epsilon^{S_0}(\pi,\xi)^{\frac{1}{2}}\cdot\ell_{{\bf a}_{S_0}}\circ{\rm Tr}(P_\xi^{S_0})\;\;({\rm mod}\;I^{r+1}\oti_\Z\bar\Q),
\]
where ${\rm Tr}:\bigotimes_{\mfp\in S_0}\hat E(K_\mfp)\rightarrow \bigotimes_{\mfp\in S_0}\hat E(F_\mfp)$ is a natural trace map provided by $P\mapsto \sum_{\sigma\in \Gal(K_\mfp/F_\mfp)} P^\sigma$, $\hat E(F_\mfp)= E(F_\mfp)\oti_\Z\Z_p$ and $\epsilon^{S_0}(\pi,\xi)$ is a product of explicit Euler factors at primes dividing $p$ outside $S_0$.
\end{theorem}
The natural morphism $\ell_{{\bf a}_{S_0}}:\bigotimes_{\mfp\in S_0}\hat E(F_\mfp)\rightarrow I^r$ will be explained in \S \ref{Linv} and it can be thought as the tensor product of the classical $p$-adic logarithms of the groups $E(F_\mfp)$. 

Very recently, results of \cite{fornea2021plectic} on the higher derivatives of $\mu_{\phi_\lambda^p}$ have been used in \cite{forneagehrmann2} to prove algebraicity of plectic points in some particular cases. More concretely, they prove that the minus part for the action of partial Frobenii of a plectic point $P_\xi^S$ is algebraic, in some specific situations where $P_\xi^{S_0}$ can be compared with Heegner points. In addition to the aforementioned general strategy of generalizing \cite{BDRSV}, Theorem \ref{mainTHMintro} opens the door to prove similar results for the corresponding plus part ${\rm Tr}(P_\xi^{S_0})$ of $P_\xi^{S_0}$, providing more evidence on the conjectures that predict algebraicity of general plectic points.

\subsection{On the fact that the base field $F$ is totally real}

In many of the parts of this work we can avoid the assumption that the base field $F$ is totally real. Indeed, if $F$ is any arbitrary number field we can define the cohomology classes $\phi_\lambda^p$, $\hat\phi_\lambda^p$ and $\Phi_\lambda^p$, moreover, our formalism allows us to construct the corresponding distributions $\mu_{\phi_\lambda^p}$ and $\mu_{\Phi_\lambda^p}$. Furthermore, the results of \cite{preprintsanti2} and \cite{preprintsanti3} used in \S \ref{section:IntProp} are also available in this situation, hence the distributions $\mu_{\phi_\lambda^p}$ have good interpolation properties. 

The main issue with the non-totally real situation is  
the fact that, given an overconvergent $\hat\phi_\lambda^p$ (extending $\phi_\lambda^p$), it is much harder to construct a Hida family $\Phi_\lambda^p$ passing through $\hat\phi_\lambda^p$. This also has implications in the theory of plectic points over $F$ since, although we can construct points on certain $p$-adic Tate curves, we cannot guarantee that such a curve is isogenous to $E$ (see \S \ref{Linv}).


\subsection*{Acknowledgements.}
This project has received funding from the European Research Council
(ERC) under the European Union's Horizon 2020 research and innovation
programme (grant agreement No. 682152). Moreover, the second author has been partially funded by the project PID2021-124613OB-I00 from Ministerio de Ciencia e innovaci\'on. We would also like to thank Lennart Gehrmann for all his helpful remarks and comments.

\tikzcdset{arrow style=tikz, diagrams={>=stealth'}}
\section{Setup and notation}\label{section:setup}
For any number field or non-archimedean local field $L$ we will write $\mfo_L$ for its ring of integers. Sometimes we denote 
$\mfo_{\ovl\Q_p}$ by $\ovl\Z_p$. 

For any pair of topological groups $H_1\subset H_2$, a ring $R$ and a $R[H_1]$-module $M$, we will write
\begin{eqnarray*}
{\rm coInd}_{H_1}^{H_2}M&=&\left\{f:H_2\rightarrow M\;\mbox{locally constant, }f(h_1h_2)=h_1f(h_2),\;h_1\in H_1,\;h_2\in H_2\right\},\\
{\rm Ind}_{H_1}^{H_2}M&=&\left\{f\in {\rm Ind}_{H_1}^{H_2}M,\;f\mbox{ compactly supported modulo }H_1\right\},
\end{eqnarray*}
both endowed with the $H_2$-action given by $h_2 f(\cdot)=f(\cdot h_2)$, for every $h_2\in H_2$. If $M=R$ a topological ring and the action is given by a continuous character $\chi:H_1\rightarrow R$, we will also write by abuse of notation 
\[
{\rm coInd}_{H_1}^{H_2}(\chi)=\left\{f:H_2\rightarrow R\;\mbox{continous, }f(h_1h_2)=\chi(h_1)f(h_2),\;h_1\in H_1,\;h_2\in H_2\right\},
\]
and similarly for ${\rm Ind}_{H_1}^{H_2}(\chi)$. If $\chi$ is also locally constant, we will denote by ${\rm coInd}_{H_1}^{H_2}(\chi)^0$ and ${\rm Ind}_{H_1}^{H_2}(\chi)^0$ the subset of locally constant functions. Note that if $H_2/H_1$ is compact then ${\rm coInd}_{H_1}^{H_2}(\chi)^\ast={\rm Ind}_{H_1}^{H_2}(\chi)^\ast$, for $\ast=\emptyset,0$.

Let $F$ be a totally real number field of degree $d$.
We will often write $\iy$ for the set of infinite places of $F$ and $p$ for the set of places of $F$ above a rational prime $p$. 
Let $K/F$ be a quadratic extension, and let $\Sigma_{\tno{un}}(K/F)$ be the set of infinite places of $F$ that split at $K$.
If we write $u$ for the cardinality of $\Sigma_{\tno{un}}(K/F)$, 
it is clear that the set $\Sigma_\R^\C(K/F)=\infty\setminus\siun$ has cardinality $r_{K/F}=d-u$. 

For any finite set of places $S$ of $F$ we write $F_S=\prod_{v\in S} F_v$, and if the places of $S$ are finite, we write $\mfo_{F_S}=\prod_{v\in S} \mfo_{F_v}$. 
We denote by $\A_F$ the ring of adèles of $F$, and by $\A_F^S$ the ring of adèles outside $S$, namely, $\A_F^S=\A_F\cap\prod_{v\not\in S}F_v$. For a finite place $\mfp$, we will write $\varpi_\mfp$ for a fixed uniformizer, $v_\mfp:F_\mfp^\times\ra\Z$ for the $p$-adic valuation such that $v_\mfp(\varpi_\mfp)=1$,  and $q_\mfp$ for the cardinal of the residue field $\mfo_{F_\mfp}/\mfp$. Similarly, we write $v_p:\C_p^\ti\rightarrow\Q$ for the $p$-adic valuation such that $v_p(p)=1$, and we denote by $g_p$ the number of primes dividing $p$.

Let $B/F$ be a quaternion algebra for which there exists an embedding $\iota:K\hookrightarrow B$, that we fix now. Let $\Sigma_B$ be the set of archimedean classes of $F$ splitting the quaternion algebra $B$.
We can define an algebraic group $G$ associated to $B^\ti/F^\ti$ as follows:
$G$ represents the functor that sends any $\mfo_F$-algebra $R$ to
\[G(R)=(\mcO_B\oti_{\mfo_F} R)^\ti/R^\ti,\]
where $\mcO_B$ is a maximal order in $B$ that will be fixed throughout the paper.
Similarly, we define the algebraic group $T$ associated to $K^\ti/F^\ti$ by
\[T(R)=(\mfo_{c_0}\oti_{\mfo_F} R)^\ti/R^\ti,\]
where $\mfo_{c_0}:=\mcO_B\cap K$ is an order of conductor $c_0$ in $\mfo_K$ (namely, $\mfo_{c_0}=\mfo_F+c_0\mfo_K$).
Note that $T\subset G$.
We denote by $G(F_\iy)_+$ and $T(F_\iy)_+$ the connected component of the identity of $G(F_\iy)$ and $T(F_\iy)$, respectively.
We also define $T(F)_+:=T(F)\cap T(F_\iy)_+$ and $G(F)_+:=G(F)\cap G(F_\iy)_+$.
Throughout the paper we will fix a rational prime $p$ such that $G(F_p)\simeq\PGL_2(F_p)$.

For any number field $L$ and any place $v$, we choose the Haar measure $dx_v^\times$ for $L_v^\times$:
\[
d^\times x_v=\zeta_v(1)|x_v|_v^{-1}dx_v;\qquad\mbox{where}\quad \left\{\begin{array}{ll}
 dx_v\mbox{ is $[L_v:\R]$ times the usual Lebesgue measure,}    &v\mid\infty;  \\
   dx_v\mbox{ is the Haar measure of $F_v$ such that }{\rm vol}(\mfo_{F,v})=|d_{F_v}|_v^{1/2},  &v\nmid\infty, 
\end{array}\right.
\]
$d_{F_v}$ is the different of $L_v$, $\zeta_v(s)=(1-q_v^{-s})^{-1}$, if $v\nmid\infty$, $\zeta_v(s)=\pi^{-s/2}\Gamma(s/2)$, if $L_v=\R$, and $\zeta_v(s)=2(2\pi)^{-s}\Gamma(s)$, if $L_v=\C$. The product of such measures provides a Tamagawa measure $d^\times x$ on $\A_L^\times/L^\times$. If we choose $d^\times t$ to be the quotient measure for $T(\A_F)/T(F)=\A_K^\times/\A_F^\times K^\times$, one has that ${\rm vol}(T(\A_F)/T(F))=2L(1,\psi_K)$, where $\psi_K$ is the quadratic character associated with $K/F$.

Given two topological spaces $X$ and $Y$, we will denote by $C(X,Y)$ and $C^0(X,Y)$ the spaces of continuous and locally constant functions. Sometimes $C(X,Y)$ will be regarded as a topological space by means of the compact-open topology, namely, the spaces $V(K,U)=\{f\in C(X,Y);\;f\mid_K\in C(K,U)\}$ for $K\subseteq X$ compact and $U\subseteq Y$ open form a subbase. It is clear that $C^0(X,Y)\subseteq C(X,Y)$ is a dense subset. For both $\ast=\emptyset,0$, we will denote by $C_c^\ast(X,Y)\subseteq C^\ast(X,Y)$ the subset of compactly supported functions. For the case $X=T(\A_F)$, we have that  
\begin{equation}\label{eqdescribingCfc}
    C_{\rm fc}^0(T(\A_F),R):=\{f\in C^0(T(\A_F),R);\; f(x,\cdot)\in C_c^0(T(\A_F^\infty),R),\mbox{ for all }x\in T(F_\infty)\}\simeq \Ind_{T(F)_+}^{T(F)}C_c^0(T(\A_F^\infty),R),
\end{equation}
since $T(F_\infty)/T(F_\infty)_+\simeq T(F)/T(F)_+$.

Let $M$ be a $G(\A_F^S)$-representation over a field $L$, and let $\rho$ be an irreducible $G(\A_F^{S'})$-representation over $L$ with $S'\subseteq S$. We will write 
\[
M_\rho:=\Hom_{G(\A_F^S)}\left(\rho\mid_{G(\A_F^S)},M\right).
\]
as representations over $L$.

For any ring $R$ and a positive even integer $k$, let $\mcP(k)_R$ be the $R$-module of homogeneous polynomials in two variables with coefficients in $R$, together with the following $\GL_2(R)$-action;
\[\ipa{\gamma \cdot P}(x,y):=\ipa{\det \gamma}^{-k/2}\cdot P\ipa{(x,y)\cdot\bbm a & b \\ c & d \ebm};\qquad \mbox{for}\quad\gamma=\bbm a & b \\ c & d\ebm.\]
Note that the factor $\ipa{\det \gamma}^{-k/2}$ makes the central action trivial.

Denote by $V(k)_R$ the dual space $\Hom_R(\mcP(k)_R,R)$.
In the case $R=\C$ we set $V(k):=V(k)_\C$.
Given a vector $\udl k = (k_i) \in (2\N)^n$ we define
\[
V\ipa{\udl k}_R:= \bigotimes_{i=1}^n V(k_i)_R,\qquad \mcP(\udl k)_R:=\bigotimes_{i=1}^n\mcP(k_i)_R.
\]
Notice that the fixed embedding $\iota:K\hookrightarrow B$ provides an isomorphism $B\oti_FK\simeq{\rm M}_2(K)$, that we will also fix throughout the article. For a given $\sigma:F\hookrightarrow\R$, the composition $B\hookrightarrow B\oti_FK\simeq {\rm M}_2(K)$ together with a fixed extension $\sigma_K:K\hookrightarrow\C$ of $\sigma$, gives rise to an embedding $G(F_\sigma)\hookrightarrow\PGL_2(\C)$. This provides an action of $G(F_\iy)$ on $V(\udl k):=V(\udl k)_\C$ and $\mcP(\udl k):=\mcP(\udl k)_\C$, for any $\udl k=(k_{\sigma})\in (2\N)^d$ indexed by $\sigma\mid\iy$. The subspaces $V(\udl k)_{\bar\Q}$ and $\mcP(\udl k)_{\bar\Q}$ are preserved by the action of the subgroup $G(F)\subseteq G(F_\infty)$.

The morphism $K\stackrel{\iota}{\hookrightarrow}B\hookrightarrow B\oti_FK\simeq{\rm M}_2(K)$ maps $k$ to $\bsm k&\\&\bar k\esm$, where $(k\mapsto\bar k)\in \Gal(K/F)$ denotes the non-trivial automorphism. This implies that we have a $T(F_\iy)$-equivariant morphism
\begin{equation}\label{PtoCoverC}
    \mcP(\udl k)\longrightarrow C(T(F_\iy),\C);\qquad \bigotimes_{\sigma} P_{\sigma}\longmapsto\ipa{(t_\sigma)_{\sigma\mid\iy}\mapsto\prod_{\sigma\mid\infty}P_{\sigma}\ipa{1,\sigma_K\ipa{\frac{t_\sigma}{\bar t_\sigma}}}\sigma_K\ipa{\frac{t_\sigma}{\bar t_\sigma}}^{-\frac{k_{\sigma}}{2}}}.
\end{equation}

Note that, if we fix $\bar\Q\hookrightarrow \C_p$, we can associate to a prime $\mfp$ of $F$ above $p$ the set $\Sigma_\mfp$ of embeddings $\sigma: F\hookrightarrow \bar\Q$ such that $v_p\ipa{\bar\sigma(\varpi_\mfp)}>0$. Hence $\sigma\in\Sigma_\mfp$ can be seen as an embedding $\sigma:F_\mfp\hookrightarrow\ovl\Q_p$.
We have fixed a bijection between embeddings $\sigma:F\hookrightarrow\R$ and $\sigma:F_\mfp\hookrightarrow\C_p$ for all $\mfp\mid p$. Similarly, our choice of $\sigma_K$ induces an extension $\sigma_K:K_\mfp\rightarrow\bar\Q_p\subset\C_p$ of $\sigma$.
Thus, for any $\udl k\in (2\N)^d$ we have also a natural action of $G(F_p)$ on $V(\udl k)_{\ovl\Q_p}$ and $\mcP(\udl k)_{\ovl\Q_p}$, and an analogous $T(F_p)$-equivariant morphism 
\begin{equation}\label{PtoCoverQp}
    \mcP(\udl k)_{\ovl\Q_p}\longrightarrow C(T(F_p),\bar\Q_p);\qquad \bigotimes_{\sigma} P_{\sigma}\longmapsto\ipa{(t_\mfp)_{\mfp\mid p}\mapsto\prod_{\mfp\mid p}\prod_{\sigma\in\Sigma_\mfp}P_{\sigma}\ipa{1,\sigma_K\ipa{\frac{t_\mfp}{\bar t_\mfp}}}\sigma_K\ipa{\frac{t_\mfp}{\bar t_\mfp}}^{-\frac{k_{\sigma}}{2}}}.
\end{equation}

\section{Fundamental classes of tori}\label{section:fundcla}

In this section we will define certain fundamental classes associated with the torus $T$.

\subsection{The fundamental class $\eta$ of the torus}\label{section.fundamentalclass}

Let $\mcU=\prod_{\mfq}T(\mfo_{F_\mfq})$ and denote by ${\rm U}:=T(F)\cap \mcU$ the group of relative units. Similarly, we 
define ${\rm U}_+:={\rm U}\cap T(F_\iy)_+$ to be the group of totally positive relative units. By an straightforward argument using \emph{Dirichlet's Unit Theorem}
\[\tno{rank}_\Z {\rm U}=\tno{rank}_\Z {\rm U}_+=(2u+r_{K/F}-1)-(d-1)=u.\] We also define the class group
\[
\Cl(T)_+:=T(\A_{F})/\ipa{T(F)\cdot \mcU\cdot T(F_\iy)_+}\simeq T(\A_{F}^\iy)/\ipa{T(F)_+\cdot \mcU}.
\]

Recall that we have the usual homomorphism $T(F_\iy)_+\ra \R^u$ given by $z\mapsto (\log|\sigma z|)_{\sigma\in\siun}$.
Moreover, as in the proof of \emph{Dirichlet's Unit Theorem}, under this isomorphism the image of ${\rm U}_+$ is a $\Z$-lattice $\Lambda\subset \R^u$.
Thus, we can define a fundamental class $\xi\in H_u(\Lambda,\Z)$ by choosing a generator.

On the other hand, notice that the group $\Cl(T)_+$ fits in the following exact sequence
\begin{center}
\begin{tikzcd}
0 \ar[r] & T(F)_+/{\rm U}_{+} \ra T(\A_F^\iy)/\mcU \ar[r,"p"]  & \Cl(T)_+ \ar[r] & 0. \\[-2em]
\end{tikzcd}
\end{center}
We fix preimages $\ovl t_i\in T(\A_F^\iy)$ for every element $t_i\in \Cl(T)_+$ and we consider the compact set
\[\mcF=\bigcup_i \ovl{t_i}\mcU\subset T(\A_F^\iy).\]
Write $\eta$ for the cap product
\begin{equation}\label{def:fundamentalclass}
\eta=\indi_\mcF\cap \xi\in H_u({\rm U}_+,C(\mcF,\Z)),
\end{equation}
where $\indi_\mcF\in H^0({\rm U}_+,C(\mcF,\Z))$ is the indicator function and $\xi\in H_u({\rm U}_+,\Z)$ is the image of $\xi$ through the corestriction morphism once identified $\Lambda$ with a preimage in ${\rm U}_+$.
By \cite[lemma 3.3]{HerMol1}, we have  an isomorphism of $T(F)$-modules
$\Ind_{U_+}^{T(F)}(C(\mcF,\Z))\simeq C^0_{\rm fc}(T(\A_F),\Z)$. 
Thus, by Shapiro's lemma one may regard
\[\eta\in H_u(T(F),C^0_{\rm fc}(T(\A_F),\Z))\simeq H_u(T(F)_+,C^0(T(\A_F^\iy),\Z)).\]

\subsubsection{The $S$-fundamental classes $\eta^S$}\label{section.fundclS}

As shown in \cite{preprintsanti2}, by means of the fundamental class $\eta$ we can compute certain periods related with critical values of classical L-functions.
In this section we are going to define different types of fundamental classes $\eta^S$ that will be used to define plectic points. At the end of the section we will relate both fundamental classes.

Let $S$ be a set of places $\mfp$ of $F$ above $p$ where $T$ does not split. 
Similarly as above, we consider:
\begin{equation}\label{notation.letusrecall}
\begin{tabular}{cc}
$\mcF^S=\bigsqcup_i \ovl s_i \mcU^S, \quad$ & $\quad \mcU^S =\prod_{\mfq\not\in S} T(\mfo_{F_\mfq}),$ 
\end{tabular}
\end{equation}
where $\ovl s_i\in T(\A_F^{S\cup\iy})$ are representatives of the elements of $\Cl(T)_+^S$ and
\begin{align}\label{definition.cltplusmfp}
\Cl(T)_+^S=T(\A_F^{S\cup\iy})/\ipa{\mcU^S T(F)_+}.
\end{align}
Let us consider also the set of totally positive relative $S$-units ${\rm U}_+^S:=\mcU^S\cap T(F)_+$.
Note that we have an exact sequence
\begin{equation} \label{diagram.exact}
\begin{tikzcd}
0 \ar[r] & T(F_S)/{\rm U}^S_+T(\mfo_{F_S})  \ar[r] & \Cl(T)_+ \ar[r] & \Cl(T)_+^S \ar[r] & 0. \\[-2em]
\end{tikzcd}
\end{equation}

It is clear that the quotient $T(F_S)/T(\mfo_{F_S})$ is finite.
Moreover, we have the natural exact sequence
\begin{equation}\label{diagram.notexact}
\begin{tikzcd}
0 \ar[r] & {\rm U}_+ \ar[r] & {\rm U}_+^S \ar[r] & T(F_S)/T(\mfo_{F_S}). \\[-2em]
\end{tikzcd}
\end{equation}
This implies that the $\Z$-rank of ${\rm U}_+^S$ is $u$.
Hence, similarly as above, we consider the image $\xi^S\in H_{u}({\rm U}_+^S,\Z)$ of $\xi$ through the corestriction morphism.
We have as well
\[
C^0_c(T(\A_F^{S\cup\iy}),\Z)=\Ind_{{\rm U}_+^S}^{T(F)_+} C(\mcF^S,\Z).
\]
Thus, the cap product $\indi_{\mcF^S}\cap \xi^S$ provides an element
\begin{equation}\label{definition.fundamentalclassmfp}
\eta^S:=\indi_{\mcF^S}\cap \xi^S \in H_{u}({\rm U}_+^S,C(\mcF^S,\Z))=H_{u}(T(F)_+,C^0_c(T(\A_F^{S\cup\iy}),\Z)).
\end{equation}

The following result relates the previously defined fundamental classes:
\begin{lemma}\cite[lemma 3.4]{HerMol1}\label{lemma.onedotseven}
We have that
\[[{\rm U}_+^S:{\rm U}_+]\cdot \eta = \eta^S \cap \indi_{T(F_S)},\]
where $\indi_{T(F_S)}\in H^0(T(F),C_c^0(T(F_S),\Z))$ is the constant function.
\end{lemma}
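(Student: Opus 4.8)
The plan is to compare the two cap-product constructions by tracking how the fundamental class $\xi$ and the indicator functions behave under the relevant corestriction and induction maps. First I would recall that the construction of $\eta^S$ takes place at the level of $\mcO_+^S$-homology, and that of $\eta$ at the level of $\mcO_+$-homology, with $\mcO_+\subseteq\mcO_+^S$ a finite-index subgroup whose index is exactly the order of the image of $\mcO_+^S$ in the finite group $T(F_S)/T(\mfo_{F,S})$ appearing in the exact sequence \eqref{diagram.notexact}. The key algebraic input is the standard identity $\cor\circ\res = [\mcO_+^S:\mcO_+]$ on homology; combined with the fact that, by construction, $\xi^S = \cor(\xi)$ and $\xi$ itself is the corestriction of the lattice fundamental class, one reduces to a computation with indicator functions of compact opens.

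The main steps I would carry out in order: (1) unwind the identification $C^0_c(T(\A_F^{S\cup\iy}),\Z)=\Ind_{\mcO_+^S}^{T(F)_+} C(\mcF^S,\Z)$ together with $\Ind_{\mcO_+}^{T(F)_+}(C(\mcF,\Z))\simeq C^0_c(T(\A_F),\Z)$ (Lemma 3.3 of \cite{HerMol1}), so that both $\eta$ and $\eta^S$ can be viewed as classes in a common $H_u(T(F)_+,-)$ after induction; (2) observe that pulling back $\indi_{\mcF^S}$ along the inclusion $\mcF\hookrightarrow\mcF^S\times(\text{finite set})$ dictated by the exact sequence \eqref{diagram.exact} — which identifies $\Cl(T)_+$ as an extension of $\Cl(T)_+^S$ by $T(F_S)/\mcO_+^S T(\mfo_{F,S})$ — relates $\indi_{\mcF}$ to the product $\indi_{\mcF^S}\cdot\indi_{T(\mfo_{F,S})}$ up to the choice of coset representatives $\ovl t_i$ versus $\ovl s_i$; (3) use the compatibility of cap product with corestriction, namely $\cor(x\cap\res(c)) = \cor(x)\cap c$ for $x$ a homology class over the subgroup and $c$ a cohomology class over the big group, to push the cap product $\indi_{\mcF}\cap\xi$ up from $\mcO_+$-homology to $\mcO_+^S$-homology; the factor $[\mcO_+^S:\mcO_+]$ emerges precisely from $\cor\circ\res$ on the $\xi$ side; (4) finally identify the resulting cohomology class $\indi_{\mcF^S}\cdot\indi_{T(F_S)}$, after the Shapiro identifications, with $\eta^S\cap\indi_{T(F_S)}$, where $\indi_{T(F_S)}\in H^0(T(F),C_c^0(T(F_S),\Z))$ is viewed through restriction of scalars $T(\A_F)=T(F_S)\times T(\A_F^{S\cup\iy})$.

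The step I expect to be the main obstacle is (2)–(3): carefully matching the two systems of coset representatives $\{\ovl t_i\}$ for $\Cl(T)_+$ and $\{\ovl s_i\}$ for $\Cl(T)_+^S$ through the extension \eqref{diagram.exact}, and checking that the discrepancy is absorbed exactly into a translation action that leaves the homology class unchanged (since translation by $T(F)_+$ is trivial on $H_u(T(F)_+,-)$), so that no spurious constant beyond $[\mcO_+^S:\mcO_+]$ appears. Once the bookkeeping of representatives and the functoriality of cap product under $\cor/\res$ are pinned down, the identity falls out; the remaining verifications (finiteness of $T(F_S)/T(\mfo_{F,S})$, that $\mcO_+^S$ has $\Z$-rank $u$) are already recorded in the excerpt and can be cited directly.
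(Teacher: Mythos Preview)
The paper does not prove this lemma; it is stated without proof and attributed to \cite[Lemma 3.4]{HerMol1}. So there is no proof in the present paper to compare your proposal against.

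That said, your outline is the natural strategy and matches what one would expect the proof in \cite{HerMol1} to do: interpret both $\eta$ and $\eta^S$ via the Shapiro identifications $C^0_c(T(\A_F^{S\cup\iy}),\Z)\simeq\Ind_{\mcO_+^S}^{T(F)_+} C(\mcF^S,\Z)$ and $C^0_c(T(\A_F),\Z)\simeq\Ind_{\mcO_+}^{T(F)_+} C(\mcF,\Z)$, then use the projection formula $\cor(x\cap\res c)=\cor(x)\cap c$ together with $\xi^S=\cor_{\mcO_+}^{\mcO_+^S}(\xi)$ and $\cor\circ\res=[\mcO_+^S:\mcO_+]$ to produce the index factor. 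Your identification of step (2)--(3) as the delicate point is accurate: one must choose the representatives $\ovl t_i$ for $\Cl(T)_+$ compatibly with the $\ovl s_i$ for $\Cl(T)_+^S$ via the exact sequence \eqref{diagram.exact}, so that $\mcF$ decomposes as a disjoint union of translates of sets of the form $\ovl s_j U^S\times(\text{cosets in }T(\mfo_{F,S}))$, and then check that capping with $\indi_{T(F_S)}$ (rather than $\indi_{T(\mfo_{F,S})}$) picks up exactly the orbit sum that $\cor$ produces. There is no missing idea in your sketch; carrying it out is a matter of unwinding the definitions carefully.
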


\section{Cohomology classes attached to automorphic forms}\label{GenAutforms}

In this section we will describe cohomology classes associated with automorphic forms by means of the Eichler-Shimura map.


Let $H\subset G(F)$ be a subgroup,
$R$ a ring,
and $S$ a finite set of places of $F$ above $p$.
The subgroup $H$ will usually be $G(F), G(F)_+, T(F)$ or $T(F)_+$.
For any $R[H]$-module $N$ let
\begin{equation}\label{definition.automorphicforms}
\mcA^{S\cup\iy}(N):=\icla{\phi: G(\A^{S\cup\iy}_F)\ra N,\tno{ \begin{tabular}{c} there exists an open compact\\ subgroup $U\subset G(\A_F^{S\cup\iy})$\\ with $\phi(\cdot U)=\phi(\cdot)$ \end{tabular}}}.
\end{equation}
For any pair of $R[H]$-modules $M$ and $N$ we will also write $\mcA^{S\cup\iy}(M,N):=\mcA^{S\cup\iy}(\Hom_R(M,N))$.
Then the space $\mcA^{S\cup\iy}(N)$ has a natural action of $H$ and $G(\A_F^{S\cup\iy})$, namely 
\[(h\phi)(x):=h\cdot \phi(h\inv x),\qquad (y\phi)(x):=\phi(xy),\]
where  $h \in H$ and $x,y \in G(\A_F^{S\cup\iy})$. For an open compact subgroup $U\subset G(\A_F^{S\cup\iy})$ we denote the subspace of $U$-invariant functions by $\mcA^{S\cup\iy}(\Hom_R(M,N))^U$. We define the cohomology space
\[
H_\ast^r(H,\mcA^{S\cup\iy}(N)):=\varinjlim_U H^r(H,\mcA^{S\cup\iy}(N)^U), \qquad U\subset G(\A_F^{S\cup\iy})\;\mbox{ open compact subgroup.}
\]
\begin{remark}\label{remonadmrepcoho}
    Note that $H_\ast^r(H,\mcA^{S\cup\iy}(N))$ is an admissible $G(\A_F^{S\cup\infty})$-representation. Moreover, for a given $U\subset G(\A_F^{S\cup\iy})$ as above, ${\rm Pic}(U):=G(F)_+\backslash G(\A^{S\cup\infty})\slash U$ is finite. Hence, for a set of representatives $\{g_1,\cdots,g_r\}$ of ${\rm Pic}(U)$,
    \[
    \mcA^{S\cup\iy}(N)^U=
    \bigoplus_{i=1}^r{\rm coInd}_{\Gamma_{g_i}}^{G(F)_+}(N);\qquad \Gamma_{g_i}=G(F)_+\cap g_iUg_i^{-1}.
    \]
    This implies that 
    $H^r(G(F)_+,\mcA^{S\cup\iy}(N)^U)=\bigoplus_{i=1}^r H^r(\Gamma_{g_i},N)$. The groups $\Gamma_{g_i}$ are $S$-arithmetic, hence of type (VFL). By \cite[remarque p.101]{Ser71}:
    \begin{itemize}
    \item The $R$-module $H^r(G(F)_+,\mcA^{S\cup\iy}(N)^U)$ is finitely generated, if $N$ is a finitely generated module over $R$ noetherian. 
    \item The functor $N\mapsto H_\ast^r(G(F)_+,\mcA^{S\cup\iy}(N))$ commutes with direct limits. Since any flat module is a direct limit of free modules of finite rank, this implies that the natural morphism
    \[
    H_\ast^r(G(F)_+,\mcA^{S\cup\iy}(N))\otimes_RM\longrightarrow H_\ast^r(G(F)_+,\mcA^{S\cup\iy}(N\oti_RM))
    \]
    is an isomorphism for any flat $R$-module $M$. 
    \end{itemize}
\end{remark}

\subsection{Automorphic cohomology classes}\label{section:autcla}

Let $E/F$ be a modular elliptic curve. Hence, attached to $E$, we have an automorphic form for $\PGL_2/F$ of parallel weight 2. Let us assume that it admits a Jacquet-Langlands lift to $G$, and denote by $\pi$ the corresponding automorphic representation. Let $s:=\#\Sigma_B$. As shown in \cite{ESsanti}, once fixed a character $\lambda:G(F)/G(F)_+\rightarrow\pm 1$, the image through the Harder-Eichler-Shimura isomorphism of any element of such Jacquet-Langlands lift provides a cohomology class in
$H_\ast^s(G(F)_+,\mcA^\iy(\C))^\lambda$,
where the super-index $\lambda$ stands for the subspace where the natural action of $G(F)/G(F)_+$ on the cohomology groups is given by the character $\lambda$. Moreover, since the coefficient ring of the automorphic forms is $\Z$, by remark \ref{remonadmrepcoho}, such a class is the extension of scalars of a class
\[
\phi_\lambda\in H_\ast^s(G(F)_+,\mcA^\iy(\Z))^\lambda.
\]
For general automorphic forms of arbitrary even weight $(\udl k+2)\in (2\N)^d$, the Eichler-Shimura morphism provides a class  
\[
\phi_\lambda\in H_\ast^s(G(F)_+,\mcA^\iy(V(\udl k)_{\ovl\Q}))^\lambda.
\]
The $G(\A_F^\iy)$-representation $\rho$ over $\ovl\Q$ generated by $\phi_\lambda$ satisfies $\rho\otimes_{\ovl\Q}\C\simeq\pi^\iy:=\pi\mid_{G(\A_F^\iy)}$. This implies that $\phi_\lambda$ defines an element 
\[
\varphi_\lambda\in H_\ast^s(G(F)_+,\mcA^\iy(V(\udl k)_{\ovl\Q}))_{\rho}^\lambda.
\]
Notice that $\varphi_\lambda$ can also be seen as an element of $H_\ast^s(G(F)_+,\mcA^\iy(V(\udl k)))_{\pi^\iy}^\lambda$.
\begin{remark}\label{remonAiyS}
By \cite[remark 2.1]{guitart2017automorphic}, for any set $S$ of places above $p$ and any $R[G(F_S)]$-module $M_S$ we have
\[
\Hom_{G(F_S)}\ipa{M_S,\mcA^\iy(V(\udl k)_R)}=\mcA^{S\cup\iy}(M_S,V(\udl k)_R).
\]
In particular, for any compact subgroup $C_S\subset G(F_S)$,
\[
\mcA^\iy(V(\udl k)_R)^{C_S}=\Hom_{G(F_S)}\ipa{{\rm Ind}_{C_S}^{G(F_S)}1,\mcA^\iy(V(\udl k)_R)}=\mcA^{S\cup\iy}\ipa{{\rm Ind}_{C_S}^{G(F_S)}1,V(\udl k)_R}=\mcA^{S\cup\iy}\ipa{{\rm coInd}_{C_S}^{G(F_S)}(V(\udl k)_R)},
\]
by Frobenius reciprocity, where the last equation follows from the $G(F_S)$-equivariant isomorphism
\[
\Hom_R\ipa{{\rm Ind}_{C_S}^{G(F_S)}1,V(\udl k)_R}\stackrel{\simeq}{\longrightarrow}{\rm coInd}_{C_S}^{G(F_S)}(V(\udl k)_R);\qquad \varphi\longmapsto\ipa{g\mapsto g\varphi(g\inv\indi_{C_S})}.
\]
\end{remark}

Write $V_S:=\rho\mid_{G(F_S)}$, with $V_S=\bigotimes_{\mfp\in S}V_\mfp$, and for any ring $R$ we denote by $V_S^R=\bigotimes V_\mfp^R$ the $R[G(F_S)]$-module generated by $\phi_\lambda$. By the above remark and \cite[proposition 1.4]{L-P} we have that 
\begin{equation}\label{autPhi}
\varphi_\lambda\in H_\ast^s(G(F)_+,\mcA^\iy(V(\udl k)_{\ovl\Q}))_{\rho}^\lambda= H_\ast^s(G(F)_+,\mcA^{S\cup \iy}(V_S,V(\udl k)_{\ovl\Q}))_{\rho}^\lambda.
\end{equation}
For any $x^S\in \rho^{S}:=\rho\mid_{G(\A_F^{S\cup\iy})}$, 
the image $\varphi_\lambda(x^S)$ defines an element 
\[
\phi_\lambda^S\in H^s_\ast(G(F)_+,\mcA^{S\cup\iy}(V_S,V(\udl k)_{\ovl \Q}))^\lambda.
\]
We will usually treat $\phi_\lambda^S$ as an element of the cohomology group $H^s_\ast(G(F)_+,\mcA^{S\cup\iy}(V_S^L,V(\udl k)_L))^\lambda$, for $L=\C$, $\ovl\Q_p$ or $\C_p$, since by remark \ref{remonadmrepcoho} and \cite[proposition 1.4]{L-P}
\begin{eqnarray*}
\left(H_\ast^s(G(F)_+,\mcA^{S\cup\iy}(V_S,V(\udl k)_{\ovl \Q}))\otimes_{\ovl\Q}{L}\right)_{\rho^S\otimes L}&=&\left(\Hom_{G(F_S)}(V_S,H_\ast^s(G(F)_+,\mcA^{\iy}(V(\udl k)_{\ovl \Q})))\otimes_{\ovl\Q}{L}\right)_{\rho^S\otimes L}\\
&=&\Hom_{G(F_S)}(V_S^L,H_\ast^s(G(F)_+,\mcA^{\iy}(V(\udl k)_{L})))_{\rho^S\otimes L}\\
&=&H_\ast^s(G(F)_+,\mcA^{S\cup\iy}(V_S^L,V(\udl k)_L))_{\rho^S\otimes L}.
\end{eqnarray*}
The classes $\phi_\lambda^S$ are essential in our construction of anticyclotomic $p$-adic L-functions, and plectic points. 


\subsection{Distributions}\label{ss:distributions}
Assume that $G(F_p)\simeq\PGL_2(F_p)$. For any set $S$ of primes above $p$, let us consider 
\[
\mcL_\mfp
:=\left\{(x,y)\in\mfo_{F_\mfp}\times\mfo_{F_\mfp};\;(x,y)\not \in\mfp\times\mfp\right\},\qquad \mcL_S:=
\prod_{\mfp\in S} \mcL_\mfp.
\]

For any complete $\Z_p$-algebra $R$, and any continuous character
$\chi_S:\mfo_{F_S}^\ti\rightarrow R^\ti$,
let us consider the space of homogeneous functions
\[
C_{\chi_S}(\mcL_S,R)=\left\{f:\mcL_S\rightarrow R,\;\mbox{continuous s.t.}\; f(ax,ay)=\chi_S(a)\cdot f(x,y),\;\tno{ for }a\in\mfo_{F_S}^\times\right\}.
\]
Notice that, if $\chi_S=\prod_{\mfp\in S}\chi_\mfp$ where $\chi_\mfp:\mfo_{F_\mfp}^\ti\rightarrow R^\times$, we have a natural injective morphism
\begin{equation}\label{relPp}
    \bigotimes_{\mfp\in S}C_{\chi_\mfp}(\mcL_\mfp,R)\longrightarrow C_{\chi_S}(\mcL_S,R)
\end{equation}

We write $\D_{\chi_S}(R)$ for the continuous $R$-dual space of $C_{\chi_S}(\mcL_S,R)$, namely,
\[
\D_{\chi_S}(R):=\Hom_{\rm cont}(C_{\chi_S}(\mcL_S,R),R).
\]

For any continuous extension $\hat\chi_S$ of $\chi_S$, namely, a continuous character $\hat\chi_S:F_{S}^\ti\rightarrow R^\ti$ such that $\hat\chi_S\mid_{\mfo_{F_S}^\ti}=\chi_S$, we can consider the induced representation
\[
{\rm Ind}_P^G(\hat\chi_S)=\left\{f:G(F_S)\rightarrow R,\;\mbox{continuous },f\left(\bbm x_1 & y \\ & x_2 \ebm g \right)=\hat\chi_S\left(\frac{x_1}{x_2}\right)\cdot f(g)\right\},
\]
Notice that a choice of the extension $\hat\chi_S$ 
provides a natural $G(F_S)$-action on $C_{\chi_S^{-2}}(\mcL_S,R)$ since, by the Iwasawa decomposition, we have an isomorphism
\begin{eqnarray}
&&\psi_S:C_{\chi_S^{-2}}(\mcL_S,R)\rightarrow {\rm Ind}_P^G(\hat\chi_S),\\\label{eq:varphi}
&&\psi_S(f)\left(\bbm x_1 & y \\ & x_2 \ebm k \right)=\hat\chi_S\left(\frac{x_1}{x_2}\right)\cdot f(c,d)\cdot \chi_S(\det(k)),\qquad k=\bbm a&b\\c&d\ebm\in \GL_2(\mfo_{F_S}).
\end{eqnarray}
This gives rise to a well defined $G(F_S)$-actions on $C_{\chi_S^{-2}}(\mcL_S,R)$ and $\D_{\chi_S^{-2}}(R)$ depending on the extension $\hat\chi_S$.
\begin{definition}
To define an extension $\hat\chi_S:F_S^\ti\rightarrow R^\ti$ of $\chi_S$ amounts to choosing a tuple $\underline{\alpha}=(\alpha_\mfp)_{\mfp\in S}$, where $\alpha_\mfp\in R^\ti$. Indeed, the extension depends on a choice $\alpha_\mfp=\hat\chi_S(\varpi_\mfp)\inv\in R^\ti$, for the fixed uniformizers $\varpi_\mfp$. 
We will denote by 
$\D_{\chi_S^{-2}}(R)_{\underline{\alpha}}$ and $C_{\chi_S^{-2}}(\mcL_S,R)_{\udl\alpha}$
the spaces $\D_{\chi_S^{-2}}(R)$ and $C_{\chi_S^{-2}}(\mcL_S,R)$ endowed with the actions of $G(F_S)$ provided by the corresponding extension. 
\end{definition}

\begin{remark}
Given the extension $\hat\chi_S:F_S^\ti\rightarrow R^\ti$, we can directly describe the action of $g\in G(F_S)$ on $f\in C_{\chi_S^{-2}}(\mcL_S,R)_{\udl\alpha}$ compatible with $\psi_S$. Indeed, for $(c,d)\in\mcL_S$,
\begin{equation}\label{actiononC}
    (gf)(c,d)=\hat\chi_S(x)^{-2}\cdot\hat\chi_S(\det g)\cdot f(x\inv(c,d)g),
\end{equation}
where $x\in F_S^\ti$ is such that $x\inv(c,d)g\in \mcL_S$.
\end{remark}

Assume that $R\subseteq\C_p$ and $\chi_S(a)=a^{-\frac{\udl k}{2}}\chi_S^0(a)$, for some locally constant character $\chi_S^0$ and some $\udl k\in 2\N^{\Sigma_S}$ with $\Sigma_S=\bigcup_{\mfp\in S}\Sigma_\mfp$. Then we can consider the subspace $C_{\chi_S^{-2}}^{\udl k}(\mcL_S,R)$ of locally polynomial functions of homogeneous degree $\udl k$. If $\chi_S=\chi_S^0$ then the subspace is $C_{\chi_S^{-2}}^{0}(\mcL_S,R)$ the set of locally constant functions. For any extension $\hat\chi_S$ as above, we define 
\[
{\rm Ind}_P^G(\hat\chi_S)^{\ast}:=\psi_S\ipa{C_{\chi_S^{-2}}^{\ast}(\mcL_S,R)},\qquad\mbox{ where }\quad \ast=0,\udl k.
\]
We also write $\D^*_{\chi_S^{-2}}(R)$ for the dual space of $C^{*}_{\chi_S^{-2}}(\mcL_p,R)$, where $\ast=\udl k, 0$. 
Notice that, if $R=L$ a field of characteristic 0, 
we have $G(F_S)$-equivariant isomorphisms 
\begin{eqnarray}
\kappa:{\rm Ind}_P^G(\hat\chi_S^0)^{0}\otimes_L \mcP(\udl k)_L&\stackrel{\simeq}{\longrightarrow}&{\rm Ind}_P^G(\hat\chi_S)^{\udl k},\qquad \kappa\ipa{f\otimes P}\bbm a&b\\c&d\ebm=f\bbm a&b\\c&d\ebm\cdot P(c,d)\cdot \ipa{ad-bc}^{-\frac{\udl k}{2}},\\
\kappa^*:\D^{\udl k}_{\chi_S^{-2}}(L)_{\udl \alpha}&\stackrel{\simeq}{\longrightarrow}&\Hom_L\ipa{{\rm Ind}_P^G(\hat\chi_S^0)^{0}, V(\udl k)_L},\label{defkappa}
\end{eqnarray}
where $\udl\alpha=(\alpha_\mfp)_{\mfp\in S}$ with $\alpha_\mfp=\hat\chi_S^0(\varpi_\mfp)\inv\varpi_\mfp^{\frac{\udl k}{2}}$.

\begin{lemma}\label{lemmadmOC}
If we assume that $\alpha_\mfp\in\mfo_{F_\mfp}^\ti$ for all $\mfp\in S$, then any $\mu\in \D^{\udl k}_{\chi_S^{-2}}(\mfo_{\C_p})_{\udl \alpha}\oti_{\mfo_{\C_p}}\C_p$ lifts to a unique continuous distribution $\mu \in \D_{\chi_S^{-2}}(\C_p)_{\udl \alpha}$.
\end{lemma}
\begin{proof}
Notice that the space $\D^{\udl k}_{\chi_S^{-2}}(\mfo_{\C_p})_{\udl \alpha}\oti_{\mfo_{\C_p}}\C_p$ corresponds to the space of bounded distributions, hence continuous. Since $C^{\udl k}_{\chi_S^{-2}}(\mcL_p,\C_p)=C^{\udl k}_{\chi_S^{-2}}(\mcL_S,\mfo_{\C_p})\oti_{\mfo_{\C_p}}\C_p$ is dense in $C_{\chi_S^{-2}}(\mcL_S,\C_p)$, the result follows.
\end{proof}

\subsection{Functions in $C(\mfo_{F_p},R)$ versus functions in $C_{\chi_p^{-2}}(\mcL_p,R)$}

Given a complete $\Z_p$-algebra $R$ and a set $S$ of primes above $p$, write $C(\mfo_{F_S},R)$ and $D(\mfo_{F_S},R)$ for the set of $R$-valued continuous functions of $\mfo_{F_S}$ and its continuous dual, respectively. Write $\mcI_S\subset G(F_S)$ for the usual Iwahori subgroup
\[
\mcI_S=\prod_{\mfp\in S}\mcI_\mfp,\qquad \mcI_\mfp:=\PGL_2(\mfo_{F_\mfp})\cap\bbm\mfo_{F_\mfp}^\ti&\mfo_{F_\mfp}\\
\varpi_\mfp\mfo_{F_\mfp}&\mfo_{F_\mfp}^\ti\ebm/\mfo^\ti_{F_p}.
\]
Given a continuous character $\chi_S:\mfo_{F_S}^\ti\rightarrow R^\ti$, one can define a $\mcI_S$-action on $C(\mfo_{F_S},R)$  by means of the formula
\begin{equation}\label{eqactIwah}
(if)(x)=f\ipa{\frac{b+dx}{a+cx}}\cdot\chi_S(\det i)\cdot\chi_S^{-2}(a+cx),\qquad i=\bsm a&b\\c&d\esm\in \mcI_S,\quad f\in C(\mfo_{F_S},R).
\end{equation}
This provides the usual action on $D(\mfo_{F_S},R)$ given by $(i\mu)(f)=\mu(i\inv f)$. 
Notice that such an action can be extended to the semigroup $\Xi_S\inv$ of inverses of
\[
\Xi_S:=\prod_{\mfp\in S}\Xi_\mfp,\qquad \Xi_\mfp:=\PGL_2(F_\mfp)\cap\bbm\mfo_{F_\mfp}^\ti&\mfo_{F_\mfp}\\
\varpi_\mfp\mfo_{F_\mfp}&\mfo_{F_\mfp}\ebm/\mfo^\ti_{F_p}.
\]
Indeed, if we write $\langle\alpha\rangle=\frac{\alpha}{\prod_{\mfp\in S}\varpi_\mfp^{v_\mfp(\alpha)}}\in\mfo_{F_S}^\ti$ for any $\alpha\in F_S^\ti$, the action of $\Xi_S$ on $C(\mfo_{F_S},R)$ is given by
\[
(gf)(x)=f\ipa{\frac{b+dx}{a+cx}}\cdot\chi_S\ipa{\langle\det g\rangle}\cdot\chi_S^{-2}(a+cx),\qquad g=\bsm a&b\\c&d\esm\in \Xi_S.
\]
\begin{remark}\label{equivmorpCC}
Notice that, with respect to the above action, we have a $\mcI_S$-equivariant morphism
\[
C(\mfo_{F_S},R)\longrightarrow C_{\chi_S^{-2}}(\mcL_S,R);\qquad f\longmapsto \hat f(x,y)=\chi_S(x)^{-2}\cdot f\left(\frac{y}{x}\right)\cdot \indi_{\mfo_{F_S}^\ti\ti\mfo_{F_S}}(x,y).
\]
In fact, given $g=\bsm a&b\\c&d\esm\in\Xi_S$
\begin{eqnarray*}
g\inv\widehat{gf}(x,y)
&=&\hat\chi_p(\alpha)^{-2}\cdot\hat\chi_S\inv(\det g)\cdot \widehat{gf}\ipa{\frac{1}{\alpha\det(g)}(dx-cy,ay-bx)},
\end{eqnarray*}
by \eqref{actiononC}, where $\alpha\in F_S^\ti$ is such that $\alpha\inv(x,y)g\inv\in \mcL_S$. Notice that, since $a\in\mfo_{F_S}^\ti$ and $c\in \prod_{\mfp\in S}\varpi_\mfp$, a necessary condition for $(dx-cy,ay-bx)$ being in  $F_S^\ti(\mfo_{F_S}^\ti\ti\mfo_{F_S})$ is $x\in\mfo_{F_S}^\ti$. Since the support of $\widehat{gf}$ is precisely $\mfo_{F_S}^\ti\ti\mfo_{F_S}$, we conclude 
\[
x\in\mfo_{F_S}^\ti,\qquad
v_\mfp\ipa{\frac{y}{x}-\frac{b}{a}}=v_\mfp(ay-bx)\geq v_\mfp(dx-cy)=v_\mfp\ipa{d-c\frac{y}{x}}=v_\mfp\ipa{\frac{\det(g)}{a}+c\ipa{\frac{b}{a}-\frac{y}{x}}},\quad \mfp\in S.
\]
Thus, $\frac{y}{x}\in \frac{b}{a}+\det(g)\mfo_{F_S}$ and $\alpha=1$. Hence, if we write $U(g):=\frac{b}{a}+\det(g)\mfo_{F_S}$, we compute,
\[
g\inv\widehat{gf}(x,y)
=\hat\chi_S\inv(\det g)\cdot\chi_S^{-2}\ipa{\frac{dx-cy}{\det(g)}}\cdot (gf)\ipa{\frac{ay-bx}{dx-cy}}\cdot\indi_{U(g)}\ipa{\frac{y}{x}}=\frac{\chi_S\ipa{\langle\det g\rangle}}{\hat\chi_S(\det g)}\cdot\widehat{f\cdot\indi_{U(g)}}(x,y).
\]
\end{remark}

For any ${\udl N}=(N_\mfp)_{\mfp\in S}\in\N^{S}$ and $\beta=(\beta_\mfp)_{\mfp\in S}\in\mfo_{F_S}$, we write $\beta+p^{\udl N}\mfo_{F_S}=\prod_{\mfp\in S}(\beta_\mfp+\mfp^{N_\mfp})\subseteq\mfo_{F_S}$. For any $R\subseteq\C_p$ and ${\udl k}\in\N^{\Sigma_S}$, let 
$C^{\udl k}_{\udl N}(\mfo_{F_S},R)\subset C(\mfo_{F_S},R)$ be the submodule of functions that are polynomical of degree less than $\udl k$ when restricted to $\beta+p^{\udl N}\mfo_{F_S}$, for all $\beta\in\mfo_{F_S}$. Notice that, if $\chi_S(a)=a^{-\frac{\udl k}{2}}\chi_S^0(a)$ and ${\udl N}$ is bigger or equal than the conductor of $\chi_S^0$, the $R$-free submodule $C_{\udl N}^{\udl k}(\mfo_{F_S},R)$ is $\mcI_S$ and $\Xi_S$-invariant. 
We write  $D^{\udl k}_{\udl N}(\mfo_{F_S},R)$ for the dual of $C^{\udl k}_{\udl N}(\mfo_{F_S},R)$ endowed with the above actions of $\mcI_S$ and $\Xi_S^{-1}$. We can also consider $C^{\udl k}(\mfo_{F_S},R):=\varinjlim_{{\udl N}} C^{\udl k}_{\udl N}(\mfo_{F_S},R)$ and $D^{\udl k}(\mfo_{F_S},R):=\varprojlim_{{\udl N}} D^{\udl k}_{\udl N}(\mfo_{F_S},R)$.

\begin{remark}\label{remarkHecke}
For any $a\in\mfo_{F_\mfp}$ we write $g_a=\bsm1&a\\&\varpi_\mfp\esm\in\Xi_S$, and for any $\phi\in {\rm coInd}_{\mcI_S}^{G(F_S)}D(\mfo_{F_S},R)$, we define 
\[
U_\mfp \phi(g):=\sum_{i\in \mfo_{F_\mfp}/\mfp}g_i^{-1} \phi\ipa{g_ig},\qquad g\in G(F_S),\qquad\mfp\in S.
\]
Notice that $\mcI_\mfp\bsm1&\\&\varpi_\mfp\esm\mcI_\mfp=\sqcup_{a\in \mfo_{F_\mfp}/\mfp}\mcI_\mfp g_a$, hence we can write $U_\mfp\phi(g)=\mcI_\mfp\bsm1&\\&\varpi_\mfp^{-1}\esm\mcI_\mfp\phi\ipa{\mcI_\mfp\bsm1&\\&\varpi_\mfp\esm\mcI_\mfp g}$ and the above is indeed well defined. By the above observations, in case $R\subseteq\C_p$, it restricts to
\[
U_\mfp: {\rm coInd}_{\mcI_S}^{G(F_S)}D_{\udl N}^{\udl k}(\mfo_{F_S},R)\rightarrow {\rm coInd}_{\mcI_S}^{G(F_S)}D_{\udl N}^{\udl k}(\mfo_{F_S},R),\quad\mbox{ if }\;\chi_S(a)=a^{-\frac{\udl k}{2}}\chi_S^0(a)\;\mbox{ and ${\udl N}$ is the conductor of $\chi_S^0$}.
\]
Moreover, we have a natural $\mcI_S$-equivariant morphism 
\[
{\rm res}:\D_{\chi_S^{-2}}(R)_{{\underline{\alpha}}}\longrightarrow D(\mfo_{F_S},R);\qquad \int_{\mfo_{F_S}}f(z)d({\rm res}\mu)(z):=\int_{\mfo_{F_S}^\ti\ti\mfo_{F_S}}\hat f(x,y)d\mu(x,y),
\]
and one can check, using remark \ref{equivmorpCC}, that this induces a $G(F_S)$-equivariant morphism
\begin{equation}\label{eqvarphiUp}
    \varphi:\D_{\chi_S^{-2}}(R)_{{\underline{\alpha}}}\longrightarrow\bigcap_{\mfp\in S}\ker(U_\mfp-\alpha_\mfp)\subseteq {\rm coInd}_{\mcI_S}^{G(F_S)}D(\mfo_{F_S},R),\qquad \varphi(\mu)(g):={\rm res}(g\mu),\quad {\udl \alpha}=(\alpha_\mfp)_{\mfp\in S}. 
\end{equation} 
\end{remark}
\begin{proposition}\label{propDLDO}
For any complete $\Z_p$-algebra $R$, the following sequence
\begin{equation}\label{eSDDD1}
    0\longrightarrow \D_{\chi_S^{-2}}(R)_{{\underline{\alpha}}}\stackrel{\varphi}{\longrightarrow}{\rm coInd}_{\mcI_S}^{G(F_S)}D(\mfo_{F_S},R)\stackrel{\oplus_\mfp(U_\mfp-\alpha_\mfp)}{\longrightarrow}\bigoplus_{\mfp\in S}{\rm coInd}_{\mcI_S}^{G(F_S)}D(\mfo_{F_S},R)
\end{equation}
is exact. Moreover,
in the case $R=\mfo_{\C_p}$ or $\C_p$, $\chi_S(a)=a^{-\frac{\udl k}{2}}\chi_S^0(a)$, $\alpha_\mfp\in\mfo_{\C_p}^\ti$ and ${\udl N}$ is bigger or equal than the conductor of $\chi_S^0$, we also have an analogous exact sequence:
    \begin{equation}\label{eSDDD2}
0\longrightarrow \D^{\udl k}_{\chi_S^{-2}}(R)_{{\underline{\alpha}}}\stackrel{\varphi}{\longrightarrow}{\rm coInd}_{\mcI_S}^{G(F_S)}D_{\udl N}^{\udl k}(\mfo_{F_S},R)\stackrel{\oplus_\mfp(U_\mfp-\alpha_\mfp)}{\longrightarrow}\bigoplus_{\mfp\in S}{\rm coInd}_{\mcI_S}^{G(F_S)}D_{\udl N}^{\udl k}(\mfo_{F_S},R).
\end{equation}
\end{proposition}
\begin{proof}
This result is a dualized version of \cite[proposition 2.4]{KS} adapted to our settings.
Given ${\udl n}=(n_\mfp)_{\mfp\in S}\in \N^{S}$ and ${\udl b}=(b_\mfp)_{\mfp\in S}\in \PP^1(F_S)$, we write
\[
U_S({\udl b},{\udl n}):=\prod_{\mfp\in S}U_\mfp(b_\mfp,n_\mfp)\subset\mcL_S,\qquad U_\mfp(b_\mfp,n_\mfp)=\left\{\begin{array}{ll}\{(x,y)\in \mcL_\mfp;\;x\in\mfo_{F_\mfp}^\ti,\,yx\inv\equiv b_\mfp\;{\rm mod}\;\varpi_\mfp^{n_\mfp}\};&b\in \mfo_{F_\mfp},\\
\{(x,y)\in \mcL_\mfp;\;y\in\mfo_{F_\mfp}^\ti,\,xy\inv\equiv b_\mfp\inv\;{\rm mod}\;\varpi_\mfp^{n_\mfp}\};&b\not\in \mfo_{F_\mfp}.\end{array}\right.
\]
It turns out that any function in $C_{\chi_S^{-2}}(\mcL_S,R)$ with support $U_S({\udl b},{\udl n})$ must be of the form $f\left(\frac{\frac{y}{x}-{\udl b}}{p^{\udl n}}\right)\chi_S(x)^{-2}$, for some $f\in C(\mfo_{F_S},R)$, where
\[
\left.f\left(\frac{\frac{y}{x}-{\udl b}}{p^{\udl n}}\right)\chi_S(x)^{-2}\right|_{U_\mfp(b_\mfp,n_\mfp)}=\left\{\begin{array}{ll}
f\left(\frac{\frac{y}{x}-{b_\mfp}}{\varpi_\mfp^{n_\mfp}}\right)\chi_\mfp(x)^{-2};&b_\mfp\in\mfo_{F_\mfp},\\
f\left(\frac{\frac{x}{y}-{b_\mfp\inv}}{\varpi_\mfp^{n_\mfp}}\right)\chi_\mfp(y)^{-2};&b_\mfp\not\in\mfo_{F_\mfp}.
\end{array}\right.
\]
If we write ${\udl \alpha}^{-\udl n}:=\prod_{\mfp\in S}\alpha_\mfp^{-n_\mfp}$, then a  simple computation using equation \eqref{actiononC} shows that
\begin{equation}\label{eqaux4.8}
    {\udl \alpha}^{-\udl n}\varphi(\mu)(g_{\udl b,\udl n})(f)=\int_{U_S({\udl b},{\udl n})}f\left(\frac{\frac{y}{x}-{\udl b}}{p^{\udl n}}\right)\chi_S(x)^{-2}d\mu,\qquad g_{\udl b,\udl n}=(g_{b_\mfp,n_\mfp})_{\mfp\in S},\quad g_{b_\mfp,n_\mfp}=\left\{\begin{array}{cc}
    \bbm 1&b_\mfp\\&\varpi_\mfp^{n_\mfp}\ebm, & b_\mfp\in\mfo_{F_\mfp},  \\
    \bbm b_\mfp&1\\\varpi_\mfp^{n_\mfp}&\ebm, & b_\mfp\not\in\mfo_{F_\mfp}.
\end{array}\right.
\end{equation}
This implies that $\varphi$ in \eqref{eSDDD1} is injective, because the vanishing of $\varphi(\mu)$ would imply that $\mu$ annihilates all functions with support in $U_S({\udl b},{\udl n})$, and these open sets cover $\mcL_S$. Analogously, if $\chi_S(a)=a^{-\frac{\udl k}{2}}\chi_S^0(a)$, any function in $C_{\chi_S^{-2}}^{\udl k}(\mcL_S,R)$ is linear combination of functions $f\left(\frac{\frac{y}{x}-{\udl b}}{p^{\udl n}}\right)\chi_S(x)^{-2}$, for some ${\udl n}$ and some $f\in C_{\udl N}^{\udl k}(\mfo_{F_S},R)$. This shows that $\varphi$ in \eqref{eSDDD2} is also injective.

It is clear that ${\rm Im}(\varphi)\subseteq\cap_{\mfp\in S}\ker(U_\mfp-\alpha_\mfp)$ in both sequences \eqref{eSDDD1} and \eqref{eSDDD2}, hence it only remains to show that $\cap_{\mfp\in S}\ker(U_\mfp-\alpha_\mfp)\subseteq {\rm Im}(\varphi)$. Hence, we have to proof that given $\phi\in \cap_{\mfp\in S}\ker(U_\mfp-\alpha_\mfp)$ there exists $\mu_\phi \in \D^{\ast}_{\chi_S^{-2}}(R)_{{\underline{\alpha}}}$ such that $\varphi(\mu_\phi)=\phi$. Taking into account \eqref{eqaux4.8}, one can try to define $\mu_\phi$ by means of the expression
\begin{equation}\label{intassphi}
    \int_{U_S({\udl b},{\udl n})}f\left(\frac{\frac{y}{x}-{\udl b}}{p^{\udl n}}\right)\chi_S(x)^{-2}d\mu_\phi:={\udl \alpha}^{-\udl n}\phi(g_{\udl b,\udl n})(f), \qquad f\in 
    C_\bullet^{\ast}(\mfo_{F_S},R),\quad \ast={\udl k},\emptyset,\;\bullet={\udl N},\emptyset.
\end{equation}
But \eqref{intassphi} provides a well-defined measure $\mu_\phi\in \D^{\ast}_{\chi_S^{-2}}(R)_{\udl \alpha}$, if and only if, it is independent of the choice of ${\udl b}\in \PP^1(F_S)$ in a fixed class of $\PP^1(\mfo_{F_S}/p^{\udl n})$ and it satisfies additivity. The independence of the choice of ${\udl b}$ follows easily from the relation ${\udl \alpha}^{-\udl n}\phi(\gamma g_{\udl b,\udl n})(f)={\udl \alpha}^{-\udl n}\phi(g_{\udl b,\udl n})(\gamma^{-1}f)$, for any $\gamma\in \mcI_S$. To prove additivity, let ${\udl b}_i=(b_{\mfp,i})_{\mfp\in S}\in \PP^1(F_S)$ be such that 
\[
b_{\mfp,i}=b_\mfp,\quad\mbox{ if $\mfp\neq \mfq$ };\qquad\qquad b_{\mfq,i}=\left\{\begin{array}{ll}
   b_\mfq+i\varpi_{\mfq}^{n_\mfq},& \mbox{if $b_\mfq\in\mfo_{F_\mfq}$};  \\
    (b_\mfq^{-1}+i\varpi_{\mfq}^{n_\mfq})^{-1},&\mbox{otherwise.}
\end{array}\right. 
\]
If $\udl n+1\mfq:=((n_\mfp)_{\mfp\neq\mfq},n_\mfq+1)\in \N^S$ then $U_S({\udl b},{\udl n})=\sqcup_{i\in\mfo_{F_\mfq}/\mfq}U_S({\udl b}_i,{\udl n}+1\mfq)$. Thus, additivity follows from
\begin{eqnarray*}
\sum_{i\in\mfo_{F_\mfq}/\mfq}\int_{U_S({\udl b}_i,{\udl n}+1\mfq)}f\left(\frac{\frac{y}{x}-{\udl b}}{p^{{\udl n}}}\right)\chi_S^{-2}(x)d\mu_\phi&=&\sum_{i\in\mfo_{F_\mfq}/\mfq}\int_{U_S({\udl b}_i,{\udl n}+1\mfq)}\left(g_i f\right)\left(\frac{\frac{y}{x}-{\udl b}_i}{p^{{\udl n}+1\mfq}}\right)\chi_S^{-2}(x)d\mu_\phi\\
&=&\sum_{i\in\mfo_{F_\mfq}/\mfq}{\udl \alpha}^{-{\udl n}-1\mfq}\phi(g_{{\udl b}_i,{\udl n}+1\mfp})(g_i f)=\alpha_\mfq^{-1}{\udl \alpha}^{-{\udl n}}\sum_{i\in\mfo_{F_\mfq}/\mfq}g_i^{-1}\phi(g_ig_{{\udl b},{\udl n}})(f)\\
&=&\frac{{\udl \alpha}^{-{\udl n}}}{\alpha_\mfq}U_\mfq\phi(g_{{\udl b},{\udl n}})(f)={\udl \alpha}^{-{\udl n}}\phi(g_{{\udl b},{\udl n}})(f)=\int_{U_S({\udl b},{\udl n})}f\left(\frac{\frac{y}{x}-{\udl b}}{p^{\udl n}}\right)\chi_S^{-2}(x)d\mu_\phi.
\end{eqnarray*}
It is clear that $\varphi(\mu_\phi)$ and $\phi$ coincide at $\sqcup_{\udl b,\udl n}\mcI_Sg_{\udl b,\udl n}=\sqcup_{\udl n}\mcI_S\bsm 1&\\&p^{\udl n}\esm G(\mfo_{F_S})$. By \cite[lemma 12]{SS91}, for all $g\in G(F_S)$, there exists ${\udl m}\in \N^S$ such that $\mcI_S\bsm1&\\&p^{\udl m}\esm\mcI_S g\subseteq \sqcup_{\udl n}\mcI_S\bsm 1&\\&p^{\udl n}\esm G(\mfo_{F_S})$. Since both $\varphi(\mu_\phi)$ and $\phi$ lie in $\cap_{\mfp\in S}\ker(U_\mfp-\alpha_\mfp)$,
\[
\phi(g)={\udl \alpha}^{-\udl m}\mcI_S\bsm1&\\&p^{-\udl m}\esm\mcI_S\phi\left(\mcI_S\bsm1&\\&p^{\udl m}\esm\mcI_Sg\right)={\udl \alpha}^{-\udl m}\mcI_S\bsm1&\\&p^{-\udl m}\esm\mcI_S\varphi(\mu_\phi)\left(\mcI_S\bsm1&\\&p^{\udl m}\esm\mcI_Sg\right)=\varphi(\mu_\phi)(g).
\]
Thus, $\phi=\varphi(\mu_\phi)\in {\rm Im}(\varphi)$ and the result follows.
\end{proof}


\begin{lemma}\label{exhausUpap}
Assume that $R$ is a complete local $\Z_p$-algebra. Then, we the have exact sequence  
\begin{equation}\label{exseqLem4.9}
    0\longrightarrow \D^{\ast}_{\chi_\mfp^{-2}}(R)_{\alpha_\mfp} \stackrel{\varphi}{\longrightarrow}{\rm coInd}_{\mcI_\mfp}^{G(F_\mfp)}D_{\bullet}^{\ast}(\mfo_{F_\mfp},R)\stackrel{(U_\mfp-\alpha_\mfp)}{\longrightarrow} {\rm coInd}_{\mcI_\mfp}^{G(F_\mfp)}D_{\bullet}^{\ast}(\mfo_{F_\mfp},R)\longrightarrow 0,
\end{equation}
in the following situations:
\begin{itemize}
    \item [(a)] The ring $R\subseteq\mfo_{\C_p}$, $\chi_\mfp(a)=a^{-\frac{k_\mfp}{2}}\chi_\mfp^0(a)$ and $(\ast,\bullet)=(k_\mfp,N_\mfp)$ with $N_\mfp\geq{\rm cond}(\chi_\mfp^0)$. 
    
    \item [(b)] The topology of $R$ is defined by its maximal ideal $m_R$ and $(\ast,\bullet)=(\emptyset,\emptyset)$.
\end{itemize}
\end{lemma}
\begin{proof}
By proposition \ref{propDLDO}, it remains to show surjectivity of $U_\mfp-\alpha_\mfp$. Notice that ${\rm coInd}_{\mcI_\mfp}^{G(F_\mfp)}D_{\bullet}^{\ast}(\mfo_{F_\mfp},R)$ is the dual of ${\rm Ind}_{\mcI_\mfp}^{G(F_\mfp)}C_{\bullet}^{\ast}(\mfo_{F_\mfp},R)$. Hence, we can define the adjoint operator of $U_\mfp$:
\[
 U_\mfp^\ast:{\rm Ind}_{\mcI_\mfp}^{G(F_\mfp)}C(\mfo_{F_\mfp},R)\longrightarrow {\rm Ind}_{\mcI_\mfp}^{G(F_\mfp)}C(\mfo_{F_\mfp},R);\qquad 
U_\mfp^\ast f(g)=\sum_i g_i f(g_i\inv g).
\]

 To prove case (a), we will first show that we have the (dual to \eqref{exseqLem4.9}) exact sequence
 \begin{equation}\label{exseqLem4.92}
 0\longrightarrow {\rm Ind}_{\mcI_\mfp}^{G(F_\mfp)}C_{N_\mfp}^{k_\mfp}(\mfo_{F_\mfp},R)\stackrel{(U_\mfp^\ast-\alpha_\mfp)}{\longrightarrow} {\rm Ind}_{\mcI_\mfp}^{G(F_\mfp)}C_{N_\mfp}^{k_\mfp}(\mfo_{F_\mfp},R)\longrightarrow C_{\chi_\mfp^{-2}}(\mcL_\mfp,R)_{\alpha_\mfp}\longrightarrow 0,
 \end{equation}
 where, by proposition \ref{propDLDO}, the third arrow is  given by the adjoint of $\varphi$: 
 \[
 \varphi^\ast:\coker(U_\mfp^\ast-\alpha_\mfp)\longrightarrow C_{\chi_\mfp^{-2}}(\mcL_\mfp,R)_{\alpha_\mfp};\qquad \varphi^\ast(\phi)= \sum_{g\in \mcI_\mfp\backslash G(F_\mfp)}g^{-1}\widehat{\phi(g)}.
 \]
 To show the exactness of \eqref{exseqLem4.92}, it remains to prove that $(U_\mfp^\ast-\alpha_\mfp)$ is injective and $\varphi^\ast$ is an isomorphism.

To show injectivity of $(U_\mfp^\ast-\alpha_\mfp)$ notice that, for a fixed $j\in\mfo_{F_\mfp}/\mfp$, we have $g_i^{-1}g_j\in \mcI_\mfp$, and this implies that
   \begin{equation}\label{eqauxUa}
   (U_\mfp^\ast-\alpha_\mfp)f(g)=\sum_i g_i f(g_i\inv g_jg_j\inv g)-\alpha_\mfp f(g)=\sum_i g_i g_i\inv g_jf(g_j\inv g)-\alpha_\mfp f(g)=q_\mfp\cdot  g_jf(g_j\inv g)-\alpha_\mfp f(g).
   \end{equation}
  Thus, we obtain that $(U_\mfp^\ast-\alpha_\mfp)$
   is injective since $f$ must be compactly supported in $\mcI_\mfp\backslash G(F_\mfp)$. 

To show that $\varphi^\ast$ is an isomorphism, notice that we have the following formula analogous to \eqref{intassphi} for functions
 \begin{equation}\label{eqauxfs}
       {\alpha_\mfp}^{n_\mfp}\cdot f\left(\frac{\frac{y}{x}- b_\mfp}{\varpi_\mfp^{n_\mfp}}\right)\cdot \chi_\mfp(x)^{-2}\cdot \indi_{U_\mfp(b_\mfp,n_\mfp)}(x,y)=g_{b_\mfp,n_\mfp}^{-1} \widehat{f},\qquad f\in C^{k_\mfp}_{N_\mfp}(\mfo_{F_\mfp},R),
   \end{equation} 
   proving that $\varphi^\ast$ is clearly surjective, but also injective since any element in $ \coker(U_\mfp^\ast-\alpha_\mfp)$ has a representative with support in $\sqcup_{b_\mfp,n_\mfp}\mcI_\mfp g_{b_\mfp,n_\mfp}=\sqcup_{n_\mfp}\mcI_\mfp\bsm 1&\\&\varpi_\mfp^{n_\mfp}\esm G(\mfo_{F_\mfp})$ (see \cite[proposition 2.4]{KS}).
   
 Finally, since $C^{k_\mfp}_{\chi_\mfp^{-2}}(\mcL_\mfp,R)$ is direct limit in $n_\mfp$ of functions that are polynomical when restricted to 
$U_\mfp(b_\mfp,n_\mfp)$ for all $b_\mfp$, we obtain that ${\rm Ext}_R^1(C^{k_\mfp}_{\chi_\mfp^{-2}}(\mcL_\mfp,R)_{\alpha_\mfp},R)=0$ by lemma \ref{lemmaonEXT}. Thus, the exactness of the sequence \eqref{exseqLem4.92} implies that exactness of the sequence \eqref{exseqLem4.9}. 

In case (b), notice that we have an isomorphisms
\[
D(\mfo_{F_\mfp},R)\stackrel{\simeq}{\longrightarrow} \Hom_R\ipa{C^0(\mfo_{F_\mfp},R),R},\qquad {\rm coInd}_{\mcI_\mfp}^{G(F_\mfp)}D(\mfo_{F_\mfp},R)\stackrel{\simeq}{\longrightarrow} \Hom_R\ipa{C_c(\mcI_\mfp\backslash G(F_\mfp),C^0(\mfo_{F_\mfp},R)),R}.
\]
Indeed, $C^0(\mfo_{F_\mfp},R)$ is dense in $C(\mfo_{F_\mfp},R)$, and so, any distribution in  $D(\mfo_{F_\mfp},R)$ is characterized by its restriction to $C^0(\mfo_{F_\mfp},R)$. Moreover,  we can extend continuously any $\mu\in \Hom_R\ipa{C^0(\mfo_{F_\mfp},R),R}$ to $D(\mfo_{F_\mfp},R)$ by means of 
\[
\int_{\mfo_{F_\mfp}}fd\mu=\lim_{n}\mu(f_n),\qquad \lim_n f_n=f;\quad f_n\in C^0(\mfo_{F_\mfp},R),
\]
since $f_{n_1}-f_{n_2}\in C^0(\mfo_{F_\mfp},m_R^N)=m_R^N C^0(\mfo_{F_\mfp},R)$ for $n_1,n_2$ big enough, and therefore $\ipa{\mu(f_n)}_n$ is Cauchy. 
Moreover, formula \eqref{eqauxUa} induces a well defined injective morphism
\[
(U_\mfp^\ast-\alpha_\mfp)^0:C_c(\mcI_\mfp\backslash G(F_\mfp),C^0(\mfo_{F_\mfp},R))\longrightarrow C_c(\mcI_\mfp\backslash G(F_\mfp),C^0(\mfo_{F_\mfp},R)).
\]
By equation \eqref{eqauxfs} we can describe $\coker(U_\mfp^\ast-\alpha_\mfp)^0$ as the set of functions in $C_{\chi_\mfp^{-2}}(\mcL_\mfp,R)$ of the form 
\begin{equation}\label{eqrelCLCO}
    \chi_\mfp^{-2}(x)f_1\ipa{\frac{y}{x}}\indi_{\mfo_{F_\mfp}}\ipa{\frac{y}{x}}+\chi_{\mfp}^{-2}(y)f_2\ipa{\frac{x}{\varpi_\mfp y}}\indi_{\mfp}\ipa{\frac{x}{y}},\qquad f_1,f_2\in C^0(\mfo_{F_\mfp},R).
\end{equation}
Thus, we deduce that $\coker(U_\mfp^\ast-\alpha_\mfp)^0$ is free, and ${\rm Ext}_R^1(\coker(U_\mfp^\ast-\alpha_\mfp)^0,R)=0$. Dualizing the exact sequence 
\[
0\longrightarrow C_c(\mcI_\mfp\backslash G(F_\mfp),C^0(\mfo_{F_\mfp},R))\stackrel{(U_\mfp^\ast-\alpha_\mfp)^0}{\longrightarrow} C_c(\mcI_\mfp\backslash G(F_\mfp),C^0(\mfo_{F_\mfp},R))\longrightarrow \coker(U_\mfp^\ast-\alpha_\mfp)^0\longrightarrow 0,
\]
we obtain that $U_\mfp-\alpha_\mfp$ is then surjective.
\end{proof}


\begin{lemma}\label{lemmaonEXT}
    Let $M=\varinjlim_\alpha M_\alpha$ be a direct limit of free $R$-modules, where the transition maps $\imath_{\alpha,\alpha'}:M_\alpha\rightarrow M_{\alpha'}$ are injective and $\coker(\imath_{\alpha,\alpha'})$ is $R$-free. 
    Then, for any $R$-module $N$, we have that ${\rm Ext}_R^1(M,N)=0$.
\end{lemma}
\begin{proof}
     We have a spectral sequence with $E_2$-term $(R^i\varprojlim_\alpha){\rm Ext}_R^i(M_\alpha,N)$ and converging to ${\rm Ext}_R^{i+j}(M,N)$. On the other hand, ${\rm Ext}_R^1(\coker(\imath_{\alpha,\alpha'}),N)=0$, hence the natural morphism provided by $\coker(\imath_{\alpha,\alpha'})$ 
     \[
     \imath_{\alpha,\alpha'}^\ast:\Hom_R(M_{\alpha'},N)\longrightarrow \Hom_R(M_\alpha,N),
     \]
     is surjective. This implies that the inverse system $\Hom_R(M_\alpha,N)_\alpha$ satisfies the Mittag-Leffler condition, and therefore $R^1\varprojlim\Hom_R(M_\alpha,N)_\alpha=0$. We deduce that ${\rm Ext}_R^1(M,N)=\varprojlim_\alpha {\rm Ext}_R^1(M_\alpha,N)=0$. 
\end{proof}


\subsubsection{Koszul complexes}

Assume that $R$ is a complete $\Z_p$-algebra and we have one of the following settings:
\begin{itemize}
    \item [(a)] The ring $R\subseteq\mfo_{\C_p}$, the character $\chi_p(a)=a^{-\frac{\udl k}{2}}\chi_p^0(a)$ and we have $\udl N=(N_\mfp)_{\mfp\mid p}$ with $N_\mfp\geq{\rm cond}(\chi_\mfp^0)$.
    
    \item [(b)] The topology of $R$ is defined by its maximal ideal $m_R$.
\end{itemize}
For any set $S$ of primes above $p$,
let $D_S$ be either 
$D^{k_S}_{N_S}(\mfo_{F_S},R)$, with $N_S=(N_\mfp)_{\mfp\in S}$ and $k_S=(k_\sigma)_{\sigma\in \Sigma_S}$, in case (a), or $D(\mfo_{F_S},R)$ in case (b). Similarly, let $C_S$ be either 
$C^{k_S}_{N_S}(\mfo_{F_S},R)$ or $C(\mfo_{F_S},R)$. For any tuple ${\udl \alpha}=(\alpha_\mfp)_{\mfp\mid p}\in (R^\times)^{g_p}$, if we write $y_\mfp:=U_\mfp-\alpha_\mfp$, then the polynomial ring $R[y_\mfp]_{\mfp\mid p}$ acts naturally on ${\rm coInd}_{\mcI_p}^{G(F_p)}D_p$. We consider the \emph{Koszul complex}:
\[
C^\bullet:=\Hom_R\ipa{\bigwedge^\bullet R^{g_p},{\rm coInd}_{\mcI_p}^{G(F_p)}D_p},
\]
where, for the canonical basis $\{e_{\mfp}\}_{\mfp}$ of $R^{g_p}$, the boundary maps are given by
\[
d^k:C^{k-1}\longrightarrow C^{k};\quad d^k(\varphi)(e_{\mfp_{i_1}}\wedge\cdots\wedge e_{\mfp_{i_k}})=\sum_{j=1}^k(-1)^{j+1}y_{\mfp_{i_j}}\varphi(e_{\mfp_{i_1}}\wedge\cdots\wedge\hat e_{\mfp_{i_j}}\wedge\cdots\wedge e_{\mfp_{i_k}}).
\]
\begin{proposition}\label{propKoszul}
    The cohomology of the Koszul complex $H^k(C^\bullet)=0$, for all $k>0$.
\end{proposition}
\begin{proof}
    By \cite[X.9.6 remarque 4]{Bour}, it is enought to prove that, for $\mfp$ and any $S$ such that $\mfp\not\in S$,
    \[
    U_\mfp-\alpha_\mfp:\bigcap_{\mfq\in S}\ker(U_\mfq-\alpha_\mfq)\subseteq {\rm coInd}_{\mcI_p}^{G(F_p)}D_p\longrightarrow  \bigcap_{\mfq\in S}\ker(U_\mfq-\alpha_\mfq)\subseteq {\rm coInd}_{\mcI_p}^{G(F_p)}D_p,
    \]
    is surjective. 
    
    Notice that in case (a), ${\rm coInd}_{\mcI_p}^{G(F_p)}D_p=\Hom_R\ipa{{\rm Ind}_{\mcI_S}^{G(F_S)}C_S,{\rm coInd}_{\mcI_{p\setminus S}}^{G(F_{p\setminus S})}D_{p\setminus S}}$. Hence, by proposition \ref{propDLDO},
    \begin{eqnarray*}
        \bigcap_{\mfq\in S}\ker(U_\mfq-\alpha_\mfq)&=&\Hom_R\ipa{C^{k_S}_{\chi_S^{-2}}(\mcL_S,R)_{\alpha_S},{\rm coInd}_{\mcI_{p\setminus S}}^{G(F_{p\setminus S})}D_{p\setminus S}}\\
        &=&\Hom_R\ipa{C^{k_S}_{\chi_S^{-2}}(\mcL_S,R)_{\alpha_S}\oti_R{\rm Ind}_{\mcI_{p\setminus S\cup\mfp}}^{G(F_{p\setminus S\cup\mfp})}C_{p\setminus S\cup\mfp},{\rm coInd}_{\mcI_{\mfp}}^{G(F_{\mfp})}D_{\mfp}},
    \end{eqnarray*}
    where $\chi_S=\prod_{\mfp\in S}\chi_\mfp$.
    Thus, since ${\rm Ind}_{\mcI_{p\setminus S\cup\mfp}}^{G(F_{p\setminus S\cup\mfp})}C_{p\setminus S\cup\mfp}$ is $R$-free, applying lemmas \ref{exhausUpap} and \ref{lemmaonEXT} the result follows.

    In case (b), we have 
    similarly by proposition \ref{propDLDO},
    \begin{eqnarray*}
        \bigcap_{\mfq\in S}\ker(U_\mfq-\alpha_\mfq)&=&\Hom_R\ipa{C^{0}_{\chi_S^{-2}}(\mcL_S,R),{\rm coInd}_{\mcI_{p\setminus S}}^{G(F_{p\setminus S})}D_{p\setminus S}}\\
        &=&\Hom_R\ipa{C^{0}_{\chi_S^{-2}}(\mcL_S,R)\oti_RC_c\ipa{\mcI_{p\setminus S\cup\mfp}\backslash G(F_{p\setminus S\cup\mfp}),C_{p\setminus S\cup\mfp}^0},{\rm coInd}_{\mcI_{\mfp}}^{G(F_{\mfp})}D_{\mfp}},
    \end{eqnarray*}
    where $C^{0}_{\chi_S^{-2}}(\mcL_S,R)$ is the set of functions in $C_{\chi_S^{-2}}(\mcL_S,R)$ that restricted to $\mcL_\mfp$ are of the form \eqref{eqrelCLCO}.
    The result follows from lemma \ref{exhausUpap} since $C^{0}_{\chi_S^{-2}}(\mcL_S,R)\oti_RC_c\ipa{\mcI_{p\setminus S\cup\mfp}\backslash G(F_{p\setminus S\cup\mfp}),C_{p\setminus S\cup\mfp}^0}$ is $R$-free.
\end{proof}

\subsection{Overconvergent modular symbols}\label{OCmodSymb}

In this section we extend ordinary modular symbols to overconvergent modular symbols. 
Assume that $\pi_\mfp$ is 
principal series or Steinberg for all $\mfp\mid p$. Thus, we have a projection 
\[
r:{\rm Ind}_P^G(\hat\chi_p^0)^0\rightarrow V_p^{\C_p}=\bigotimes_{\mfp\mid p}V_\mfp^{\C_p},\qquad \hat\chi_p^0=\prod_{\mfp\mid p}\hat\chi_\mfp^0,
\]
as $G(F_p)$-representations for some locally constant character $\hat\chi_p^0:F_p^\ti\rightarrow\C_p^\ti$. Write $\chi_p^0$ for the restriction of $\hat\chi_p^0$ to $\mfo_{F_p}^\ti$. 
Similarly, we write $\hat\chi_p=z^{-\frac{\udl k}{2}}\hat\chi_p^0:F_p^\ti\rightarrow\C_p^\ti$ and $\chi_p$ for its restriction to $\mfo_{F_p}^\ti$.


Under the above assumptions, our modular symbol
\[
\phi_\lambda^p\in H_\ast^s\ipa{G(F)_+,\mcA^{p\cup\infty}(V_p^{\C_p},V(\udl k)_{\C_p})}^\lambda
\]
satisfies by equation \eqref{defkappa}
\[
r^\ast\phi_\lambda^p\in H_\ast^s\ipa{G(F)_+,\mcA^{p\cup\infty}\ipa{{\rm Ind}_P^G(\hat\chi_p^0)^0,V(\udl k)_{\C_p}}}^\lambda\stackrel{(\kappa^\ast)^{-1}}{\simeq}H_\ast^s\ipa{G(F)_+,\mcA^{p\cup\infty}\ipa{\D^{\udl k}_{\chi_p^{-2}}(\C_p)_{\udl \alpha}}}^\lambda 
\]
where $\udl\alpha=(\alpha_\mfp)_\mfp$ with $\alpha_\mfp=\hat\chi_\mfp^0(\varpi_\mfp)\inv\varpi_\mfp^{\frac{\udl k}{2}}=\hat\chi_\mfp(\varpi_\mfp)\inv$.

\begin{proposition}\label{OCmodsymb}
Assume that for all $\mfp\mid p$ we have $\alpha_\mfp\in\mfo_{\C_p}^\times$,
then any cohomology class $\phi_\lambda^p$ extends to a unique 
\[
\hat\phi_\lambda^p\in H_\ast^s\ipa{G(F)_+,\mcA^{p\cup\infty}(\D_{\chi_p^{-2}}(\C_p)_{\udl\alpha})}^\lambda.
\]
Namely, 
\[
\kappa^\ast\hat\phi_\lambda^p=r^\ast \phi_\lambda^p\in H_\ast^s\ipa{G(F)_+,\mcA^{p\cup\infty}({\rm Ind}_P^G(\hat\chi_p^0)^0,V(\underline{k})_{\C_p})}^\lambda.
\]
\end{proposition}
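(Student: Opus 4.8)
The plan is to reduce Proposition~\ref{OCmodsymb} to the abstract admissibility Lemma~\ref{lemmadmOC} by transporting everything through the isomorphism $\kappa^\ast$ of \S\ref{ss:distributions}. First I would observe that $r^\ast\phi_\lambda^p$ together with $\kappa^\ast$ produces a class
\[
\Psi:=\kappa^\ast(r^\ast\phi_\lambda^p)\in H^u\ipa{G(F)_+,\mcA^{p\cup\infty}(\D^{\udl k}_{\chi_p^{-2}}(\C_p)_{\udl\alpha^\ast})}^\lambda ,
\]
and that what has to be proved is that $\Psi$ lifts uniquely to $H^u(G(F)_+,\mcA^{p\cup\infty}(\D^{\rm an}_{\chi_p^{-2}}(\C_p)_{\udl\alpha^\ast}))^\lambda$ along the specialization map dual to the inclusion $C^{\udl k}_{\chi_p^{-2}}\hookrightarrow C^{\rm an}_{\chi_p^{-2}}$. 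Since $\kappa^\ast$ and $r^\ast$ are $G(F)_+$-equivariant, the only input needed is a lifting statement for the coefficient modules, which is exactly Lemma~\ref{lemmadmOC} once one knows that $\Psi$ is $h_p$-admissible with $h_\mfp=v_p(\alpha_\mfp^\ast)$; the hypothesis $e_\mfp\cdot v_p(\alpha_\mfp^\ast)<\min\{k_\sigma+1,\ \sigma\in\Sigma_\mfp\}$ is then precisely the hypothesis of that lemma.

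The main step is therefore to show that, after rescaling by a factor in $\C_p^\times$ as permitted by \cite[Proposition 4.6]{Spiess}, $\Psi$ is represented by a cocycle with values in the $h_p$-admissible subspace $\D^{\udl k}_{\chi_p^{-2}}(\C_p)_{\udl\alpha^\ast}^{h_p}$, $h_\mfp=v_p(\alpha_\mfp^\ast)$. Here I would use that $\phi_\lambda^p$ is a $U_\mfp$-eigensymbol of slope $v_p(\alpha_\mfp^\ast)$ — equivalently, that $r$ satisfies relations of the type \eqref{relationdelta} analysed in \S\ref{deltap} — together with an integral model of $\phi_\lambda^p$ provided again by \cite[Proposition 4.6]{Spiess}. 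Concretely, the integral $\int_{U_\mfp(b,n)}f_{b,n}^{\udl m}\,d\mu$ against the distribution $\mu=\Psi(g)$ is evaluated by pulling $f_{b,n}^{\udl m}$ back through $\kappa$ and invoking the formula for the action of $\bbm \varpi_\mfp&\\&1\ebm^{n}$ recorded just before Lemma~\ref{lemmadmOC}, which extracts the factor $(\alpha_\mfp^\ast)^{-n}$; integrality of $r^\ast\phi_\lambda^p$ then bounds the remaining term, yielding membership in $A_\mfp p^{-n v_p(\alpha_\mfp^\ast)}\mcO_{\C_p}$. This is the same mechanism as in the proof of Proposition~\ref{propcond} (carried out there for $\delta_p$ in place of $r$), so it should transcribe essentially verbatim.

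Granting admissibility, I would finish as follows. By the $G(F_p)$-invariance of $\D^{\udl k}_{\chi_p^{-2}}(\C_p)_{\udl\alpha^\ast}^{h_p}$ established above and the uniqueness clause in Lemma~\ref{lemmadmOC}, the lifting assignment $\mu\mapsto\hat\mu$ is $G(F_p)$-equivariant: the lift of $g\mu$ and the image under $g$ of the lift of $\mu$ are two locally analytic extensions of $g\mu$ satisfying the same admissibility bounds, hence they coincide. Consequently it induces $\mcA^{p\cup\infty}(\D^{\udl k}_{\chi_p^{-2}}(\C_p)_{\udl\alpha^\ast}^{h_p})\to\mcA^{p\cup\infty}(\D^{\rm an}_{\chi_p^{-2}}(\C_p)_{\udl\alpha^\ast})$ and therefore a map on $H^u(G(F)_+,-)^\lambda$; applying it to the chosen representative of $\Psi$ yields $\hat\phi_\lambda^p$, and $\kappa^\ast\hat\phi_\lambda^p=r^\ast\phi_\lambda^p$ because specializing back (restricting functionals to $C^{\udl k}_{\chi_p^{-2}}$) undoes the lift. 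For uniqueness, two lifts differ by a class valued in $\mcA^{p\cup\infty}(\ker(\D^{\rm an}_{\chi_p^{-2}}(\C_p)_{\udl\alpha^\ast}\to\D^{\udl k}_{\chi_p^{-2}}(\C_p)_{\udl\alpha^\ast}))$ that is still $h_p$-admissible, hence zero by the uniqueness in Lemma~\ref{lemmadmOC} applied coefficientwise.

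I expect the admissibility step of the second paragraph to be the main obstacle: it forces one to unwind $\kappa$ and keep careful track of the $U_\mfp$-scaling so as to land exactly on $h_\mfp=v_p(\alpha_\mfp^\ast)$, and although it parallels \S\ref{deltap} and Proposition~\ref{propcond} it is the one place where genuine computation enters. A secondary point to be careful about is that ``unique $\hat\phi_\lambda^p$'' should be read as unique among $h_p$-admissible lifts — which is what Lemma~\ref{lemmadmOC} delivers — rather than literally unique in all of $H^u(G(F)_+,\mcA^{p\cup\infty}(\D^{\rm an}_{\chi_p^{-2}}(\C_p)_{\udl\alpha^\ast}))^\lambda$.
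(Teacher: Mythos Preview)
Your proposal is correct and follows essentially the same strategy as the paper: reduce to Lemma~\ref{lemmadmOC} by showing that $\kappa^\ast(r^\ast\phi_\lambda^p)$ lands in the $h_p$-admissible subspace, using an integral model supplied by \cite[Proposition 4.6]{Spiess}. The paper makes the admissibility computation concrete by introducing the vector $V=\chi_\mfp^0(x)^{-2}\indi_{\mcO_{F_\mfp}^\ti\times\mcO_{F_\mfp}}$ and the morphism $\theta_V$ to a coinduced module, then writing $f_{b,n}^{\udl m}$ (resp.\ $g_{b,n}^{\udl m}$) as $(\alpha_\mfp^\ast)^{-n}$ times the translate of $V\cdot y^{\udl m}x^{\udl k-\udl m}$ by $\bsm 1&b\\&\varpi_\mfp^n\esm^{-1}$ (resp.\ $\bsm b&1\\\varpi_\mfp^n&\esm^{-1}$); this is precisely the ``extract $(\alpha_\mfp^\ast)^{-n}$'' step you describe, just with the upper-triangular unipotent--diagonal matrices rather than $\bsm\varpi_\mfp&\\&1\esm^n$ directly.
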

\begin{proof}

By propositions \ref{propDLDO} and \ref{propKoszul}, the Koszul complex provides a finite right resolution of $\D^{\udl k}_{\chi_p^{-2}}(R)_{{\underline{\alpha}}}$ by powers of the induced representation ${\rm coInd}_{\mcI_p}^{G(F_p)}D_{\udl N}^{\udl k}(\mfo_{F_S},R)$, where ${\udl N}$ is the conductor of $\chi_p^0$ and $R=\mfo_{\C_p}$ or $\C_p$.
By Remark \ref{remonadmrepcoho} and Remark \ref{remonAiyS},
\begin{eqnarray*}
    H_\ast^s\ipa{G(F)_+,\mcA^{p\cup\iy}\ipa{{\rm coInd}_{\mcI_p}^{G(F_p)}D_{\udl N}^{\udl k}(\mfo_{F_S},\C_p)}}^{\lambda}&=&H_\ast^s\ipa{G(F)_+,\mcA^{\iy}\ipa{D^{\udl N}_{\chi_{p}}(\mfo_{F_{p}},\C_p)}^{ \mcI_{p\setminus S}}}^{\lambda}\\
    &=&H_\ast^s\ipa{G(F)_+,\mcA^{\iy}\ipa{D^{\udl N}_{\chi_{p}}(\mfo_{F_{p}},\mfo_{C_p})}^{ \mcI_{p\setminus S}}}^{\lambda}\oti_{\mfo_{\C_p}}\C_p\\
    &=&H_\ast^s\ipa{G(F)_+,\mcA^{p\cup\iy}\ipa{{\rm coInd}_{\mcI_p}^{G(F_p)}D_{\udl N}^{\udl k}(\mfo_{F_S},\mfo_{C_p})}}^{\lambda}\oti_{\mfo_{\C_p}}\C_p.
\end{eqnarray*}
Thus, we deduce 
\begin{equation}\label{isooncohoDs}
   H_\ast^s\ipa{G(F)_+,\mcA^{p\cup\iy}\ipa{\D^{\udl k}_{\chi_p^{-2}}(\mfo_{\C_p})_{\udl \alpha}\oti_{\mfo_{\C_p}}\C_p}}^{\lambda}=H_\ast^s\ipa{G(F)_+,\mcA^{p\cup\iy}\ipa{\D^{\udl k}_{\chi_p^{-2}}(\C_p)_{\udl \alpha}}}^{\lambda}.
\end{equation}
By Lemma \ref{lemmadmOC}, the natural inclusion $\D^{\udl k}_{\chi_p^{-2}}(\mfo_{\C_p})_{\udl \alpha}\oti_{\mfo_{\C_p}}\C_p\hookrightarrow \D^{\udl k}_{\chi_p^{-2}}(\C_p)_{\udl \alpha}$ factors through
\[
\D^{\udl k}_{\chi_p^{-2}}(\mfo_{\C_p})_{\udl \alpha}\oti_{\mfo_{\C_p}}\C_p\longrightarrow \D_{\chi_p^{-2}}(\C_p)_{\udl \alpha}\stackrel{\rm res}{\longrightarrow}\D^{\udl k}_{\chi_p^{-2}}(\C_p)_{\udl \alpha}.
\]
Thus \eqref{isooncohoDs} implies $H_\ast^s\ipa{G(F)_+,\mcA^{p\cup\iy}\ipa{\D_{\chi_p^{-2}}(\C_p)_{\udl \alpha}}}^{\lambda}=H_\ast^s\ipa{G(F)_+,\mcA^{p\cup\iy}\ipa{\D^{\udl k}_{\chi_p^{-2}}(\C_p)_{\udl \alpha}}}^{\lambda}$, and the result follows.

\end{proof}

\begin{remark}
If the condition $\alpha_\mfp\in\mfo_{\C_p}^\ti$ is fulfilled, we say that the modular symbol $\phi_\lambda^p$ is \emph{ordinary}. In this paper, we will restrict ourselves to the ordinary setting.
\end{remark}

\begin{remark}\label{remintMS}
    Let $S$ be a set of primes above $p$ and write $\hat\chi_S=\prod_{\mfp\in S}\hat\chi_\mfp$ analogously as above.
    Assuming that $\udl k=(0,\cdots,0)$, we have 
    \[
    \D^{\udl k}_{\chi_S^{-2}}(\mfo_{\C_p})_{\udl \alpha}\oti_{\mfo_{\C_p}}\C_p=\D^{0}_{\chi_S^{-2}}(\mfo_{\C_p})_{\udl \alpha}\oti_{\mfo_{\C_p}}\C_p=\Hom\ipa{{\rm Ind}_P^G(\hat\chi_S^0)_{\mfo_{\C_p}}^0,\mfo_{\C_p}}\oti_{\mfo_{\C_p}}\C_p.
    \]
    where ${\rm Ind}_P^G(\hat\chi_S^0)_{\mfo_{\C_p}}^0$ is the induced representation of $\hat\chi_S=\hat\chi_S^0$ with coefficients in $\mfo_{\C_p}$, and $\D^{0}_{\chi_S^{-2}}(\mfo_{\C_p})$ are the corresponding locally constant distributions of $\mcL_{S}:=\prod_{\mfp\in S}\mcL_\mfp$.
    Similarly as in the proof of Proposition \ref{OCmodsymb} and by Remark \ref{remonadmrepcoho}, we have
    \begin{equation}\label{eqBchangeI}
        H_\ast^s\ipa{G(F)_+,\mcA^{S\cup\infty}({\rm Ind}_P^G(\hat\chi_S^0)^0,\C_p)}^\lambda\simeq H_\ast^s\ipa{G(F)_+,\mcA^{S\cup\infty}({\rm Ind}_P^G(\hat\chi_S^0)_{\mfo_{\C_p}}^0,\mfo_{\C_p})}^\lambda\oti_{\mfo_{\C_p}}\C_p.
    \end{equation}
    Moreover, in case that at $\mfp\mid p$ the representation is Steinberg, by definition $\hat \chi_\mfp^0=\varepsilon_\mfp\mid_P$, for a character $\varepsilon_\mfp=(\pm1)^{v_\mfp\circ\det}$, and $V_\mfp={\rm St}(F_\mfp)(\varepsilon_\mfp):={\rm Ind}_P^G(\hat\chi_\mfp^0)^0/\varepsilon_\mfp$. This implies that we have the exact sequence 
    \[
    0\longrightarrow \Hom\ipa{V^R_{\mfp},R}\longrightarrow\Hom\ipa{{\rm Ind}_P^G(\hat\chi_\mfp^0)_{R}^0,R}\longrightarrow R(\varepsilon_\mfp)\longrightarrow 0,
    \]
    where $R=\C_p$ or $\mfo_{\C_p}$. Thus, equation \eqref{eqBchangeI} and a simple induction imply that 
    \[
    H_\ast^s\ipa{G(F)_+,\mcA^{p\cup\infty}(V_p^{\C_p},\C_p)}^\lambda=H_\ast^s\ipa{G(F)_+,\mcA^{p\cup\infty}(V_p^{\mfo_{\C_p}},\mfo_{\C_p})}^\lambda\oti_{\mfo_{\C_p}}\C_p.
    \]
    Hence, up to multiplying by a suitable constant, we can assume that $\phi_\lambda^p\in H_\ast^u\ipa{G(F)_+,\mcA^{p\cup\infty}(V_p^{\mfo_{\C_p}},\mfo_{\C_p})}^\lambda$. 
    
    If $\phi_\lambda^p$ is associated with an ordinary elliptic curve, we have that $\alpha_\mfp\in \Z_p^\times$ for all $\mfp$, and  our modular symbol has coefficients in $\Q_p$. Thus, the same arguments show that (up-to-constant) $\phi_\lambda^p\in H_\ast^s\ipa{G(F)_+,\mcA^{p\cup\infty}(V_p^{\Z_p},\Z_p)}^\lambda$. In fact, for any set of primes $S$ above $p$, an elliptic curve is associated with a modular symbol 
    \[\phi_\lambda^S\in H_\ast^s\ipa{G(F)_+,\mcA^{S\cup\infty}(V_S^{\Z_p},\Z_p)}^\lambda,\qquad\mbox{ where }\quad V_S^{\Z_p}=\bigotimes_{\mfp\in S}V_\mfp^{\Z_p}.
    \]
\end{remark}

\subsection{Pairings}\label{section:pairings}

In order to relate the $p$-arithmetic cohomology groups defined in \S \ref{section:autcla} and the homology groups defined in \S \ref{section:fundcla}, we will define certain pairings that will allow us to perform cap products. 
For this purpose, we assume the following hypothesis:
\begin{hypothesis}\label{hypothesis}
Assume that $\Sigma_B=\Sigma_{\tno{un}}(K/F)$. Hence, in particular, $u=s$ and $G(F)/G(F)_+=T(F)/T(F)_+$. 
\end{hypothesis}

As above, let $S$ be a set of primes $\mfp$ above $p$.
For any $T(F)$-modules $M$ and $N$, let us consider the $T(F)_+$-equivariant pairing
\begin{equation}\label{equation.pairingplus}
\begin{tikzcd}
\langle\cdot,\cdot\rangle_+: &[-3em] C_c^0(T(\A_F^{S\cup\iy}),M)\ti\mcA^{S\cup\iy}(M,N) \ar[r] &[-2em] N, \\[-2em]
& (f,\phi) \ar[r,mapsto] & \langle f,\phi\rangle_+ := \int_{T(\A_F^{S\cup\iy})}\phi(t)(f(t)) d^\ti t, \\[-2em]
\end{tikzcd}
\end{equation}
where $d^\times t$ is the corresponding Haar measure. This implies that, once fixed a character $\lambda:G(F)/G(F)_+=T(F)/T(F)_+\rightarrow\pm 1$, it induces a well-defined $T(F)$-equivariant pairing
\begin{equation}\label{equation.pairingnoplus}
\begin{tikzcd}
\langle\cdot,\cdot\rangle:&[-4em] \Ind_{T(F)_+}^{T(F)} C_c^0(T(\A_F^{S\cup\iy}),M)\ti \mcA^{S\cup\iy}(M,N)(\lambda) \longrightarrow N,\\[-2em]
&\langle f,\phi\rangle:=[T(F):T(F)_+]\inv\sum_{\ovl t\in T(F)/T(F)_+}t\cdot \langle f(t\inv),t\inv \phi\rangle_+, \\[-2em]
\end{tikzcd}
\end{equation}
where $\mcA^{S\cup\iy}(M,N)(\lambda)$ is the twist of the $T(F)$-representation $\mcA^{p\cup\iy}(M,N)$ by the character $\lambda$.

Assume that $M=C_c^0(T(F_S),R)\otimes_R V$ for a ring $R$ and a finite rank $R$-module $V$, then we have 
\[
\Ind_{T(F)_+}^{T(F)}C_c^0(T(\A_F^{S\cup\iy}),M)=C^0_{\rm fc}(T(\A_F),R)\oti_R V.
\]
This implies that \eqref{equation.pairingnoplus} provides a final $T(F)$-equivariant pairing 
\begin{equation}\label{equation.finalpairing}
\begin{tikzcd}
\langle \cdot|\cdot \rangle: C_{\rm fc}^0(T(\A_F),R)\oti_R V\ti \mcA^{S\cup\iy}(C_c^0(T(F_S),R)\oti_R V,N)(\lambda) \longrightarrow N, \\[-2em]
\langle f_S\otimes f^S\oti v|\phi\rangle =[T(F):T(F)_+]\inv\sum_{\ovl x\in T(F)/T(F)_+}\lambda(x)\inv\int_{T(\A^{S\cup\iy})}f^S(x,t)\cdot \phi(t)(f_S\oti v)d^\ti t. 
\end{tikzcd}
\end{equation}

All the pairings above induce cap products in $H$-(co)homology by their $H$-equivariance.
Now denote by $f_\lambda$ the projection of $f$
to the subspace
\[C_{\rm fc}^0(T(\A_F),R)_\lambda:=\icla{f\in C_{\rm fc}^0(T(\A_F),R)\;\tno{ with }f(x,\cdot)=\lambda(x)f(1,\cdot);\;\mbox{ for all }x\in T(F_\infty)}.\]
One easily computes that
\[
\langle f\oti v|\phi\rangle=\langle f_\lambda\oti v|\phi\rangle=\langle f_\lambda\mid_{T(\A^{\iy})}\oti v,\phi\rangle_+, \qquad v\in V,\quad f\in C_{\rm fc}^0(T(\A_F),R),\quad\phi\in \mcA^{S\cup\iy}(C_c^0(T(F_S),R),N)(\lambda).
\]
Since we can identify
$H_\ast^u(G(F)_+,\bullet)^\lambda\simeq H_\ast^u(G(F),\bullet(\lambda))$,
we deduce that for all $f\oti v\in H_u(T(F),C_{\rm fc}^0(T(\A_F),R)\oti_R V)$ and $\phi\in H_\ast^u(T(F)_+,\mcA^{S\cup\iy}(C_c^0(T(F_S),R)\oti_R V,N))^\lambda$,
\begin{equation}\label{equation:capprod}
(f\oti v)\cap\phi=(f_\lambda\oti v)\cap\phi=(f_\lambda\mid_{T(\A_F^{\iy})}\oti v)\cap \tno{res}^{T(F)}_{T(F)_+}\phi\in N, 
\end{equation}
where $\tno{res}^{T(F)}_{T(F)_+}$ is the restriction morphism and the cap products are the induced by \eqref{equation.pairingplus},\eqref{equation.finalpairing}, respectively.

\section{Anticyclotomic $p$-adic L-functions}\label{antipLfunct}

In this section we will define the anticyclotomic $p$-adic L-functions associated with $p$, $T$ and the automorphic cohomology class $\phi_\lambda^p$. From this point on we will assume that hypothesis \ref{hypothesis} is fulfilled.

\subsection{Defining the distribution}\label{Secdefdist}

Let $C_{\udl k}(T(F_p),  \ovl\Q_p)$ be the space of $\ovl\Q_p$-valued locally polynomial functions of $T(F_p)$ of degree less that $\udl k$. These correspond to functions $f:T(F_p)\rightarrow \ovl\Q_p$ such that, in every small enough neighbourhood $U\subseteq T(F_p)$,
\[
f(s_p)=\sum_{|m_{\sigma}|\leq\frac{k_{\sigma}}{2}}a_{\udl m}(U)\prod_{\mfp\mid p}\prod_{\sigma\in\Sigma_\mfp}\sigma_K \ipa{\frac{s_\mfp}{\bar s_\mfp}}^{m_{\sigma}},
\]
where $\udl{m}=(m_{\sigma})\in \Z^d$, $a_{\udl{m}}(U)\in\ovl\Q_p$ and $s_p=(s_\mfp)_{\mfp\mid p}\in U$. By equation \eqref{PtoCoverQp}, there is a $T(F_p)$-equivariant isomorphism 
\begin{equation}\label{eqlocpol}
 C^0(T(F_p), \ovl\Q) \oti_{\bar\Q} \mcP(\udl k)_{\bar\Q}\oti\ovl\Q_p \longrightarrow C_{\udl k}(T(F_p), \ovl\Q_p).
\end{equation}

Let $\mcG_T$ be the Galois group of the abelian extension of $K$ associated with $T$.
By Class Field Theory, there is a continuous surjective morphism
\begin{align}\label{definition.galoisT}
\Theta:\ipa{T(F_\iy)/T(F_\iy)_+\ti T(\A_F^\iy)}/T(F) \ra \mcG_T,
\end{align}
whose kernel is the connected component of $\ipa{T(F_\iy)/T(F_\iy)_+\ti T(\A_F^\iy)}/T(F)$.

Let us consider the subspace of $C(T(\A_F),\ovl\Q_p)$:
\[
C_{\udl k}(T(\A_F),\ovl\Q_p):=\cla{f:T(\A_F^p)\rightarrow C_{\udl k}(T(F_p),\ovl\Q_p),\mbox{ locally constant}},
\]
and write also $C_{\udl k}(\mcG_T,\ovl\Q_p)$ for the subspace of continuous functions $f:\mcG_T\rightarrow\ovl\Q_p$ such that $\Theta^*(f)\in C_{\udl k}(T(\A_F),\ovl\Q_p)$.
The pullback of $\Theta$ together with the cap product by $\eta\in H_u(T(F),C^0_{\rm fc}(T(\A_F),\Z))$ provide the following morphism
\begin{equation}\label{definition.deltamfp}
\begin{tikzcd}
 &[-3em] C_{\udl k}(\mcG_T,\ovl\Q_p) \ar[r,"\Theta^*"] & H^0(T(F),C_{\udl k}(T(\A_F),\ovl\Q_p)) \ar[r,"\cap\eta"] & H_u(T(F),C_{\udl k,\rm fc}(T(\A_F),\ovl\Q_p)). \\[-2em]
\end{tikzcd}
\end{equation}
where $C_{\udl k,\rm fc}(T(\A_F),\ovl\Q_p))$ is the subspace of functions in $f\in C_{\udl k}(T(\A_F),\ovl\Q_p))$ such that $f(x,\cdot)$ is compactly supported for all $x\in T(F_\infty)$, and the cap product is given by usual multiplication of functions in $C(T(\A_F),\ovl\Q_p)$. Notice that \eqref{eqlocpol} provides isomorphisms
\begin{equation}\label{descCkc}
C_{\udl k}(T(\A_F),\ovl\Q_p))=
 C^0(T(\A_F), \ovl\Q)\oti_{\ovl\Q} \mcP(\udl k)_{\ovl\Q}\oti\ovl\Q_p,\qquad
C_{\udl k,\rm fc}(T(\A_F),\ovl\Q_p))=
 C_{\rm fc}^0(T(\A_F), \ovl\Q)\oti_{\ovl\Q} \mcP(\udl k)_{\ovl\Q}\oti\ovl\Q_p .    
\end{equation}

In order to define our distribution, we will construct a $T(F_p)$-equivariant morphism $\delta_p: C_c^0(T(F_p),\ovl\Q) \rightarrow V_p$. Given such a $\delta_p$ and $\phi_\lambda^p\in H_\ast^u\ipa{G(F)_+,\mcA^{p\cup\iy}(V_p,V(\udl k)_{\ovl\Q})}^\lambda$, we  define the distribution $\mu_{\phi^p_\lambda}$  as:
\begin{align}\label{definition.distribution}
\mu_{\phi^p_\lambda}(g)=\int_{\mcG_T}g d\mu_{\phi^p_\lambda}:=\ipa{\Theta^\ast g\cap\eta}\cap\delta_p^\ast\phi^p_\lambda,\quad\tno{ for all }g\in C_{\udl k}(\mcG_T,\ovl\Q_p)
\end{align}
where the cap product is induced by the pairing \eqref{equation.finalpairing} and 
\[
\delta_p^*: \mcA^{p\cup\iy}(V_p,V(\udl k)_{\ovl\Q})\lra \mcA^{p\cup\iy}\ipa{C_c^0(T(F_p),\ovl\Q)\oti_{\ovl\Q} \mcP({\udl k})_{\ovl\Q},\ovl\Q_p}
\]
is the corresponding $T(F)$-equivariant pullback. Indeed, the pairing \eqref{equation.finalpairing} with $S=\{\mfp\mid p\}$, $R=\bar\Q$, $V=\mcP({\udl k})_{\ovl\Q}$ and $N=\bar\Q_p$ is given by 
\[
\langle \cdot|\cdot \rangle: C_{\rm fc}^0(T(\A_F),\bar\Q)\oti_{\bar\Q} \mcP({\udl k})_{\ovl\Q}\ti \mcA^{p\cup\iy}(C_c^0(T(F_p),\bar\Q)\oti_{\bar\Q} \mcP({\udl k})_{\ovl\Q},\bar\Q_p)(\lambda) \longrightarrow \bar\Q_p.
\]
Thus, 
the cap product in \eqref{definition.distribution} is well-defined.

\subsection{The morphism $\delta_p$}\label{deltap}

As seen in the previous sections, we want to construct a $T(F_p)$-equivariant morphism
\[
    \delta_p: C_c^0(T(F_p),\ovl\Q)\simeq \bigotimes_{\mfp\mid p}C_c^0(T(F_\mfp),\ovl\Q) \longrightarrow V_p\simeq\bigotimes_{\mfp\mid p}V_\mfp.
\]
Hence, it is enough to construct $T(F_\mfp)$-equivariant morphisms $\delta_\mfp: C_c^0(T(F_\mfp),\ovl\Q) \longrightarrow V_\mfp$.


Let us fix a place $\mfp\mid p$, and let $\pi_\mfp$ be the local representation. Recall that by hypothesis $G(F_\mfp)\simeq\PGL_2(F_\mfp)$, hence we fix such an isomorphism. From now on we will make the following assumptions:
\begin{hypothesis}\label{hypothesisPSt}
Let $P\subset \GL_2(F_\mfp)$ be the subgroup of upper triangular matrices, then we assume that $\imath(K_\mfp^\ti)\not\subset P$. Moreover, we will assume that $\pi_\mfp$ is either principal series or Steinberg.
\end{hypothesis}
\begin{remark}
    Since $T(F_\mfp)=K_\mfp^\times/F_\mfp^\ti$, we will usually denote by $\tilde t\in K_\mfp^\times$ any preimage of $t\in T(F_\mfp)$. 
    Let us consider
\[
X_\mfp:=\left\{\begin{array}{ll}\uhp_\mfp=K_\mfp\setminus F_\mfp, &\mbox{if $T$ does not split at $\mfp$},\\
\PP^1(F_\mfp),&\mbox{ if splits at $\mfp$}.\end{array}\right.
\]
In both cases $X_\mfp$ comes equipped with a natural action of $G(F_\mfp)$ given by fractional linear transformations. We write $\tau_\mfp$ and $\bar\tau_\mfp$ for the two fixed points by $\iota(T(F_\mfp))$ in $X_\mfp$. Since $\iota(K_\mfp^\ti)\cap P=F_\mfp^\ti$, in the split case $\tau_\mfp,\bar\tau_\mfp\neq\iy$. In fact, $\tau_\mfp$ and $\bar\tau_\mfp$ define a pair of simultaneous eigenvectors $v_1^\mfp$ and $v_2^\mfp$. Indeed, we can write $v_1^\mfp=(1,-\bar\tau_\mfp)$ and $v_2^\mfp=(1,-\tau_\mfp)$ since we have the following identity
\begin{equation}\label{equation.eigenvalue}
(1,-\bar\tau_\mfp)\iota(\tilde t)=(1,-\bar\tau_\mfp)\bbm a & b \\ c & d \ebm =(a-c\bar\tau_\mfp) (1,-\iota(\tilde t\inv)\bar \tau_\mfp )=\lambda_{\tilde t}(1,-\bar\tau_\mfp),\qquad (1,-\tau_\mfp)\iota(\tilde t)=\bar \lambda_{\tilde t}(1,-\tau_\mfp),
\end{equation}
where $\lambda_{\tilde t}$ and $\bar\lambda_{\tilde t}$ are the corresponding eigenvectors. By abuse of notation, we will sometimes denote by $t$ the quotient $t=\lambda_{\tilde t}/\bar\lambda_{\tilde t}$, identifying $T(F_\mfp)$ with $F_\mfp^\times$, in the split case, or with $K_\mfp^1=\{x\in K_\mfp^\times;\;{\rm N}_{K_\mfp/F_\mfp}(x)=1\}$, otherwise. Under this identification, we will sometimes think of $C(T(F_\mfp),\cdot)$ as a set of functions on $F_\mfp^\times$ or $K_\mfp^1$, respectively. 
\end{remark}

By the previous hypothesis \ref{hypothesisPSt}, $V_\mfp$ is a quotient of 
\[
\Ind_P^G(\hat\chi_\mfp^0)^0=\icla{f\in\GL_2(F_\mfp)\rightarrow\ovl\Q,\mbox{ locally constant }\;f\left(\bbm x_1 & y \\ & x_2 \ebm g \right)=\hat\chi_\mfp^0\left(\frac{x_1}{x_2}\right)\cdot f(g)},
\]
for a locally constant character $\hat\chi_\mfp^0$.
Moreover $\imath(K_\mfp^\ti)\cap P=F_\mfp^\ti$, hence we construct  
\begin{equation}\label{eq:deltaSexplicit}
\begin{tikzcd}[ampersand replacement=\&]
\delta_\mfp: C_c^0(T(F_\mfp),\ovl\Q)\ar[r] \&[-3em] \Ind_P^G(\hat\chi_\mfp^0)^0, \\[-2em] 
f\ar[r,mapsto] \& \delta_\mfp(f)\left(g\right):=
\begin{cases}
\hat\chi_\mfp^0\left(\frac{x_1}{x_2}\right)\cdot f(t^{-1}), & g=\bsm x_1 \& y \\ \& x_2 \esm\imath(\tilde t)\in P\iota(K_\mfp^\ti), \\
\quad 0, &  g\not\in P\iota(K_\mfp^\ti).\\
\end{cases}
\end{tikzcd}
\end{equation}
It is clearly $T(F_\mfp)$-equivariant. Moreover, it induces the desired $T(F_\mfp)$-equivariant morphism
\[
\delta_\mfp:C_c^0(T(F_\mfp),\ovl\Q)\longrightarrow V_\mfp.
\]
The following result is a generalization of \cite[Lemma 5.2]{HerMol1} for more general induced representations:
\begin{lemma}\label{lemma:mordelta}
The morphism $\delta_\mfp$ is given by  
\[
\delta_\mfp:C^0_c(T(F_\mfp),\ovl\Q)\longrightarrow \Ind_P^G(\hat\chi_\mfp^0)^0;\qquad\delta_\mfp(f)\bsm a&b\\c&d\esm=\hat\chi_\mfp^0\ipa{\frac{ad-bc}{(c\tau_\mfp+d)(c\ovl\tau_\mfp+d)}}\cdot f\ipa{\frac{c\bar\tau_\mfp+d}{c\tau_\mfp+d}}.
\]
In particular, if $\mfp$ does not split in $K$ then $\delta_\mfp$ is bijective.
\end{lemma}
\begin{proof}
From the relations
$(1,-\bar\tau_\mfp)\iota(\tilde t)=\lambda_{\tilde t}(1,-\bar\tau_\mfp)$ and $(1,-\tau_\mfp)\iota(\tilde t)=\bar \lambda_{\tilde t}(1,-\tau_\mfp)$
we deduce
\begin{equation}\label{eqiotat}
\imath(\tilde t)=\frac{1}{\tau_\mfp-\ovl \tau_\mfp}\bbm \lambda_{\tilde t}\tau_\mfp-\ovl\lambda_{\tilde t}\ovl \tau_\mfp & \tau_\mfp\ovl\tau_\mfp(\ovl\lambda_{\tilde t}-\lambda_{\tilde t}) \\ \lambda_{\tilde t}-\ovl\lambda_{\tilde t}  & \ovl\lambda_{\tilde t}\tau_\mfp-\lambda_{\tilde t}\ovl\tau_\mfp \ebm=\frac{\ovl\lambda_{\tilde t}}{\tau_\mfp-\ovl \tau_\mfp}\bbm t\tau_\mfp-\ovl \tau_\mfp & \tau_\mfp\ovl\tau_\mfp(1-t) \\ t-1  & \tau_\mfp-t\ovl\tau_\mfp \ebm,
\end{equation}
where $t=\lambda_{\tilde t}/\bar\lambda_{\tilde t}$. Hence, if we have
\[
\bbm a&b\\c&d\ebm=\bbm x_1&y\\&x_2\ebm\imath(\tilde t)=\frac{\ovl\lambda_{\tilde t}}{\tau_\mfp-\ovl \tau_\mfp}\bbm x_1&y\\&x_2\ebm\bbm t\tau_\mfp-\ovl \tau_\mfp & \tau_\mfp\ovl\tau_\mfp(1-t) \\ t-1  & \tau_\mfp-t\ovl\tau_\mfp \ebm
\]
we obtain the identities
\[
 -\frac{d}{c}  =  \frac{t\inv \tau_\mfp-\ovl\tau_\mfp}{t\inv -1},\qquad
 ad-bc=x_1x_2\lambda_{\tilde t}\ovl\lambda_{\tilde t},\qquad 
 t\inv =  \frac{\ovl\tau_\mfp+\frac{d}{c}}{\tau_\mfp+\frac{d}{c}}=\frac{c\ovl\tau_\mfp+d}{c\tau_\mfp+d} ,\qquad c=\frac{x_2(t-1)\ovl\lambda_{\tilde t}}{\tau_\mfp-\ovl\tau_\mfp}.
\]
Using such identities, the result follows from a straightforward computation.
\end{proof}

\subsection{Interpolation properties}\label{section:IntProp}

As we have previously emphasized, we have to think of $\mu_{\phi_\lambda^p}$ as a generalization of Bertolini-Darmon anticyclotomic $p$-adic L-function. Hence, it should have a link to the classical L-function, namely, an interpolation property.

In order to have an explicit interpolation formula, write $N\subseteq\mfo_F$ for the level of $\pi$. Write $\Pi$ for the Jacquet-Langlands lift of $\pi$ to $\PGL_2$, and let $U_0(N)\subseteq \GL_2(\A_F^\infty)$ be the usual open compact subgroup of upper triangular matrices modulo $N$. 
Notice that the usual space of automorphic forms for $\PGL_2$ of weight $\underline k+2\in (2\N)^d$ and level $U_0(N)$ can be described as
\begin{equation}\label{defS2}
M_{\udl k+2}(U_0(N)):=\Hom_{(\mfg_\infty,\mcK_\infty)}\ipa{D(\underline{k}+2),\mcA(\PGL_2)^{U_0(N)}},    
\end{equation}
where $\mcA(\PGL_2)^{U_0(N)}$ is the space of $U_0(N)$-invariant automorphic forms for $\PGL_2/F$ and $D(\underline k+2)$ is the $(\mfg_\infty,\mcK_\infty)$-module $D(\underline k+2)=\bigotimes_{\sigma\mid\infty}D_\sigma(k_\sigma+2)$, where $D_\sigma(k_\sigma+2)$ is the $(\mfg_\sigma,\mcK_\sigma)$-module of discrete series of weight $k_\sigma+2$.
In \cite[\S 3.2]{preprintsanti2}, a normalized automorphic form $\Psi\in M_{\udl k+2}(U_0(N))$ generating $\Pi$ is introduced, so that certain period integral of $\Psi$ agrees with the L-function associated with $\Pi$. In fact, $\Psi$ corresponds to the normalized Hilbert newform.

To provide similar definitions for the group $G$, let $D$ be the discriminant of $K$ and let us consider $c\subset\mfo_F$ an ideal prime to $p$. In order to simplify the resulting formulas, we will assume the following hypothesis:
\begin{hypothesis}\label{simplhyp}
    For every finite place $v$, either $\ord_v(c)\geq \ord_v(N)$ or $\ord_v(c)=0$, with $\ord_v(N)\leq 1$ in case $\ord_v(c)=0$ and $K_v$ non-split. In addition, we will assume that $V_p$ is a product of local representations that are either spherical or Steinberg, equivalently $\mfp^2\nmid N$ for all $\mfp\mid p$, hence $V_\mfp$ are quotients of $\Ind_P^G(\hat\chi^0_\mfp)$ with $\hat\chi^0_\mfp$ unramified. 
\end{hypothesis} 
In a situation where these conditions are not fulfilled one can still use the results of \cite{preprintsanti3} to compute the interpolation formulas, but they become much more complicated. For any finite place $v$, we say that an Eichler order $\mcO_{N,v}\subseteq B_v$ of discriminant $N\mfo_{F_v}$ is \emph{$c$-admissible} if the intersection $\mfo_{c,v}=K_v\cap\mcO_{N,v}$ is an order of conductor $c\mfo_{F_v}$ and, in case $v\mid (N,c)$, then $\mcO_{N,v}$ is the intersection of two maximal orders $\mcO_{B,v}'$ and $\mcO_{B,v}''$ of $B_v$ such that 
    $\mcO_{B,v}'\cap K_v=\mfo_{c,v}$ and $\mcO_{B,v}''\cap K_v=\mfo_{c/N,v}$.
By \cite[lemma 3.6]{preprintsanti3}, for $c$ and $N$ satisfying the above hypothesis \ref{simplhyp} admissible Eichler orders always exist. Thus, we will choose a $c$-admissible $\mcO_{N,v}$ for every finite place $v$.

 Let $U_{Np}=U_N^p\times U_0(p)$, where $U_N^p=\prod_{v\nmid p\cup\infty}\mcO_{N,v}^\ti$ and $U_0(p)=\prod_{\mfp\mid p}U_0(\mfp)$.
In analogy with \eqref{defS2} 
\begin{equation}\label{defS22}
M_{\udl k+2}(U_{Np}):=\Hom_{(\mfg_\infty,\mcK_\infty)}\ipa{D(\underline{k}+2),\mcA(G)^{U_{Np}}},    
\end{equation}
where $\mcA(G)^{U_{Np}}$ is the space of $U_{Np}$-invariant automorphic forms for $G/F$ and now the $(\mcG_\infty,\mcK_\infty)$-module $D(\underline k+2)=\bigotimes_{\sigma\in\Sigma_B}D_\sigma(k_\sigma+2)\oti\bigotimes_{\sigma\not\in\Sigma_B}V(k_\sigma)$. In \cite[\S 3.2]{preprintsanti2}, a natural bilinear inner product $\langle\;,\;\rangle:M_{{\udl k}+2}(U_{Np})\times M_{{\udl k}+2}(U_{Np})\rightarrow\C$ is defined.
As explained in \cite[remark 3.2]{preprintsanti2}, if $G=\PGL_2$ then we have the equality $\langle\phi,\bar\phi\rangle=2^{{\underline k}+1}\pi^d{\rm vol}(U_0(N))(\phi,\phi)_{U_0(N)}$, where $(\;,\;)_{U_0(N)}$ is the usual Petersson inner product.
In \cite[\S 3.1]{preprintsanti2} an explicit Harder-Eichler-Shimura morphism ${\rm ES}_\lambda$ is described for a fixed characted $\lambda$ (see also \cite{ESsanti}):
\[
{\rm ES}_\lambda:M_{\udl k+2}(U_{Np})\longrightarrow H^u(G(F)_+,\mcA^{\iy}(V(\udl k))^{U_{Np}})^{\lambda}.
\] 
Since for any $v\in V_p$, we have a $G(F)$-equivariant morphism $\cdot(v):\mcA^{p\cup\iy}(V_p,V(\udl k))(\lambda) \ra \mcA^\iy(V(\udl k))(\lambda)$ 
\[
\phi(v)(g_p,g^p):=\phi(g^{p})(g_p  v);\qquad g_p\in G(F_p),\;g^p\in G(\A_F^{p\cup\iy}),
\]
assuming that $\phi^p_\lambda\in H^u(G(F)_+,\mcA^{p\cup\iy}(V_p,V(\udl k))^{U_N^p})^{\lambda}$ we can consider $\Phi_{\udl\alpha}\in M_{\udl k}(U_{Np})$ such that
\begin{equation}\label{deff0intprop}
    {\rm ES}_\lambda\ipa{\Phi_{\udl \alpha}}=\phi^p_\lambda(f_p),\qquad f_p=\bigotimes_{\mfp\mid p}f_{0,\mfp}\in V_p^{U_0(p)},
\end{equation}
where the vectors $f_{0,\mfp}\in V_\mfp^{U_0(\mfp)}$ are defined by $f_{0,\mfp}(bk)=\hat\chi_\mfp^0(b)\indi_{U_0(1)\setminus U_0(\mfp)}(k)$ for all $b\in P$ and $k\in U_0(1)=\GL_2(\mfo_{F_\mfp})$. Since $U_\mfp f_{0,\mfp}=\alpha_\mfp\varpi_\mfp^{-\udl k/2} f_{0,\mfp}$, where $U_\mfp$ is the usual Hecke operator, the form $\Phi_{\udl \alpha}$ can be seen as the $\mfp$-stabilization of a newform for all $\mfp\mid p$.

Recall that by equation \eqref{PtoCoverC} we have a natural $T(F)$-equivariant morphism
\begin{equation}\label{descCkcC}
    C^0(T(\A_F), \ovl\Q) \oti_{\bar\Q} \mcP(\udl k)_{\bar\Q} \longrightarrow C(T(\A_F), \C),
\end{equation}
analogously as in \eqref{descCkc}. Its image lie in the set of continuous functions that are locally polynomical at $T(F_\infty)$.
\begin{definition}
Let $\xi\in C_{\underline{k}}(\mcG_T,\C_p)$ be a locally polynomial character. Thus, in a neighbourhood $U$ of $1$ in $T(F_\mfp)$
\[
\Theta^\ast\xi\mid_{U}(t_p)=\prod_{\mfp\mid p}\prod_{\sigma\in\Sigma_\mfp}\sigma_K\ipa{\frac{t_\mfp}{\bar t_\mfp}}^{m_{\sigma}}, \qquad \underline{m}=(m_{\sigma});\quad -\frac{\underline{k}}{2}\leq\underline{m}\leq\frac{\underline{k}}{2}.
\]
We define the archimedean avatar of $\xi$:
\[
\tilde\xi:T(\A_F)/T(F)\longrightarrow\C^\ti;\qquad \tilde\xi(t)=\Theta^\ast\xi(t)\cdot\prod_{\mfp\mid p}\prod_{\sigma\in\Sigma_\mfp}\sigma_K\ipa{\frac{t_\mfp}{\bar t_\mfp}}^{-m_{\sigma}}\cdot\prod_{\sigma\in\iy}\sigma_K\ipa{\frac{t_\sigma}{\bar t_\sigma}}^{m_{\sigma}},
\]
once identified the set of embeddings $\sigma:F\hookrightarrow\R$  with $\bigcup_{\mfp}\Sigma_\mfp$.
\end{definition}
\begin{remark}
   Notice that $\Theta^\ast\xi$ and $\tilde \xi$ correspond to the same element of $\xi_0\oti P_{\underline m}\in C^0(T(\A_F), \ovl\Q) \oti_{\bar\Q} \mcP(\udl k)_{\bar\Q}$ under the morphisms of \eqref{descCkc} and \eqref{descCkcC}.
\end{remark}
We say that a a locally polynomial character $\xi\in C_{\underline{k}}(\mcG_T,\C_p)$ has conductor $c$ outside $p$ if the conductor $c_\xi$ of 
$\tilde\xi$ satisfies $c\mfo_{F_v}=c_\xi\mfo_{F_v}$ for any finite place $v\nmid p$. For every $\mfp\mid p$ we write $\xi_\mfp=\tilde\xi\mid_{T(F_\mfp)}$. 

Let $\epsilon(B_v)=1$ if $B_v\simeq M_2(F_v)$ and $\epsilon(B_v)=-1$ otherwise.
The embedding $K\subset B$ provides a decomposition $B=K\oplus KJ_0$, with  $J_0$ in the normalizer of $K$ in $B$ satisfying $J_0^2=M\in F^\times$. By \cite[lemma 3.6]{preprintsanti3} there exists $k_0=(k_{0,v})_v\in T(\A_F^p)$ such that $k_0^{-1}J_0\in G(F_\infty)_+\times w^{Dcp}U_N^p$, where $w^{Dcp}$ is the Atkin-Lehner involution $w^{Dcp}=\prod_{v\mid N;v\nmid Dcp}w_v$. 
We denote by $\varepsilon^{Dcp}$ the eigenvalue of $w^{Dcp}$ on $\pi^{U_N^p}$.

\begin{theorem}\label{THMintprop}
Let $\xi\in C_{\underline{k}}(\mcG_T,\C_p)$ be a locally polynomial character of conductor $c$ outside $p$.  If $\xi_0\mid_{T(F_\iy)}\neq\lambda$ or the local root number $\epsilon(1/2,\pi_v,\xi_v)\neq\xi_v\psi_{K_v}(-1)\epsilon(B_v)$ for any finite place $v\nmid p$, then $\mu_{\phi^p_\lambda}(\xi)=\int_{\mcG_T}\xi d\mu_{\phi^p_\lambda}=0$. Otherwise, 
\[
\ipa{\int_{\mcG_T}\xi d\mu_{\phi^p_\lambda}}^2=
\frac{2^{\#\Sigma_D^p}L_{c}(1,\psi_{K})^2h^2
C(\underline k,\underline m)}{\varepsilon^{Dcp}\xi_0^{-1}(\tilde k_{0})|c_\xi^2 D|^{\frac{1}{2}}} L^{\Sigma}(1/2,\Pi,\tilde\xi)\frac{\langle \Phi_{\udl\alpha},\Phi_{\udl\alpha}\rangle}{\langle \Psi,\Psi\rangle}\frac{{\rm vol}(U_0(N))}{{\rm vol}(U_N){\rm vol}(\mfo_{K_p}^\ti/\mfo_{F_p}^\ti)^{2}}\prod_{\mfp\mid p}C(\pi_\mfp)C(\pi_\mfp,\xi_\mfp),
\]
where $L_c(1,\psi_{K})$ is the product of local L-functions at places $v\mid c$, $\Sigma_D^p=\{v\mid (N,D);\;v\nmid c p\}$, $\Sigma=\{v\mid (N,c)\}\cup\{\mfp\mid p;\;\mfp\nmid(N,D)\}$, $L^\Sigma(1/2,\Pi,\tilde\xi)$ is the L-function with the local factors at places $v\mid \Sigma\cup\infty$ excluded, $\tilde k_0=(\tilde k_{0,v})\in T(\A_F)$ is such that $\tilde k_{0,v}=k_{0,v}$ at places $v\nmid p$ and $\tilde k_{0,\mfp}$ satisfies $J_0\tilde k_{0,\mfp} \in P$ at places $\mfp\mid p$, $h=\#T(\mfo_F)_{+,{\rm tors}}$. Finally, 
\begin{eqnarray*}
C(\underline k,\underline m)&=&(-1)^{\left(\sum_{\sigma\in\Sigma_B}\frac{k_{\sigma}+2}{2}\right)}\left(\frac{1}{4\pi}\right)^{r_{K/F}}\prod_{\sigma\mid\infty}\Gamma\left(\frac{k_{\sigma}+2}{2}-m_{\sigma}\right)\Gamma\left(\frac{k_{\sigma}+2}{2}+m_{\sigma}\right)(2\pi)^{-k_{\sigma}-2};\\
    C(\pi_\mfp)&=&\left\{
    \begin{array}{ll}
       \zeta_\mfp(1)^2L(0,\chi_\mfp^2)\zeta_{\mfp}(2)^{-1},  &\mfp\nmid N;  \\
        \zeta_\mfp(1)\zeta_{\mfp}(2)^{-1},&\mfp\mid N,\\
    \end{array}\right.\\
     C(\pi_\mfp,\xi_\mfp)&=&\left\{
    \begin{array}{ll}
    \varepsilon(\pi_\mfp,\xi_\mfp)\hat\chi^0_{\mfp}\left(\frac{D}{(\tau_\mfp-\ovl\tau_\mfp)^2}\right),&\ord_\mfp(c_\xi)=0;\\
    \ipa{\hat\chi^0_\mfp(\varpi_\mfp)q_\mfp}^{2\ord_\mfp(c_\xi)}\hat\chi^0_{\mfp}\left(\frac{D}{(\tau_\mfp-\ovl\tau_\mfp)^2}\right),&\ord_\mfp(c_\xi)>0,
    \end{array}\right.,\qquad \varepsilon(\pi_\mfp,\xi_\mfp)=\frac{L(-1,\xi_\mfp\cdot(\hat\chi^0_\mfp)^{-1}\circ{\rm N}_{K_\mfp/F_\mfp})}{L(0,\xi_\mfp\cdot\hat\chi^0_\mfp\circ{\rm N}_{K_\mfp/F_\mfp})}.
\end{eqnarray*}
\end{theorem}
\begin{proof}
Let us consider the following $T(F)$-equivariant morphism: For all $z\in T(F_\iy)$ and $t\in T(\A_F^\iy)$
\[
\varphi:(C^0(T(\A_F),\C)\oti \mcP(\underline{k}))\otimes\mcA^\iy(V(\underline{k}))(\lambda)\longrightarrow C^0(T(\A_F),\C);\qquad\varphi((f\oti P)\oti\phi)(z,t):=\frac{f(z,t)}{\lambda(z)}\cdot\phi(t)\ipa{P}.
\]
Let us also consider the natural pairing $\langle\cdot,\cdot\rangle_T:C^0(T(\A_F),\C)\ti C_{\rm fc}^0(T(\A_F),\C)\ra\C$ given by the Haar measure
\[
\langle f_1,f_2\rangle_T:=\sum_{z\in CC}\int_{\mbT}\int_{T(\A_F^\iy)}f_1(zs,t)\cdot f_2(zs,t)d^\ti t d^\times s,\qquad CC=T(F_\iy)/T(F_\iy)_+,
\]
where $\mbT$ is the maximal compact quotient of $T(F_\iy)_+$.
For any $f\in C^0(T(\A_F),\C)$ and $f_2\in C_{\rm fc}^0(T(\A_F),\C)$, write $f\cdot f_2=f_p\otimes f^p$, where $f_p\in C_c(T(F_p),\C)$ and $f^p\in C_c(T(\A_F^p),\C)$, and let $H=\prod_\mfp H_\mfp\subseteq T(F_p)$ be a small enough open compact subgroup so that $f_p$ is $H$-invariant, namely, $f_p=\sum_{t_p\in T(F_p)/H}f_p(t_p)\indi_{t_pH}$. We compute using the concrete description of $\langle\cdot|\cdot\rangle$ provided in \eqref{equation.finalpairing}: 
\begin{eqnarray*}
\langle\varphi(f\oti P,\phi(\delta_p(\indi_H))),f_2\rangle_T
&=&{\rm vol}(\mbT)\sum_{z\in CC}\lambda(z)\inv\cdot\int_{T(\A_F^{p\cup\iy})}\int_{T(F_{p})}f^p(z,t^p)\cdot f_p(t_p)\cdot\phi(t^p)(\delta_p(\indi_{t_pH}))\ipa{P}d^\ti t^p d^\ti t_p\\
&=&{\rm vol}(\mbT)\cdot{\rm vol}(H)\sum_{z\in CC}\lambda(z)\inv\cdot\int_{T(\A_F^{p\cup\iy})}f^p(z,t^p)\cdot\delta_p^\ast\phi(t^p)(f_p)(P)d^\ti t^p\\
&=&[T(F):T(F)_+]\cdot{\rm vol}(\mbT)\cdot{\rm vol}(H)\cdot\langle f\cdot f_2\otimes P|\delta_p^\ast\phi\rangle,
\end{eqnarray*}
for all $\phi\in \mcA^{p\cup\iy}(V_p,V(\underline{k}))(\lambda)$. 
Since $[T(F):T(F)_+]=2^{u}$ and ${\rm vol}(\mbT)=2^{r_{K/F}}$, we obtain by definition
\[\int_{\mcG_T}\xi d\mu_{\phi^p_\lambda}=(\Theta^\ast\xi\cap\eta)\cap\delta_p^\ast\phi^p_\lambda=\frac{1}{2^{[F:\Q]}\cdot{\rm vol}(H)}(\tilde\xi\cup\phi_H)\cap\eta,\qquad \phi_H=\phi_\lambda^p(\delta_p(\indi_H)),
\]
where the cap and cup products in $(\tilde\xi\cup\phi_H)\cap\eta$ correspond to the pairings $\langle\cdot,\cdot\rangle_T$ and $\varphi$, and $\tilde \xi$ is seen as an element of $C^0(T(\A_F),\C)\oti \mcP(\underline{k})$ since lies on the image of \eqref{descCkcC}.
In \cite[Theorem 1.3]{preprintsanti3} and expression for $\ipa{(\tilde\xi\cup \phi_H)\cap\eta}^2$ is obtained in terms of the classical L-function. More precisely, $\left((\rho^\ast\xi\cup \phi_H)\cap\eta\right)=0$ unless $\epsilon(1/2,\pi_v,\xi_v)=\xi_v\psi_{K_v}(-1)\epsilon(B_v)$ and $\xi_0\mid_{T(F_\iy)}=\lambda$ and, if this is the case, then 
\begin{equation*}\label{equation.theorem}
\ipa{\frac{(\tilde\xi\cup \phi_H)\cap\eta}{2^{[F:\Q]}\cdot{\rm vol}(H)}}^2 = \frac{2^{\#\Sigma_D}L_{c_\xi}(1,\psi_{K})^2h^2
C(\udl k,\udl m)}{\varepsilon^{Dc_\xi}\xi_0^{-1}(\bar k_{0})M^{-\underline m}|c_\xi^2 D|^{\frac{1}{2}}}\cdot L^{\Sigma_{c_\xi}}(1/2,\Pi,\tilde\xi)\cdot\frac{\langle \Phi_H,\Phi_H\rangle}{\langle \Psi,\Psi\rangle}\cdot\frac{{\rm vol}(U_0(N))}{{\rm vol}(U_N)}\prod_{\mfp\mid p}\frac{\lambda_\mfp(\delta_\mfp(\indi_{H_\mfp}),J_0)}{{\rm vol}(H_\mfp)^2\lambda_\mfp(\phi_{0,\mfp},J_0)},
\end{equation*}
where  $\Sigma_{c_\xi}:=\{v\mid (N,c_\xi)\}$, $\Sigma_D=\{v\mid (N,D);\;\ord_v(c_\xi)=0\}$, $\Phi_H\in M_{\underline k}(U)$ for some $U\subseteq G(\A_F^\infty)$ is such that ${\rm ES}_\lambda(\Phi_H)=\phi_H$, the open $U_N=\prod_{v\nmid\infty}\mcO_{N,v}^\times$ with $\mcO_{N,\mfp}$ a $c_\xi$-admissible Eichler order, $\bar k_0=(k_0,\bar k_{0,p})\in T(\A_F)$ satisfies $\bar k_0^{-1}J_0\in G(F_\infty)_+\times w^{Dc_\xi}U_N$ analogously as above, $\phi_{0,\mfp}=\pi^{\mcO_{N,\mfp}^\times}$ and
    \begin{equation}\label{localfactlambda}
\lambda_\mfp(\phi_\mfp,J_0)=\int_{T(F_\mfp)}\xi_\mfp(t)\frac{\langle\pi_\mfp(t)\phi_{\mfp}^\lambda,\pi_\mfp(J_0)\phi_{\mfp}^\lambda\rangle_\mfp}{\langle\phi_{\mfp}^\lambda,\phi_{\mfp}^\lambda\rangle_\mfp} d^\times t,
    \end{equation}
being $\langle\;,\;\rangle_\mfp$ any $G(F_\mfp)$-invariant bilinear inner product, and $d^\times t$ any Haar measure. 
In \cite[proof of theorem 5.3]{HerMol1} (see also \cite[corollary 8.3]{HerMol1} and \cite[proposition 3.12]{CST}) it is computed that
\begin{equation}\label{PhiPhiint}
    \frac{\langle \Phi_H,\Phi_H\rangle}{\langle \Phi_{\udl\alpha},\Phi_{\udl\alpha}\rangle}\prod_{\mfp\mid p}\frac{\lambda_\mfp(\delta_\mfp(\indi_{H_\mfp}),J_0)}{{\rm vol}(H_\mfp)^2\lambda_\mfp(\phi_{0,\mfp},J_0)}=\prod_{\mfp\mid p}\frac{\xi_\mfp(\tilde k_{0,\mfp}\bar k_{0,\mfp}^{-1})}{{\rm vol}(\mfo_{K_\mfp}^\ti/\mfo_{F_\mfp}^\ti)^{2}}\frac{C(\pi_\mfp,\xi_\mfp)C(\pi_\mfp)}{L(1/2,\Pi_\mfp,\xi_\mfp)}
\left\{
    \begin{array}{ll}
       1,  &\mfp\nmid N,c_\xi;  \\
        L(1,\psi_{K_\mfp})^{-2}, &\mfp\mid c_\xi;  \\
        -\varepsilon_\mfp,&\mfp\mid N,\;\mfp\nmid D c_\xi;\\
        \frac{L(1/2,\Pi_\mfp,\xi_\mfp)}{2},&\mfp\mid (N,D),\;\mfp\nmid c_\xi;
    \end{array}\right.
\end{equation}
where $-\varepsilon_\mfp$ is the eigenvalue of the Atkin-Lehner operator $w_\mfp$.
The result then follows.
\end{proof}



\begin{remark}
Notice that the constants $C(\underline k,\underline m)$ and $C(\pi_\mfp)$ never vanish. Moreover, $C(\pi_\mfp,\xi_\mfp)$ only vanishes when 
\[
0=\varepsilon(\pi_\mfp,\xi_\mfp)=\left\{
\begin{array}{ll}
     \ipa{1-\xi_\mfp^{-1}\hat\chi_\mfp^0(\varpi_\mfp)}\ipa{1-\xi_\mfp\hat\chi_\mfp^0(\varpi_\mfp)}\ipa{1-\frac{\xi_\mfp^{-1}(\hat\chi_\mfp^0)^{-1}(\varpi_\mfp)}{q_\mfp}}^{-1}\ipa{1-\frac{\xi_\mfp(\hat\chi_\mfp^0)^{-1}(\varpi_\mfp)}{q_\mfp}}^{-1},& T(F_\mfp)\simeq F_\mfp^\times;  \\
     \ipa{1-\hat\chi_\mfp^0(\varpi_\mfp)^2}\ipa{1-\hat\chi_\mfp^0(\varpi_\mfp)^{-2}q_\mfp^{-2}}^{-1},& \mbox{$\mfp$ inert in $K$};\\
     \ipa{1-\xi_\mfp(\varpi_\mfp^{1/2})\hat\chi_\mfp^0(\varpi_\mfp)}\ipa{1-\xi_\mfp(\varpi_\mfp^{1/2})\hat\chi_\mfp^0(\varpi_\mfp)^{-1}q_\mfp^{-1}}^{-1},&\mfp\mid D.
\end{array}
\right.
\]
    Thus, the vanishing of $\mu_{\phi^p_\lambda}(\xi)$ depends on the vanishing of the L-function $L\left(\frac{1}{2},\Pi,\tilde\xi\right)$ or the epsilon factors $\varepsilon(\pi_\mfp,\xi_\mfp)$. 
    If $\udl k=\udl 0$, $V_\mfp={\rm St}(F_\mfp)(\varepsilon_\mfp)$ and $\xi_\mfp=\varepsilon_\mfp\mid_{T(F_\mfp)}$ (see remark \ref{remintMS}), then the epsilon factor $\varepsilon(\pi_\mfp,\xi_\mfp)$ vanishes. If in addition $T(F_\mfp)=F_\mfp^\ti$ (split case), then the L-function $L\left(\frac{1}{2},\Pi,\tilde\xi\right)$ may not vanish giving rise to an exceptional zero (see \cite{blanco2015anticyclotomic} or \cite{HerMol1}). If otherwise $T(F_\mfp)\neq F_\mfp^\ti$ (non-split case), then the L-function $L\left(\frac{1}{2},\Pi,\tilde\xi\right)$ also vanishes. Is in this last situation where we can construct plectic points.  
\end{remark}


\subsection{Continuous measures on the torus}

In previous sections we have constructed a distribution
\[
\mu_{\phi_\lambda^p}\in {\rm Dist}_{\udl k}(\mcG_T,\ovl\Q_p):=\Hom(C_{\udl k}(\mcG_T,\ovl\Q_p),\ovl\Q_p).
\]
In this section we aim to extend it to a continuous measure
\[
\mu_{\phi_\lambda^p}\in {\rm Meas}(\mcG_T,\C_p):=\Hom_{\rm cnt}(C(\mcG_T,\C_p),\C_p).
\]
Write also ${\rm Meas}(T(F_p),\C_p):=\Hom_{\rm cnt}(C_{c}(T(F_p),\C_p),\C_p)$.

Assume that we are in the setting of Proposition \ref{OCmodsymb}, namely, for all $\mfp\mid p$ we have $\alpha_\mfp\in\mfo_{\C_p}^\times$.
Then attached to our modular symbol $\phi_\lambda^p$, we have a unique overconvergent cohomology class
\[
 \hat\phi_\lambda^p\in H_\ast^u\ipa{G(F)_+,\mcA^{p\cup\infty}(\D_{\chi_p^{-2}}(\C_p)_{\udl\alpha})}^\lambda.
\]
On the other side, the formula used to define $\delta_\mfp$ in \eqref{eq:deltaSexplicit} extends to
\[
    \delta_p: C_{c}(T(F_p),\C_p) \longrightarrow {\rm Ind}_P^G(\hat\chi_p)\stackrel{\varphi_p}{\simeq}C_{\chi_p^{-2}}(\mcL_p,\C_p).
\]
Notice that, for any $f\in C(\mcG_T,\C_p)$, we have that $\Theta^\ast f\in H^0(T(F),C(T(\A_F),\C_p))$. Moreover, one can deduce $C_c(T(\A_F),\C_p)=\Ind_{T(F)_+}^{T(F)}C_c^0(T(\A_F^{\iy\cup p}),C_c(T(F_p),\C_p))$. Hence, with respect to the pairing \eqref{equation.pairingnoplus}
\[
\ipa{(\Theta^\ast f)\cap\eta}\cap \delta_p^\ast(\hat\phi_\lambda^p)\in\C_p,
\]
since
$\delta_p^\ast(\hat\phi_\lambda^p)\in H^u\ipa{G(F)_+,\mcA^{p\cup\infty}({\rm Meas}(T(F_p),\C_p))}^\lambda$.
The following result follows directly from the definitions:
\begin{theorem}\label{relOCpadicLfunct}
Assume that for all $\mfp\mid p$ we have $\alpha_\mfp\in\mfo_{\C_p}^\times$. 
Then the locally polynomial distribution $\mu_{\phi_\lambda^p}$ extends to a continuous measure defined by
\[
\int_{\mcG_T}fd\mu_{\phi_\lambda^p}=\ipa{(\Theta^\ast f)\cap\eta}\cap\delta_p^\ast(\hat\phi_\lambda^p),\qquad f\in C(\mcG_T,\C_p).
\]
\end{theorem}

\section{Hida families}\label{HidaFamilies}
As in most of the paper, we assume that our modular symbol $\phi_\lambda^p\in H^u(G(F)_+,\mcA^{p\cup\infty}(V_p,V(\underline{k})))^\lambda$ is ordinary, namely, $\alpha_\mfp=\hat\chi_\mfp(\varpi_\mfp)\inv=\varpi_\mfp^{\udl k/2}\hat\chi_\mfp^0(\varpi_\mfp)\inv\in\mfo_{\C_p}^\ti$ for all $\mfp\mid p$. We are convinced that our work can be generalized to the finite slope situation by working with locally analytic distributions. 

Let $\Lambda_F$ be the Iwasawa algebra associated with $\mfo_{F_p}^\ti$, and let 
${\bf k}_p:\mfo_{F_p}^\ti\rightarrow \Lambda_F^\ti$,
be the universal character. Recall that ${\bf k}_p$ is characterized by the following property: For any adic Noetherean $\Z_p$-algebra $A$, and any continuous character
$\chi_p:\mfo_{F_p}^\ti\rightarrow A^\ti$,
there exists a continuous morphism $\varphi_{\chi_p}:\Lambda_F\rightarrow A$ such that $\chi_p=\varphi_{\chi_p}\circ{\bf k}_p$.


Let $\udl{\bf a}=({\bf a}_\mfp)_\mfp$, where ${\bf a}_\mfp\in\Lambda_F^\ti$. In the next section, we will introduce the specialization $G(F_p)$-equivariant morphisms
\begin{equation}\label{eqspecialization}
    s_{\varphi_{\chi_p}}:\D_{{\bf k}_p^{-2}}(\Lambda_F)_{\udl{\bf a}}\longrightarrow \D_{\chi_p^{-2}}(A)_{\udl{\alpha}},\qquad {\udl \alpha}=\varphi_{\chi_p}(\udl{\bf a}),
\end{equation}
for any pair $(A,\chi_p)$ as above.

\subsection{Specialization morphisms}\label{spe-mor}

Let $\varphi:R\rightarrow A$ be a continuous surjective morphism between two adic complete $\Z_p$-algebras and assume that $R$ is also a complete local ring. Notice that, for any set $S$ of places above $p$ and any continuous character $\chi_S:\mfo_{F_S}^\ti=\prod_{\mfp\in S}\mfo_{F_\mfp}^\ti\rightarrow R^\ti$, we have a continuous morphism 
\[
\varphi_\ast: C_{\chi_S}(\mcL_S,R)\longrightarrow C_{\varphi\circ\chi_S}(\mcL_S,A);\qquad \varphi^\ast(f)=\varphi\circ f.
\]
Notice that $C_{\chi_S}(\mcL_S,R)$ is topologically generated by functions of the form 
\[
F=\prod_{\mfp\in S}(f_{U_\mfp}\;\mbox{ or }\;g_{U_\mfp}),\qquad f_{U_\mfp}(c,d)=\chi_\mfp(c)\cdot\indi_{U_\mfp}\ipa{\frac{d}{c}},\qquad g_{U_\mfp}(\varpi_\mfp c,d)=\chi_\mfp(d)\cdot\indi_{U_\mfp}\ipa{\frac{c}{d}},
\]
where $U_\mfp\subset \mfo_{F_\mfp}$ are open compact subgroups. Write $C^0_{\chi_S}(\mcL_S,R)\subseteq C_{\chi_S}(\mcL_S,R)$ (and analogously $C^0_{\varphi\circ\chi_S}(\mcL_S,A)$) for the $R$-module generated by the functions $F$ above. Notice that $C^0_{\chi_S}(\mcL_S,R)\simeq \bigotimes_{\mfp\in S}C^0(\PP^1(F_\mfp),R)$ is a free $R$-module, hence, the induced morphism $\varphi_\ast:C^0_{\chi_S}(\mcL_S,R)\rightarrow C^0_{\varphi\circ\chi_S}(\mcL_S,A)$ is surjective. Moreover, if $f\in C_{\varphi\circ\chi_S}(\mcL_S,A)$ is $f=\lim_i F_i$ with $F_i\in C^0_{\varphi\circ\chi_S}(\mcL_S,A)$ and $\tilde F_i\in C^0_{\chi_S}(\mcL_S,R)$ are preimages of $F_i$, then the sequence $\{\tilde F_i\}_i$ is Cauchy. Indeed, for $i,j>N$ big enough, 
\[
\tilde F_i-\tilde F_j\in\varphi^{-1}(J)^nC^0_{\chi_S}(\mcL_S,R)\subseteq \mfm^n C^0_{\chi_S}(\mcL_S,R),
\]
where $J$ and $\mfm$ are the ideals of definition of $A$ and $R$, respectively. This shows that any $f\in C_{\varphi\circ\chi_S}(\mcL_S,A)$ has preimage $F=\lim_i \tilde F_i\in C_{\chi_S}(\mcL_S,R)$ and $\varphi^\ast$ is then surjective. Notice that $\D_{\chi_S}(R)=\Hom_R\ipa{C^0_{\chi_S}(\mcL_S,R),R}$. Moreover, since $C^0_{\chi_S}(\mcL_S,R)$ is free, one has the exact sequence 
\[
0\longrightarrow \Hom_R\ipa{C^0_{\chi_S}(\mcL_S,R),\ker\varphi}\longrightarrow \D_{\chi_S}(R) \longrightarrow \Hom_R\ipa{C^0_{\chi_S}(\mcL_S,R),A}\longrightarrow 0.
\]
Also by the freeness of $C^0_{\chi_S}(\mathcal{L}_S,R)$ (and $C^0_{\varphi\circ\chi_S}(\mathcal{L}_S,A))$ we have
\[
\Hom_R\ipa{C^0_{\chi_S}(\mcL_S,R),A}=\Hom_A\ipa{C^0_{\varphi\circ\chi_S}(\mcL_S,A),A}=\D_{\varphi\circ\chi_S}(A).
\]
Hence one obtains, for any $\udl \alpha=(\alpha_\mfp)_{\mfp\in S}\in (R^\times)^S$, a surjective $G(F_S)$-equivariant morphism  
\[
s_\varphi:\D_{\chi_S}(R)_{\udl \alpha}\longrightarrow \D_{\varphi\circ\chi_S}(A)_{\varphi\ipa{\udl \alpha}},\qquad (s_\varphi\mu)(f):=\varphi\ipa{\mu(\varphi_\ast^{-1}(f))}.
\]
\begin{remark}\label{remonhomcont}
    In particular, the above argument shows that 
    \[
    \Hom_{{\rm cont},R}\ipa{C_{\chi_S}(\mcL_S,R),A}=\Hom_{{\rm cont},A}\ipa{C_{\varphi\circ\chi_S}(\mcL_S,A),A}=\D_{\varphi\circ\chi_S}(A).
    \]
\end{remark}
\begin{lemma}\label{measuresonI}
    Given a continuous morphism $\varphi:R\rightarrow A$ 
    \[
    \ipa{\ker\varphi}\oti_R\D_{\chi_S}(R)=\left\{\mu\in\D_{\chi_S}(R);\;\; \mu(f)\in \ker\varphi\;\mbox{ for all }f\in C_{\chi_S}(\mcL_S,R)\right\}=\ker(s_{\varphi}).
    \]
\end{lemma}
\begin{proof}
We have seen that $\ker(s_{\varphi})=\Hom_R\ipa{C^0_{\chi_S}(\mcL_S,R),\ker\varphi}$. Moreover, since $R$ is noetherian, $\ker\varphi$ is finitelly generated. The result follows easily from the freeness of $C^0_{\chi_S}(\mcL_S,R)$.
\end{proof}

Assume that $\mfp$ is such that $\varphi\circ\chi_\mfp=1$ and $\varphi(\alpha_\mfp)=1$ (hence $\varphi\circ\hat\chi_\mfp=1$). Then $A\subset C_{\varphi\circ\chi_\mfp}(\mcL_\mfp,A)=C(\PP^1(F_\mfp),A)$ is a $G(F_\mfp)$-invariant space. Thus, 
\[
\tilde \mcE_\mfp:=\varphi_\ast^{-1}(A)\subset C_{\chi_\mfp}(\mcL_\mfp,R)
\]
is a $G(F_\mfp)$-invariant space.
 Moreover, by equation \eqref{relPp} we have a natural $G(F_{S_1})$-equivariant morphism for any $S_1\subset S$ and any $f^{S_1}\in C_{\chi_{S\setminus S_1}}(\mcL_{S\setminus S_1},R)$
\begin{equation}\label{restmus}
    \bullet(f^{S_1}):\D_{\chi_{S}}(R)\longrightarrow \D_{\chi_{S_1}}(R);\qquad \int_{\mcL_{S_1}}f_{S_1} d\mu(f^{S_1}):=\int_{\mcL_{S}}\ipa{f^{S_1}\oti f_{S_1}}d\mu.
\end{equation}

\begin{definition}\label{defspeSt}
    Let $S_1\subseteq S$ be a subset of places $\mfp$ such that $\varphi\circ\hat\chi_\mfp=1$. We define inductively 
    \[
        \D^{\varphi}_{\chi_{S}}(R)^{S_1}=\left\{\mu\in \D_{\chi_{S}}(R);\;\mu(\tilde\mcE_\mfp)\subseteq (\ker\varphi)\oti_R \D^\varphi_{\chi_{S\setminus\mfp}}(R)^{S_1\setminus\mfp},\;\mbox{for all }\mfp\in S_1\right\}.
    \]
    It is clear that the corresponding subspace $\D^\varphi_{\chi_{S}}(R)^{S_1}_{\udl\alpha}\subseteq\D_{\chi_{S}}(R)_{\udl\alpha}$ is $G(F_S)$-invariant.
\end{definition}
Notice that the specialization morphism $s_\varphi$ provides a $G(F_S)$-equivariant morphism
\[
\D^\varphi_{\chi_{S}}(R)^{S_1}_{\udl\alpha}\longrightarrow\Hom_{{\rm cont},A}\ipa{{\rm Ind}_P^G(\varphi\circ\hat\chi_{S\setminus S_1})\oti_A\bigotimes_{\mfp\in S_1}\tno{St}_A(F_\mfp),A};\qquad \tno{St}_A(F_\mfp)=C(\PP^1(F_\mfp),A)/A.
\]
In fact, $\D^\varphi_{\chi_{S}}(R)^{\mfp}_{\udl\alpha}=s_\varphi^{-1}\ipa{\Hom_{{\rm cont},A}\ipa{{\rm Ind}_P^G(\varphi\circ\hat\chi_{S\setminus S_1})\oti_A\tno{St}_A(F_\mfp),A}}$, but this is not the case for larger $S_1$.

\subsection{Lifting the form to a Hida family}\label{locsystevsgroupcoho}

Notice that $\D_{\chi_p^{-2}}(\C_p)=\D_{\chi_p^{-2}}(\mfo_{\C_p})\oti_{\mfo_{\C_p}}\C_p$ because continuous distributions are those that are bounded. Hence,
by proposition \ref{OCmodsymb} and remark \ref{remonadmrepcoho}, our $\phi_\lambda^p$ 
lifts (up-to-constant) to an overconvergent modular symbol 
\[
\hat\phi_\lambda^p\in H_\ast^u\ipa{G(F)_+,\mcA^{p\cup\infty}(\D_{\chi_p^{-2}}(\mfo_{\C_p})_{\underline{\alpha}})}^\lambda.
\]
Let us consider $\varphi=\varphi_{\chi_p}:\Lambda_F\rightarrow\mfo_{\C_p}$, and assume that there exists $\underline{{\bf a}}=({\bf a}_\mfp)_\mfp$, with ${\bf a}_\mfp\in\Lambda_F^\ti$ such that $\varphi({\bf a}_\mfp)=\alpha_\mfp$. This defines an extension $\hat {\bf k}_p$ of the universal character ${\bf k}_p$. 
In this section we will discuss the existence of both $\udl{\bf a}=({\bf a}_\mfp)_\mfp$ specializing to $\udl\alpha$ and a class $\Phi_\lambda^p\in H_\ast^u(G(F)_+,\mcA^{p\cup\infty}(\D_{{\bf k}_p^{-2}}(\Lambda_F)_{\underline{\bf a}}))^\lambda$ lifting the overconvergent modular symbol $\hat\phi_\lambda^p$, where the specialization map is provided by \eqref{eqspecialization}. For this purpose, we will make use of local systems with values in $\Lambda_F$ and eigenvarieties constructed by means of them. 

In case $\chi_p=1$ is the trivial character and $\hat\phi_\lambda^p\in H_\ast^u(G(F)_+,\mcA^{p\cup\infty}(\D_{1}(\Z_p)_{\underline{\alpha}}))^\lambda$, let $S$ be any set of places above $p$ where $V_\mfp^{\Z_p}$ is Steinberg, namely, $V_\mfp^{\Z_p}\simeq \tno{St}_{\Z_p}(F_\mfp)(\varepsilon_\mfp)$ for some character $\varepsilon_\mfp(g)=(\pm1)^{\nu(\det(g))}$. Therefore, $\hat\phi_\lambda^p$ has a preimage in
\[
\hat\phi_\lambda^p\in H_\ast^u\ipa{G(F)_+,\mcA^{p\cup\iy}(\D^{\rm id}_{1}(\Z_p)_{\underline{\alpha}^\ast}^S(\varepsilon_S))}^\lambda, \qquad \varepsilon_S=\prod_{\mfp\in S}\varepsilon_\mfp,\qquad \underline{\alpha}^\ast=\ipa{\varepsilon_\mfp\bsm\varpi_\mfp&\\&1\esm\cdot{\alpha}_\mfp}_\mfp,
\]
where ${\rm id}:\Z_p\rightarrow \Z_p$ is the identity morphism. In this situation, we need to refine the cohomology space where $\Phi_\lambda^p$ lives to take its Steinberg reduction into account. 

In order to address technical issues associated with possible shrinkage of the weight space, we need to introduce some adic algebras. Notice that $1+p\mfo_{F_p}$ is a free $\Z_p$-module of rank $d=[F:\Q]$. Hence, for a given basis $\{e_1,\cdots,e_d\}\subset 1+p\mfo_{F_p}$, we have an isomorphism $\Z_p[H][[T_1,\cdots ,T_d]]\stackrel{\simeq}{\rightarrow}\Lambda_F$, where $H\subset\mfo_{F_p}^\ti$ is the torsion subgroup and $T_i+1$ is sent to $e_i$. For any $n\in\N$, we consider the adic $\Lambda_F$-algebra
$\Lambda_n=\Lambda_F\left\langle\frac{T_1}{p^n},\cdots,\frac{T_d}{p^n}\right\rangle$.
The $\C_p$-valued points of the formal spectrum of $\Lambda_n$ classify the following characters:
\[
{\rm Spf}(\Lambda_n)(\C_p)=\{\chi\in \Hom_{\rm cnt}(\mfo_{F_p}^\times,\C_p^\times); \;|\chi(e_1)-1|<p^{-n}\}.
\]
Thus, the algebras $\Lambda_n$ define a basis of neighborhoods for the trivial character, and $\varphi$ factors through $\Lambda_F\hookrightarrow\Lambda_n\stackrel{\varphi_n}{\rightarrow}\Z_p$. Since $\D_{{\bf k}_p^{-2}}(\Lambda_F)=\Hom_{\Lambda_F}(C^0_{{\bf k}_p^{-2}}(\mcL_S,\Lambda_F),\Lambda_F)$ (similarly for $\D_{{\bf k}_p^{-2}}(\Lambda_n)$) 
and $C^0_{{\bf k}_p^{-2}}(\mcL_S,\Lambda_n)=C^0_{{\bf k}_p^{-2}}(\mcL_S,\Lambda_F)\otimes_{\Lambda_F}\Lambda_n$, we have that $\D_{{\bf k}_p^{-2}}(\Lambda_F)\subseteq \D_{{\bf k}_p^{-2}}(\Lambda_n)$ and we can
define 
\[
\D_{{\bf k}_p^{-2}}(\Lambda_F)^S=\varinjlim_n\D^{\varphi_n}_{{\bf k}_p^{-2}}(\Lambda_{n})^S\cap \D_{{\bf k}_p^{-2}}(\Lambda_F).
\]
We can interpret $\D_{{\bf k}_p^{-2}}(\Lambda_F)^S$ as the space of the measures in $\D_{{\bf k}_p^{-2}}(\Lambda_F)$ lying in $\D^{\varphi}_{{\bf k}_p^{-2}}(\Lambda_{F})^S$ after possibly shrinking ${\rm Spf}(\Lambda_F)$.  
The specialization map $s_\varphi$ of \eqref{eqspecialization} provides a morphism
\begin{equation}\label{eq:specialization}
H_\ast^u\ipa{G(F)_+,\mcA^{p\cup\infty}(\D_{{\bf k}_p^{-2}}(\Lambda_F)_{\underline{\bf a}^\ast}^S(\varepsilon_S))}^\lambda\stackrel{s_{\varphi}}{\longrightarrow}H_\ast^u\ipa{G(F)_+,\mcA^{p\cup\infty}(\D^{\rm id}_{1}(\Z_p)_{\underline{\alpha}^\ast}^S(\varepsilon_S))}^\lambda,\qquad \underline{\bf a}^\ast=\ipa{\varepsilon_\mfp\bsm\varpi_\mfp&\\&1\esm\cdot{\bf a}_\mfp}_\mfp.
\end{equation}
We will dedicate the last part of the section to prove that, in this particular situation where $\chi_p=1$, $\hat\phi_\lambda^p$ lifts to an element in $H_\ast^u(G(F)_+,\mcA^{p\cup\infty}(\D_{{\bf k}_p^{-2}}(\Lambda_F)_{\underline{\bf a}^\ast}^S(\varepsilon_S)))^\lambda$.
By abuse of notation, we will consider $\Phi_\lambda^p$ as a Hida family either with coefficients in $\D_{{\bf k}_p^{-2}}(\Lambda_F)_{\underline{\bf a}^\ast}^S(\varepsilon_S)$ or in $\D_{{\bf k}_p^{-2}}(\Lambda_F)_{\underline{\bf a}}$.

\subsubsection{Local systems and group cohomology}

Given a compact subgroup $C\subseteq G(\A_F^\infty)$, we can construct the locally symmetric space
\[
Y_C:=G(F)_+\backslash G(F_\infty)_+\ti G(\A_F^\infty)/C_\infty C,
\]
where $C_\infty$ is the maximal compact subgroup of $G(F_\infty)_+$. Notice that, when $C$ is small enough, $Y_C$ is in correspondence with the set of complex points of a Shimura variety. 

Let $C_p:=C\cap G(F_p)$. Given a $C_p$-module $V$, we can define the local system 
\[
\mcV:=G(F)_+\backslash \ipa{G(F_\infty)_+\ti G(\A_F^\infty)\ti V}/C_\infty C\longrightarrow Y_C,
\]
where the left $G(F)_+$-action and right $C_\infty C$-action on $G(\A_F)\ti V$ is given by $$\gamma(g_\infty,g^\infty,v)(c_\infty,c)=(\gamma g_\infty c_\infty, \gamma g^\infty c,c_p\inv v),$$ being $c_p\in C_p$ the $p$-component of $c\in C$. 

A locally constant section of the local system $\mcV$ amounts to a function $s:G(\A_F^\infty)\rightarrow V$ such that $s(g^\infty)=c_ps(\gamma g^\infty c)$, for all $\gamma \in G(F)_+$ and $c\in C$. 
Thus, to provide such an $s$ is equivalent to provide an element
\[
\hat s\in H^0\ipa{G(F)_+,\mcA^{p\cup\infty}({\rm coInd}_{C_p}^{G(F_p)}V)^{C^p}},\qquad \hat s(g^p)(g_p):=s(g_p^{-1},g^p),
\]
where $C^p:=C\cap G(\A_F^{p\cup\infty})$. In fact, 
we can identify the cohomology of the sheaf of local sections of $\mcV$ with the group cohomology of $\mcA^{p\cup\infty}({\rm coInd}_{C_p}^{G(F_p)}V)^{C^p}$, namely,
\begin{equation}\label{relSCGC}
H^k(Y_C,\mcV)\simeq H^k\ipa{G(F)_+,\mcA^{p\cup\infty}({\rm coInd}_{C_p}^{G(F_p)}V)^{C^p}}    
\end{equation}
(see \cite[Lemma 1.2]{L-P}).
Indeed, the assignment $V \mapsto H^\ast\ipa{G(F)_+,\mcA^{p\cup\infty}({\rm coInd}_{C_p}^{G(F_p)}V)^{C^p}}$ defines an effaceable $\delta$-functor because, similarly as in remark \ref{remonadmrepcoho}, we have $\mcA^{p\cup\infty}({\rm coInd}_{C_p}^{G(F_p)}V)^{C^p}\simeq\bigoplus_{g_i\in {\rm Cl}(C)}{\rm coInd}_{\Gamma_{g_i}}^{G(F)_+}V$, with ${\rm Cl}(C)=G(F)_+\backslash G(\A_F^\infty)/C$ and $\Gamma_{g_i}:=g_i Cg_i^{-1}\cap G(F)_+$. Hence, it is enough to prove that the assignment $V\mapsto H^\ast(Y_C,\mcV)$ defines an effaceable $\delta$-functor as well. 

To show that it is a $\delta$-functor, it is enough to prove that the functor $V\mapsto \mcV$ is exact. Hence, it is enough to prove exactness at the level of stalks. Recall that we can write $Y_C=\bigsqcup_i \Gamma_{g_i}\backslash H$, where $H=G(F_\infty)/C_\infty$, and for a point $P\in Y_C$ corresponding to $x\in H$, its stalk $\mcV_P$ is $V^{\Gamma_{g_i}^x}$, where $\Gamma_{g_i}^x$ is the stabilizer of $x$ in $\Gamma_{g_i}$. Since $\Gamma_{g_i}^x$ lies in the center of $\Gamma_{g_i}$ which is trivial, the claim follows.

To prove that it is effaceable, notice that we have a monomorphism $V\mapsto {\rm coInd}_1^{C_p}V$. Thus we have to show that the local system $\mcM$ associated with $M={\rm coInd}_1^{C_p}V$ is acyclic. Write $Y_{C^p}:=G(F)_+\backslash G(F_\infty)_+\times G(\A_F^\infty)/C_\infty C^p$ and let $\pi:Y_{C^p}\rightarrow Y_C$ be the natural projection. One can describe the sheaf associated with $\mcM$ as 
\[
\mcM(U)=\{s:\pi^{-1}(U)\longrightarrow M;\;s(u c_p)=c_p^{-1}s(u),\mbox{ for all }c_p\in C_p\}.
\]
Since $M={\rm coInd}_1^{C_p}V$, we directly deduce that $\mcM=\pi_\ast(\tilde V)$, where $\tilde V$ the constant sheaf on $Y_{C^p}$ associated with $V$. Since the fibres of $\pi$ are discrete, the higher direct images $R^\bullet(\pi_\ast)(\tilde V)$ vanish and we get $H^\bullet (Y_{C^p},\tilde V)=H^\bullet(Y_C,\mcM)$.
Since $Y_{C^p}=\sqcup_{G(F_p)/G(F)_+}\ipa{G(F_\infty)_+/C_\infty}\times \ipa{G(\A_F^{\infty\cup p})/C^p}$ is a disjoint union of contractible spaces, $H^k(Y_C,\mcM)=0$ for $k>0$ and the claim \eqref{relSCGC} follows.

\subsubsection{Families and eigenvarieties}

Write $\mcD(\mfo_{F_p},\Lambda_F)$ for the local system associated with $D(\mfo_{F_p},\Lambda_F)$ with the natural $\mcI_p$-action provided by ${\bf k}_p$. The classical strategy to construct the ordinary eigenvariety is to consider the subspace of $H^k(Y_C,\mcD(\mfo_{F_p},\Lambda_F))$ where $U_\mfp$ acts as invertible operator, being $C\subset G(\A_F^{\infty})$ a compact open subgroup such that $C_p=\mcI_p$. 
A connected component of the eigenvariety passing through $\pi$ provides a system of eigenvalues for the Hecke operators in $\Lambda_F$. 
Once we have such a system of eigenvalues, and in particular eigenvalues ${\bf a}_\mfp$ for the $U_\mfp$-operators, 
one can make use of the étaleness of the eigenvariety to prove the existence of eigenvectors in $H^u(Y_C,\mcD(\mfo_{F_p},\Lambda_F))^\lambda$.
(see \cite[theorem 2.14]{BDJ} for the case $G=\PGL_2$, and notice that techniques of \cite[\S 2.5]{BDJ} can be extended to general $G$). 
By equation \eqref{relSCGC}
\[
H^u(Y_C,\mcD(\mfo_{F_p},\Lambda_F))^{\lambda,U_\mfp={\bf a}_\mfp}=H^u\ipa{G(F)_+,\mcA^{p\cup\infty}({\rm coInd}_{\mcI_p}^{G(F_p)}D(\mfo_{F_p},\Lambda_F))^{C^p}}^{\lambda,U_\mfp={\bf a}_\mfp}.
\]
Moreover, by propositions \ref{propDLDO} and \ref{propKoszul}, we have the Koszul resolution $0\rightarrow \D_{{\bf k}_p^{-2}}(\Lambda_F)_{{\bf\underline{a}}}\stackrel{\varphi}{\rightarrow}K^\bullet$
\[
K^\bullet:{\rm coInd}_{\mcI_p}^{G(F_S)}D(\mfo_{F_p},\Lambda_F)\stackrel{\oplus_\mfp(U_\mfp-\alpha_\mfp)}{\longrightarrow}\left({\rm coInd}_{\mcI_p}^{G(F_p)}D(\mfo_{F_p},\Lambda_F)\right)^{g_p}\longrightarrow\cdots\longrightarrow \left({\rm coInd}_{\mcI_p}^{G(F_p)}D(\mfo_{F_p},\Lambda_F)\right)^m\longrightarrow 0
\]
Thus, we have convergence of the spectral sequence $H^s\left(H^r(G(F)_+,(K^\bullet)^{C^p})^\lambda\right)\Rightarrow H^{r+s}(G(F)_+,\D_{{\bf k}_p^{-2}}(\Lambda_F)_{{\bf\underline{a}}}^{C^p})^\lambda$. If we denote by $M_{\Phi_\lambda^p}$ the subspace of $M$ where Hecke operators associated with $G(\A_F^{p\cup\infty})/C^p$ act as the eigenvalues of $\Phi_\lambda^p$, then $H^r(G(F)_+,(K^\bullet)^{C^p})_{\Phi_\lambda^p}^\lambda=0$ for $r<u$. Indeed, $u$ is the minimum degree where these eigenvalues appear in $H^r(Y_C,\mcD(\mfo_{F_p},\Lambda_F))^{\lambda}$ and $K^\bullet$ is made up of powers of ${\rm coInd}_{\mcI_p}^{G(F_p)}D(\mfo_{F_p},\Lambda_F)$.
We obtain
\[
H^{u}(G(F)_+,\D_{{\bf k}_p^{-2}}(\Lambda_F)_{{\bf\underline{a}}}^{C^p})_{\Phi_\lambda^p}^\lambda=H^0\left(H^u(G(F)_+,(K^\bullet)^{C^p})^\lambda_{\Phi_\lambda^p}\right)=H^u\ipa{G(F)_+,\mcA^{p\cup\infty}({\rm coInd}_{\mcI_p}^{G(F_p)}D(\mfo_{F_p},\Lambda_F))^{C^p}}^{\lambda,U_\mfp={\bf a}_\mfp}_{\Phi_\lambda^p}.
\]
Thus, the existence of a classical family $\Phi_\lambda^p\in H^u(Y_C,\mcD_{{\bf k}_p}(\mfo_{F_p},\Lambda_F))^{\lambda,U_\mfp={\bf a}_\mfp}$ ensures the existence of a lift $\Phi_\lambda^p\in H_\ast^u(G(F)_+,\mcA^{p\cup\infty}(\D_{{\bf k}_p^{-2}}(\Lambda_F)_{{\bf\underline{a}}}))^{\lambda}$. 

Assume that $\chi_p=1$ and $\hat\phi_\lambda^p\in H_\ast^u(G(F)_+,\mcA^{p\cup\infty}(\D^{\rm id}_{1}(\Z_p)_{\underline{\alpha}^\ast}^S(\varepsilon_S)))^\lambda$.
It remains to prove that the family $\Phi_\lambda^p$ has a preimage in $H_\ast^u(G(F)_+,\mcA^{p\cup\infty}(\D_{{\bf k}_p^{-2}}(\Lambda_F)_{\underline{\bf a}^\ast}^S(\varepsilon_S)))^{\lambda}$. Notice that $\Lambda_F=\widehat\bigotimes_{\mfp\mid p}\Lambda_\mfp$, where $\Lambda_\mfp$ is the Iwasawa algebra associated with $\mfo_{F_\mfp}^\times$, and we have specialization maps $\varphi_\mfp:\Lambda_F\rightarrow \Lambda_{p\setminus\mfp}:=\widehat\bigotimes_{\mfq\neq\mfp}\Lambda_\mfq$ associated to the natural characters $\mfo_F^\times\rightarrow\Lambda_{p\setminus\mfp}^\times$.
By \cite[Lemma 7.2]{BDJ}, for every $\mfp\in S$, the family $\Phi_\lambda^p$ specializes at $\varphi_\mfp$ to 
\[
\varphi_\mfp(\Phi_\lambda^p)\in H_\ast^u\ipa{G(F)_+,\mcA^{p\cup\infty}(\tno{St}_{\Z_p}(F_\mfp),\D_{{\bf k}_{p\setminus\mfp}^{-2}}(\Lambda_{p\setminus\mfp})_{\underline{\bf a}_{p\setminus\mfp}^\ast})(\varepsilon_S)}^{\lambda},\qquad \underline{\bf a}_{p\setminus\mfp}^\ast=\ipa{\varepsilon_\mfq\bsm\varpi_\mfq&\\&1\esm\cdot\varphi_\mfp({\bf a}_\mfq)}_{\mfq\neq\mfp},
\]
after possibly shrinking $\Lambda_{p\setminus\mfp}$. 
Hence, \cite[lemma 7.2]{BDJ} and lemma \ref{measuresonI} imply that $\Phi_\lambda^p$ has values in 
\[
D_S:=\left\{\mu\in \D_{{\bf k}_p^{-2}}(\Lambda_F);\quad \mu(\tilde\mcE_\mfp)\subset \ker(\varphi_\mfp)\oti_{\Lambda_F}\D_{{\bf k}_{p\setminus\mfp}^{-2}}(\Lambda_n)\;\mbox{for all $\mfp\in S$ and some $n\in\N$}\right\}.
\]
The following lemma shows that $D_S\subseteq \D_{{\bf k}_p^{-2}}(\Lambda_F)^S$ and, thus, ensures the existence of our desired family $\Phi_\lambda^p\in H_\ast^u(G(F)_+,\mcA^{p\cup\infty}(\D_{{\bf k}_p^{-2}}(\Lambda_F)_{\underline{\bf a}^\ast}^S(\varepsilon_S)))^{\lambda}$ specializing to $\phi_\lambda^p$:
\begin{lemma}
    For any subset $S'\subseteq S$, we have 
    $\bigcap_{\mfp\in S'}\ker(\varphi_\mfp)=\prod_{\mfp\in S'}\ker(\varphi_\mfp)\subseteq \ipa{\ker \varphi}^{\#S'}$.   
\end{lemma}
\begin{proof}
    It is clear that $\ker(\varphi_\mfp)\subseteq \ker \varphi$. 
    Moreover, $\bigcap_{\mfp\in S'}\ker(\varphi_\mfp)\supseteq\prod_{\mfp\in S'}\ker(\varphi_\mfp)$. We will prove the remaining inclusion by induction: Notice that there exists a morphism $\imath_\mfp:\Lambda_{p\setminus\mfp}\hookrightarrow\Lambda_F$ such that $\varphi_\mfp\circ\imath_\mfp={\rm id}$. If we assume that $S'=\{\mfp,\mfq\}$, the product $\Lambda_{p\setminus\mfp}\Lambda_{p\setminus\mfq}$ topologically generates $\Lambda_F$. Thus, any $x\in \ker(\varphi_\mfp)\cap \ker(\varphi_\mfq)$ is given by a convergent series
    $x=\sum_i \imath_\mfp(a_{i})\imath_\mfq(b_{i})$, where  $a_{i}\in\Lambda_{p\setminus\mfp}$ and $b_{i}\in\Lambda_{p\setminus\mfq}$. We obtain
    \[
    \ker(\varphi_\mfq)\cdot \ker(\varphi_\mfp)\ni\sum_i\ipa{\imath_{\mfp}(a_{i})-\imath_{\mfq}\varphi_{\mfq}\imath_{\mfp}(a_{i})}\ipa{\imath_{\mfq}(b_{i})-\imath_{\mfp}\varphi_{\mfp}\imath_{\mfq}(b_{i})}=x-\imath_\mfp \varphi_\mfp(x)-\imath_\mfq \varphi_\mfq(x)+\imath_\mfp \varphi_\mfp\imath_\mfq \varphi_\mfq(x)=x,
    \]
    since $\imath_\mfp \varphi_\mfp\imath_\mfq \varphi_\mfq=\imath_\mfq \varphi_\mfq\imath_\mfp \varphi_\mfp$. This shows the claim for $\#S'=2$. If we assume that the result is true for any smaller subgroup of $S'$, for any $\mfq\in S'$ we have that 
    \[
    \bigcap_{\mfp\in S'}\ker(\varphi_\mfp)=\ipa{\prod_{\mfp\in S'\setminus\mfq}\ker(\varphi_\mfp)}\cap\ker(\varphi_\mfq)=\bigcup_{\mfp\in S'\setminus\mfq}\ipa{\ker(\varphi_\mfq)\cap\ker(\varphi_\mfp)}\cdot\ipa{\prod_{\mathfrak{r}\in S'\setminus\{\mfq,\mfp\}}\ker(\varphi_{\mathfrak{r}})}=\prod_{\mfp\in S'}\ker(\varphi_\mfp),
    \]
    where the second equality follows from the fact that $\ker(\varphi_\mfq)$ is prime. Thus, we have proved the result.
\end{proof}

\subsection{Extensions of the Steinberg representation}\label{subsection.steinbergrep}

If our local representation $\pi_\mfp$ is Steinberg, then $\pi_\mfp$ is the quotient of the induced representation ${\rm Ind}_P^G1$ modulo constant functions. Since $G(F_\mfp)/P\simeq\PP^1(F_\mfp)$, we can give the simpler description  $V_\mfp^\Z=\tno{St}_\Z(F_\mfp)=C^0(\PP^1(F_\mfp),\Z)/\Z$,
where $C^0(\PP^1(F_\mfp),\Z)$ is the set of locally constant $\Z$-valued functions of the projective line, with the natural action of $G(F_\mfp)=\PGL_2(F_\mfp)$ induced by 
fractional linear transformations. In this section we will extend these objects to arbitrary coefficients. 
\newcommand{\ddivz}{\Delta^0}
\newcommand{\ddivv}{\Delta}

Denote by $\tno{Cov}(X)$ the poset of open coverings of a topological space $X$ ordered by refinement. For any topological $\Z_p$-algebra $A$ and $A$-module $M$, we write  $\tno{St}_{M}:=C(\PP^1(F_\mfp),M)/M$. Then
we have a natural morphism
\begin{equation}\label{morextSt2}
   \lambda_{\tno{unv}}:\Hom_{{\rm cont},A}\ipa{\bigotimes_{\mfp\in S}\tno{St}_{A}(F_\mfp),A}\longrightarrow \Hom_{{\rm cont},A}\ipa{\bigotimes_{\mfp\in S}\tno{St}_{M},\widehat{\bigotimes_{\mfp\in S}M}},
\end{equation}
where the tensor products are as $A$-modules, $\widehat{(\cdot)}$ stands for the completion and, for any choice of $x_{U_\mfp}\in U_\mfp$,
\[
\lambda_{\tno{unv}}(\psi)\ipa{\bigotimes_{\mfp\in S}f_\mfp}:=\lim_{\mcU_\mfp\in\tno{Cov}(\PP^1(F_\mfp))}\sum_{U_\mfp\in\mcU_\mfp}\bigotimes_{\mfp\in S}f(x_{U_\mfp})\cdot{\psi\ipa{\bigotimes_{\mfp\in S}\indi_{U_\mfp}}}.
\]

For any continuous character $\ell:F_\mfp^\times\rightarrow M$, we can define the extension of $\tno{St}_{M}$:
\begin{equation}\label{defextSt}
    \tno{St}_{M}\stackrel{f\mapsto (f,0)}{\hookrightarrow}\mcE(\ell):=\left\{(\phi,y)\in C(\GL_2(F_\mfp),M)\ti A:\phi\left(\bbm s & x \\  & t \ebm g\right)=y\ell(t)+\phi(g) \right\}/(M,0).
\end{equation}

As in \S \ref{spe-mor}, let $R$ and $A$ be complete $\Z_p$-algebras and let $\varphi:R\rightarrow A$ be a continuous surjective morphism. Let $S$ be a finite set of places above $p$, let ${\udl \alpha}\in (R^{\ti})^S$ and let $\chi_S:\mfo_{F_S}^\ti\rightarrow R^\times$ be a continuous character. For a subset $S_1\subset S$ of places $\mfp$ such that $\varphi\circ\chi_\mfp=1$ and $\varphi(\alpha_\mfp)=1$, we have constructed the $G(F_S)$-invariant subspace $\D^\varphi_{\chi_S}(R)_{\udl\alpha}^{S_1}\subseteq \D_{\chi_S}(R)_{\udl\alpha}$.

Let $\hat\chi_{\mfp}:F_\mfp^\ti\ra R^\ti$ be the extension of $\chi_\mfp$ provided by $\alpha_\mfp$. Since $\varphi(\alpha_\mfp)=1$, we define the group morphism: 
\begin{equation}\label{defeqell}
    \ell_{\alpha_\mfp}:F_\mfp^\ti\longrightarrow I/I^2;\qquad \alpha\mapsto (\hat\chi_{\mfp}(\alpha)^{-1}-1)+I^2,
\end{equation}
where $I=\ker\varphi$.
Hence, we can define the $A$-module extension $\tno{St}_{I/I^2}\stackrel{\iota}{\hookrightarrow}\mcE(2\ell_{\alpha_\mfp})$.
\begin{lemma}
    For any $\mfp\in S_1$, we have a $G(F_\mfp)$-equivariant isomorphism of $A$-modules
    \[
    \kappa_\mfp:\mcE(2\ell_{\alpha_\mfp})\stackrel{\simeq}{\longrightarrow}\varphi^{(2)}_\ast(\tilde \mcE_\mfp)/(I/I^2),\qquad \kappa_\mfp(\phi,y)(c,d) :=\tilde y+\phi\bbm a&b\\c&d\ebm,\qquad \bbm a&b\\c&d\ebm\in \GL_2(\mfo_{F_\mfp}),
\]
    where $\tilde y\in R/I^2$ is any preimage of $y$ and $\varphi^{(2)}:R\rightarrow R/I^2$ is the natural projection.
\end{lemma}
\begin{proof}
       Notice that the above expression does not depend on the class of $(\phi,y)$ in $\mcE(2\ell_{\alpha_\mfp})$  and  does not depend on the choice $\tilde y$. Moreover, $\kappa$ is well defined since for any other $\bsm a'&b'\\\lambda c&\lambda d\esm\in \GL_2(\mfo_{F_\mfp})$ there exist $\bsm t&x\\&\lambda\esm\in \GL_2(\mfo_{F_\mfp})$ such that $\bsm a'&b'\\\lambda c&\lambda d\esm=\bsm t&x\\&\lambda\esm\bsm a&b\\c&d\esm$, 
hence,
\[
\tilde y+\phi\bbm a'& b'\\\lambda c&\lambda d\ebm=\tilde y+\tilde y2\ell_{\alpha_\mfp}(\lambda)+\phi\bbm a&b\\c&d\ebm=\tilde y\varphi^{(2)}\ipa{\chi_\mfp(\lambda)^{-2}}+\phi\bbm a&b\\c&d\ebm= \varphi^{(2)}\circ\chi_\mfp(\lambda)^{-2}\ipa{\tilde y+\phi\bbm a&b\\c&d\ebm}.
\]
Moreover, $\kappa_\mfp$ is $G(F_\mfp)$-equivariant since, 
 by equation \eqref{actiononC}, if $x\in F_\mfp^\ti$ is such that $x\inv(c,d)g\in \mcL_\mfp$,
\begin{eqnarray*}
    (g\kappa_\mfp(\phi,y))(c,d)&=&\varphi^{(2)}\circ\hat\chi_\mfp(x)^{-2}\cdot \varphi^{(2)}\circ\hat\chi_\mfp(\det g)\cdot \kappa(\phi,y)(x\inv(c,d)g)\\
    &=&\varphi^{(2)}\circ\hat\chi_\mfp(x)^{-2}\cdot \varphi^{(2)}\circ\hat\chi_\mfp(\det g)\cdot\ipa{\tilde y+\phi\left(\bsm s&t\\&x^{-1}\esm\bsm a&b\\c&d\esm g\right)}\\
    &=&\varphi^{(2)}\circ\hat\chi_\mfp(\det g)\cdot \ipa{\tilde y+\phi\left(\bsm a&b\\c&d\esm g\right)}=\varphi^{(2)}\circ\hat\chi_\mfp(\det g)\cdot \kappa_\mfp(g(\phi,y))(c,d),
\end{eqnarray*}
where $\bsm s&t\\&x^{-1}\esm\in P$ is so that $\bsm s&t\\&x^{-1}\esm\bsm a&b\\c&d\esm g\in G(\mfo_{F_\mfp})$.
Since $\varphi\ipa{\hat\chi_\mfp(\det g)}=1$, we obtain
$(g\kappa_\mfp(\phi,y))\equiv \kappa_\mfp(g(\phi,y))$ modulo $I/I^2$. Bijectivity is obvious, hence, the result follows. 
\end{proof}

\begin{proposition}\label{D-EforgenS}
If we write $r=\#S_1$, then we have a $G(F_S)$-equivariant morphism 
\[
r_{S_1}:\D^\varphi_{\chi^{-2}_S}(R)^{S_1}_{\udl \alpha}\longrightarrow \Hom_{{\rm cont},A}\ipa{{\rm Ind}_P^G(\varphi\circ\hat\chi_{S\setminus S_1})\oti_A\bigotimes_{\mfp\in S_1}\mcE(2\ell_{\alpha_\mfp}),I^r/I^{r+1}},
\]
where, if we write $\varphi^{(r+1)}:R\rightarrow R/I^{r+1}$ for the natural projection, it is provided by
\[
r_{S_1}(\mu)\ipa{\psi_{S\setminus S_1}f_{S\setminus S_1}\oti\bigotimes_{\mfp\in S_1} (\phi_{\mfp},y_\mfp)}=\int_{\mcL_S}\ipa{\widetilde{f_{S\setminus S_1}}\cdot\prod_{\mfp\in S_1} \widetilde{\kappa_\mfp(\phi_\mfp,y_\mfp)}}ds_{\varphi^{(r+1)}}\mu,
\]
where $\widetilde{f_{S\setminus S_1}}\in C_{\varphi^{(r+1)}\circ\chi_{S\setminus S_1}}(\mcL_{S\setminus S_1},R/I^{r+1})$ is any preimage of $f_{S\setminus S_1}$ and $\widetilde{\kappa_\mfp(\phi_\mfp,y_\mfp)}\in \varphi^{(r+1)}_\ast\tilde\mcE_\mfp$ is any preimage of $\kappa_\mfp(\phi_\mfp,y_\mfp)$.
Moreover, it fits in the commutative diagram
\[\small
\xymatrix{
\D^\varphi_{\chi^{-2}_S}(R)^{S_1}_{\udl \alpha}\ar[r]^{r_{S_1}\qquad\qquad}\ar[d]^{s_\varphi}&\Hom_{{\rm cont},A}\ipa{{\rm Ind}_P^G(\varphi\circ\hat\chi_{S\setminus S_1})\oti_A\bigotimes_{\mfp\in S}\mcE(2\ell_{\alpha_\mfp}),I^r/I^{r+1}}\ar[d]^{\iota^\ast}\\
\Hom_{{\rm cont},A}\ipa{{\rm Ind}_P^G(\varphi\circ\hat\chi_{S\setminus S_1})\oti_A\bigotimes_{\mfp\in S}{\rm St}_A,A}\ar[r]^{m_\ast\circ\lambda_{\rm unv}\qquad}&\Hom_{{\rm cont},A}\ipa{{\rm Ind}_P^G(\varphi\circ\hat\chi_{S\setminus S_1})\oti_A\bigotimes_{\mfp\in S}\tno{St}_{I/I^2},I^r/I^{r+1}}
}
\]
where $m:\bigotimes_{\mfp\in S}I/I^2\rightarrow I^r/I^{r+1}$ is the natural morphism.
\end{proposition}
\begin{proof}
    We will prove the result by induction on $r$: If $r=0$ the result is obvious. Assume it is true for $r-1$. By definition, given $\mfp\in S_1$ and $e\in\tilde\mcE_\mfp$, we have
  $\mu(e)\in I\oti_R\D^{S_1\setminus\mfp}_{\chi_{S\setminus\mfp}^{-2}}(R)_{\udl\alpha^\mfp}$, where $\udl\alpha^\mfp=(\alpha_\mfq)_{\mfq\in S\setminus\mfp}$. Thus, we obtain applying induction hypothesis:
    \begin{eqnarray*}
        \D^\varphi_{\chi^{-2}_S}(R)^{S_1}_{\udl \alpha}&\longrightarrow&\Hom_{\mbox{\tiny cont, $R$}}\ipa{\tilde\mcE_\mfp,I\oti_R\D_{\chi_{S\setminus\mfp}^{-2}}(R)^{S_1\setminus\mfp}_{\udl\alpha^\mfp}}\stackrel{r_{S_1\setminus\mfp}}{\longrightarrow} \Hom_{\mbox{\tiny cont, $R$}}\ipa{\tilde\mcE_\mfp,I\oti_R\Hom_{\mbox{\tiny cont, $A$}}\ipa{\bigotimes_{\mfq\in S\setminus\mfp}\mcE(2\ell_{\alpha_\mfq}),I^{r-1}/I^{r}}}\\
        &\stackrel{\simeq}{\longrightarrow}&\Hom_{\mbox{\tiny cont, $R$}}\ipa{\tilde\mcE_\mfp,I/I^2\oti_A\Hom_{\mbox{\tiny cont, $A$}}\ipa{\bigotimes_{\mfq\in S\setminus\mfp}\mcE(2\ell_{\alpha_\mfq}),I^{r-1}/I^{r}}}\\
        &\stackrel{\simeq}{\longrightarrow}&\Hom_{\mbox{\tiny cont, $R$}}\ipa{\varphi^{(2)}_\ast(\tilde\mcE_\mfp),I/I^2\oti_A\Hom_{\mbox{\tiny cont, $A$}}\ipa{\bigotimes_{\mfq\in S\setminus\mfp}\mcE(2\ell_{\alpha_\mfq}),I^{r-1}/I^{r}}}\\
        &\stackrel{\simeq}{\longrightarrow}&\Hom_{\mbox{\tiny cont, $A$}}\ipa{\mcE(2\ell_{\alpha_\mfp}),I/I^2\oti_A\Hom_{\mbox{\tiny cont, $A$}}\ipa{\bigotimes_{\mfq\in S\setminus\mfp}\mcE(2\ell_{\alpha_\mfq}),I^{r-1}/I^{r}}}\\
        &\longrightarrow& \Hom_{\mbox{\tiny cont, $A$}}\ipa{\bigotimes_{\mfp\in S}\mcE(2\ell_{\alpha_\mfp}),I^r/I^{r+1}}
    \end{eqnarray*}
    where the forth arrow follows from remark \ref{remonhomcont}. This shows the existence of $r_{S_1}$ and its precise description.

    Finally, for the commutativity of the diagram, notice that given for every $\mfp\in S_1$ and $h_\mfp\in C(\PP^1(F_\mfp),I/I^2)$
\[
h_\mfp=\lim_{\mcU_\mfp\in\tno{Cov}(\PP^1(F_\mfp))}\sum_{U_\mfp\in\mcU_\mfp}m_{U_\mfp}\cdot\indi_{U_\mfp}, 
\qquad m_{U_\mfp}\in I/I^2.
\] 
Hence, if we choose $\tilde m_{U_\mfp}\in I/I^{r+1}$ preimages of $m_{U_\mfp}$, by remark \ref{remonhomcont}
\begin{eqnarray*}
    r_{S_1} \ipa{\psi_{S\setminus S_1}f_{S\setminus S_1}\oti\bigotimes_{\mfp\in S_1} (h_{\mfp},0)}&=&\lim_{\mcU_\mfp\in\tno{Cov}(\PP^1(F_\mfp))}\sum_{U_\mfp\in\mcU_\mfp}\varphi^{(r+1)}\ipa{\prod_{\mfp\in S_1}\tilde m_{U_\mfp}}\int_{\mcL_S}\ipa{f_{S\setminus S_1}\cdot\prod_{\mfp\in S_1} \indi_{U_\mfp}}ds_{\varphi}\mu\\
    &=&m_\ast \lambda_{\rm unv}(s_\varphi\mu)\ipa{\psi_{S\setminus S_1}f_{S\setminus S_1}\oti\bigotimes_{\mfp\in S_1} (h_{\mfp},0)},
\end{eqnarray*}
and the result follows.
    
\end{proof}

\subsection{Extensions and divisors}\label{ExandDiv}

Let us consider  $\uhp_\mfp=K_\mfp\setminus F_\mfp$ the $p$-adic upper half plane.
Let $\Delta_\mfp:=\tno{Div}(\uhp_\mfp)$ and $\Delta_\mfp^0:=\tno{Div}^0(\uhp_\mfp)\subset \Delta_\mfp$ be the set of divisors and degree zero divisors. 
Notice that the multiplicative integral provides a morphism 
\begin{equation}\label{definition.multiplicativeintegral}
\Hom(\tno{St}_{\Z}(F_\mfp),\Z) \longrightarrow \Hom(\ddivz_\mfp,K_\mfp^\ti);\quad  \psi\longmapsto\ipa{z_2-z_1\mapsto \mint_{{\scriptscriptstyle \PP^1(F_\mfp)}} \frac{x-z_2}{x-z_1}d\mu_{\psi}(x):=\lim_{\mcU\in\tno{Cov}(\PP^1(F_\mfp))}\prod_{U\in\mcU}\ipa{\frac{x_U-z_2}{x_U-z_1}}^{\psi(\indi_U)}},
\end{equation}
where each $x_U\in U$. 
In \cite{guitart2017automorphic} this multiplicative integral is described alternatively as 
\[
\mint_{{\scriptscriptstyle \PP^1(F_\mfp)}} \frac{x-z_2}{x-z_1}d\mu_{\psi}(x)=\ipa{{\rm ev_\mfp^\ast}\circ\lambda_{\rm unv}}(z_2-z_1)
\]
where $\lambda_{\rm unv}$ is the morphism described in \eqref{morextSt2}, and ${\rm ev_\mfp^\ast}:\Hom(\tno{St}_{K_\mfp^\ti},K_{\mfp}^\ti)\rightarrow \Hom(\ddivv_\mfp^0,K_{\mfp}^\ti)$ is the pullback provided by the morphism of the commutative diagram 
\begin{equation}\label{commdiagramevuniv}
\begin{tikzcd}
0\ar[r]&\ddivv_\mfp^0\ar[r]\ar[d,"\tno{ev}_\mfp"]&\ddivv_\mfp\ar[r]\ar[d,"\tno{ev}_\mfp"]&\Z\ar[r]\ar[d]&0\\
0\ar[r]&\tno{St}_{K_\mfp^\ti}\ar[r,"\imath_{\rm St}"]&\mcE_{K_\mfp^\ti}\ar[r]&\Z_p\ar[r]&0
\end{tikzcd}
\end{equation}
where $\mcE_{K_\mfp^\ti}=\mcE(F_\mfp^\ti\hookrightarrow K_\mfp^\ti)$ and 
\begin{equation}\label{defev}
    \tno{ev}(z):=(\phi_z,1),\qquad \phi_z\bbm a&b\\c&d\ebm:=cz+d\in K_\mfp^\ti.
\end{equation}

Let $R$ be a complete $\Z_p$-algebra endowed with a continuous $\Z_p$-algebra morphism $\varphi: R\rightarrow \Z_p$. Let $\chi_p:\mfo_{F_p}^\ti\rightarrow R^\ti$ be a continuous character such that $\varphi\circ\chi_p=1$, and fix ${\udl \alpha}=(\alpha_\mfp)_{\mfp\mid p}$, where $\alpha_\mfp\in R^\ti$, providing an extension $\hat\chi_p$ of $\chi_p$ to $F_p^\ti$. 
In the following sections $R$ will be the Iwasawa algebra $\Lambda_F$, $\chi_p$ the universal character ${\bf k}_p$, and $\varphi=\varphi_1$ for the trivial character $1:\Lambda_F\rightarrow\Z_p$, but in this section we will work with this more general scenario.
Let $S$ be a set of primes above $\mfp$ such that $\varphi(\alpha_\mfp)=1$ and write $I=\ker(\varphi)$. Write $\Delta_{F_\mfp}\subset\Delta_\mfp$ for the subgroup of $\Gal(K_\mfp/F_\mfp)$-invariant divisors, namely, the even degree divisors generated by those of the form $\tau+\bar\tau$. Write $\Delta_{F_\mfp}^0$ for the degree zero subgroup.
Let 
\[
\mcE(2\ell_{\alpha_S}):=\bigotimes_{\mfp\in S}\mcE(2\ell_{\alpha_\mfp}),\quad \hat F_S^\ti=\bigotimes_{\mfp\in S}\hat F_\mfp^\ti,\quad \hat K_S^\ti=\bigotimes_{\mfp\in S}\hat K_\mfp^\ti,\quad \tno{St}_{\Z_p}(F_S):=\bigotimes_{\mfp\in S}\tno{St}_{\Z_p}(F_\mfp),
\]
where $\hat F_\mfp^\ti$ and $\hat K_\mfp^\ti$ denote $p$-adic completions and tensor products are taken with respect to $\Z_p$.
We recall the morphisms $\ell_{\alpha_\mfp}:F_\mfp^\ti\rightarrow I/I^2$ of \eqref{defeqell}, providing the morphism
\begin{equation}
    \ell_{\alpha_S}:\hat F_S^\ti\stackrel{\bigotimes\ell_{\alpha_\mfp}}{\longrightarrow} \bigotimes_{\mfp\in S}I/I^2\stackrel{m}{\longrightarrow} I^r/I^{r+1}.
\end{equation}
Moreover, we define
\[
\Delta_{S}:=\bigotimes_{\mfp\in S}\Delta_{\mfp},\quad \Delta_{S}^0:=\bigotimes_{\mfp\in S}\Delta_{\mfp}^0,\quad\Delta_{F_S}:=\bigotimes_{\mfp\in S}\Delta_{F_\mfp},\quad \Delta_{F_S}^0:=\bigotimes_{\mfp\in S}\Delta_{F_\mfp}^0,\quad \tno{St}_{F_S^\ti}:=\bigotimes_{\mfp\in S}\tno{St}_{F_\mfp^\ti},\quad \mcE_{K_S^\ti}:=\bigotimes_{\mfp\in S}\mcE_{K_\mfp^\ti},
\]
where the tensor products are with respect to $\Z$. 
Note that ${\rm ev}$ of \eqref{defev} extends to
$\tno{ev}_S:\Delta_S\longrightarrow\mcE_{K_S^\ti}$.

\begin{proposition}\label{corevdiv}
If we write $V^S=\bigotimes_{\mfp\not\in S}\Ind_P^G(\varphi\circ\hat\chi_\mfp)$, then we have $G(F_p)$-equivariant morphisms
\begin{eqnarray*}
\bar{\rm ev}_S:\Hom_{\Z_p}\ipa{V^S\oti\mcE(2\ell_{\alpha_S}),I^r/I^{r+1}}&\longrightarrow& \Hom_{\Z_p}\ipa{V^S\oti\Delta_{F_S},I^r/I^{r+1}},\\ 
\varphi&\longmapsto& \ipa{v^S\oti\bigotimes_{\mfp\in S}(z_\mfp+\bar z_\mfp)\mapsto\varphi\ipa{v^S\oti\bigotimes_{\mfp\in S}(\ell_{\alpha_\mfp}(\phi_{z_\mfp}\cdot\phi_{\bar z_\mfp}),1)}},
\end{eqnarray*}
making the following diagram commutative
\begin{center}
\begin{tikzcd}[nodes={inner sep=2pt}]
& \Hom_{\Z_p}(V^S\oti\Delta_{F_S},I^r/I^{r+1})\ar[rr]&& \Hom_{\Z_p}(V^S\oti\Delta_{F_S}^0,I^r/I^{r+1}) \\[-1.5em]
\D^\varphi_{\chi_p^{-2}}(R)^S_{\underline{\alpha}}\ar[ru,"{\bar{\rm ev}_S\circ r_S}"]\ar[rd,"{s_\varphi}"] \\[-1.5em]
& \Hom_{\Z_p}(V^S\oti\tno{St}_{\Z_p}(F_S),\Z_p)\ar[rr,"{{\rm ev}_S^\ast\circ\lambda_{\rm unv}}"]&&\Hom_{\Z_p}(V^S\oti\Delta_{F_S}^0,\hat F_S^\ti)\ar[uu,"\ell_{\alpha_S}"]
\end{tikzcd}
\end{center}
\end{proposition}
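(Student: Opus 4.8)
The plan is to construct $\bar{\rm ev}_S$ as a pullback along an explicit map of $G(F_p)$‑modules, and then to verify the square by evaluating both composites on a spanning family and reducing everything to the one‑place computation that underlies Lemma \ref{auxLemmaGH}.

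\emph{Construction of $\bar{\rm ev}_S$.} For each $\mfp\in S$ the evaluation map ${\rm ev}\colon\Delta_\mfp\to\mcE_{K_\mfp^\ti}$ of \eqref{defev} is a $G(F_\mfp)$‑equivariant homomorphism, and composing it coefficient‑wise with the character $\ell_{{\bf a}_\mfp}\colon K_\mfp^\ti\to\hat\mcO_{F_\mfp}^\ti$ (as in the Remark after \eqref{defextSt}) gives a $G(F_\mfp)$‑equivariant homomorphism $\Delta_\mfp\to\mcE(\ell_{{\bf a}_\mfp}\!\mid_{F_\mfp^\ti})$, $z\mapsto(\ell_{{\bf a}_\mfp}(\phi_z),1)$. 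Restricting to the $G(F_\mfp)$‑stable subgroup $\Delta_{F_\mfp}$ of $\Gal(K_\mfp/F_\mfp)$‑invariant divisors, the image lies in the subgroup of elements with even $\Z$‑component, which is $G(F_\mfp)$‑equivariantly isomorphic to $\mcE(\ell_{{\bf a}_\mfp}^2)$ via $(\psi,2k)\mapsto(\psi,k)$; the composite is precisely $\iota_{F_\mfp}\colon\Delta_{F_\mfp}\to\mcE(\ell_{{\bf a}_\mfp}^2)$, $z+\bar z\mapsto(\ell_{{\bf a}_\mfp}(\phi_z\cdot\phi_{\bar z}),1)$. Tensoring the $\iota_{F_\mfp}$ over $\mfp\in S$ and with $\mathrm{id}_{V^S}$ yields a $G(F_p)$‑equivariant map $V^S\otimes\Delta_{F_S}\to V^S\otimes\mcE(\ell_{{\bf a}_S}^2)$; then $\bar{\rm ev}_S$ is by definition the pullback it induces on $\Hom(-,I^r/I^{r+1})$, and both the stated formula and $G(F_p)$‑equivariance are immediate.

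\emph{Reducing the commutativity.} It suffices to compare the two composites on $v^S\otimes\bigotimes_{\mfp\in S}D_\mfp$ with $v^S=\bigotimes_{\mfp\notin S}\varphi_\mfp(f_\mfp)$ and $D_\mfp=(z_\mfp+\bar z_\mfp)-(w_\mfp+\bar w_\mfp)\in\Delta_{F_\mfp}^0$, since these span $V^S\otimes\Delta_{F_S}^0$. Put $h_\mfp(x):=\tfrac{(x-z_\mfp)(x-\bar z_\mfp)}{(x-w_\mfp)(x-\bar w_\mfp)}\in F_\mfp^\ti$ for $x\in\PP^1(F_\mfp)$ and let $\pi_\mfp\colon\mcL_\mfp\to\PP^1(F_\mfp)$ be the projection; the identity $\phi_z\bsm a&b\\c&d\esm=cz+d$ shows that $\iota_{F_\mfp}(D_\mfp)$ is represented by $\big(\ell_{{\bf a}_\mfp}(h_\mfp\circ\pi_\mfp),0\big)$. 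As $D_\mfp$ has degree zero the exponent $\underline y$ in Proposition \ref{proprS} vanishes, so by the definition of $r_S$
\[
(\bar{\rm ev}_S\circ r_S)(\mu)\big(v^S\otimes\textstyle\bigotimes_{\mfp\in S}D_\mfp\big)\equiv\int_{\mcL_p}\Big(\prod_{\mfp\notin S}f_\mfp(c_\mfp,d_\mfp)\Big)\prod_{\mfp\in S}\ell_{{\bf a}_\mfp}\big(h_\mfp(\pi_\mfp(c_\mfp,d_\mfp))\big)\,d\rho_1\mu\pmod{I^{r+1}},
\]
while, unwinding \eqref{morextSt}, \eqref{defev}, \eqref{definition.multiplicativeintegral2} and using that $\ell_{{\bf a}_S}=\bigotimes_{\mfp\in S}\ell_{{\bf a}_\mfp}$ is $\Z_p$‑linear into the $\Z_p$‑module $I^r/I^{r+1}$, the bottom composite sends $\mu$ to
\[
v^S\otimes\textstyle\bigotimes_{\mfp\in S}D_\mfp\;\longmapsto\;\lim_{(\mcU_\mfp)_\mfp}\sum_{(U_\mfp)_\mfp}\Big(\prod_{\mfp\in S}\ell_{{\bf a}_\mfp}\big(h_\mfp(x_{U_\mfp})\big)\Big)\cdot\rho_1\mu\big(v^S\otimes\textstyle\bigotimes_{\mfp\in S}\indi_{U_\mfp}\big).
\]

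\emph{Matching the two sides.} Approximate each $h_\mfp\circ\pi_\mfp$ by its step‑function truncations $\prod_{U_\mfp}h_\mfp(x_{U_\mfp})^{\indi_{\pi_\mfp^{-1}U_\mfp}}$ over finer and finer coverings $\mcU_\mfp$ of $\PP^1(F_\mfp)$; since $\ell_{{\bf a}_\mfp}$ is multiplicative into the $\Z_p$‑module $I_\mfp/I_\mfp^2$, it turns these into $\sum_{U_\mfp}\ell_{{\bf a}_\mfp}(h_\mfp(x_{U_\mfp}))\,\indi_{\pi_\mfp^{-1}U_\mfp}$, and one expands $\prod_{\mfp\in S}\ell_{{\bf a}_\mfp}(h_\mfp\circ\pi_\mfp)=\prod_{\mfp\in S}\big(1+(\ell_{{\bf a}_\mfp}(h_\mfp\circ\pi_\mfp)-1)\big)$. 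Any term of this expansion involving strictly fewer than $r$ of the factors depends, as a function on $\mcL_p$, only on the coordinates away from some $\mfp_0\in S$, so its pairing with $\mu$ factors through $\rho_{{\bf k}_p^{\underline 1^{\mfp_0}}}\mu$, which vanishes by the definition of $\D_{{\bf k}_p^{-2}}(\Lambda_F)_{\underline{\bf a}}^S$; hence only the fully multilinear term contributes modulo $I^{r+1}$, and — all divisors having degree zero, so that no reference‑function correction term appears, exactly as in the $y=0$ case of the computation in the proof of Lemma \ref{auxLemmaGH} — its class in $I^r/I^{r+1}$ is the right‑hand side of the second display. This proves the commutativity. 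The main obstacle is making this last step rigorous at all $r$ places simultaneously: one must upgrade the one‑variable Lemma \ref{auxLemmaGH} to the $r$‑fold product setting, keeping careful track of the $\PP^1(F_\mfp)$‑coverings, the tensor factorizations of $\tno{St}_{\Z_p}(F_S)$ and $\mcE(\ell_{{\bf a}_S}^2)$, and the filtration by powers of the augmentation ideal $I$. This is most cleanly organized as an induction on $r=\#S$, using Fubini on $\mcL_p=\mcL_\mfp\times\mcL_{p\setminus\mfp}$, Lemma \ref{lemmaap1} to stay inside the relevant spaces of degree‑zero distributions, and the inductive mechanism already present in the proof of Proposition \ref{proprS}.
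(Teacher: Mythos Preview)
Your proposal is correct and follows the same approach as the paper: both argue by direct evaluation of the two composites on elements of the form $v^S\otimes\bigotimes_{\mfp\in S}D_\mfp$ with $D_\mfp\in\Delta_{F_\mfp}^0$, unwinding the definitions of $r_S$, $\bar{\rm ev}_S$, and the multiplicative integral, and identifying both sides with the same Riemann-sum limit modulo $I^{r+1}$. Your write-up is in fact more explicit than the paper's: you spell out the construction of $\bar{\rm ev}_S$ as a pullback, and you make precise the step---left implicit in the paper---where one expands $\prod_{\mfp\in S}\phi_\mfp=\prod_{\mfp\in S}(1+(\phi_\mfp-1))$ and kills the lower-order terms using that $\rho_1\mu$ factors through the Steinberg quotient at each $\mfp\in S$. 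One small notational slip: in that expansion you should write $\phi_\mfp$ (the $\hat\mcO_{F_\mfp}^\ti\subset\Lambda_\mfp^\ti$-valued function appearing in $r_S$) rather than $\ell_{{\bf a}_\mfp}(h_\mfp\circ\pi_\mfp)$, since the latter already lives in $I_\mfp/I_\mfp^2$; the point is exactly that $\phi_\mfp-1\equiv\ell_{{\bf a}_\mfp}(h_\mfp\circ\pi_\mfp)\bmod I_\mfp^2$. The closing remarks about needing an induction on $r$ are overcautious: once the cross terms are gone, the identification with the Riemann sum is immediate and handles all places at once, as the paper's computation shows.
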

\newcommand{\limcovpu}{{\lim_{\mcU\in\tno{Cov}(\PP^1(F_\mfp))}}}
\begin{proof}
    By proposition \ref{D-EforgenS},
    \[
    \bar{\rm ev}_S\circ r_S\mid_{V^S\oti\Delta_{F_S}^0}=\bar{\rm ev}_S\circ \iota^\ast\circ r_S=\bar{\rm ev}_S\circ m_\ast\circ \lambda_{\rm unv}\circ s_\varphi,
    \]
    where in this expression $\lambda_{\rm unv}:\Hom_{\Z_p}(V^S\oti\tno{St}_{\Z_p}(F_S),\Z_p)\rightarrow \Hom_{\Z_p}(V^S\oti\bigotimes_{\mfp\in S}\tno{St}_{\bar I/I^2},\bigotimes_{\mfp\in S}I/I^2)$. The result then follows from the easily verifiable relation $\bar{\rm ev}_S\circ m_\ast\circ  \lambda_{\rm unv}=\ell_{\alpha_S}{\rm ev}_S^\ast\circ\lambda_{\rm unv}$.
\end{proof}

\subsection{$p$-adic periods and L-invariants}\label{Linv}

Fix a prime $\mfp\mid p$ and let $\phi_\lambda^\mfp\in H_\ast^u(G(F)_+,\mcA^{\mfp\cup\infty}({\rm St}_{\Z_p}(F_\mfp),{\Z_p}))^\lambda$ be a modular symbol associated with an elliptic curve $E/F$ with multiplicative reduction at $\mfp$ as in remark \ref{remintMS}. 
The short exact sequence 
\[
0\longrightarrow \Delta_\mfp^0\longrightarrow\Delta_\mfp\longrightarrow\Z\longrightarrow 0,
\]
provides a connection morphism
\[
H_\ast^u(G(F)_+,\mcA^{\mfp\cup\infty}(\Delta_\mfp^0,\hat K_\mfp^\ti))^\lambda\stackrel{c}{\longrightarrow} H_\ast^{u+1}(G(F)_+,\mcA^{\mfp\cup\infty}(\hat K_\mfp^\ti))^\lambda.
\]
Moreover, commutative diagram \eqref{commdiagramevuniv} shows that 
\[
c({\rm ev}_\mfp^\ast\circ\lambda_{\rm unv})\phi_\lambda^\mfp\in H_\ast^{u+1}(G(F)_+,\mcA^{\mfp\cup\infty}(q_{E,\mfp}^{\Z_p}))^\lambda\subset H_\ast^{u+1}(G(F)_+,\mcA^{\mfp\cup\infty}(\hat F_\mfp^\ti))^\lambda,
\]
for some $q_{E,\mfp}\in F_\mfp^\ti$. 
The generalized Oda's conjecture (\cite[Conjecture 3.8]{guitart2017automorphic}) asserts that, in fact, the elliptic curve $E/F_\mfp$ is isogenous to the Tate curve defined by the quotient $\bar F_\mfp/q_{E,\mfp}^\Z$. Inspired by the classical work of Greenberg and Stevens (see \cite{GS}), Gehrmann and Rosso prove Oda's conjecture in \cite{gehrmann} and their strategy relies on relating the automorphic period $q_{E,\mfp}$ with the derivative of the $U_\mfp$-eigenvalue of a family passing through $\phi_\lambda^\mfp$. It turns out that our work allows us to easily deduce their same result:
\begin{theorem}\label{THMvanishq}
Let $\Phi_\lambda^\mfp\in H_\ast^u(G(F)_+,\mcA^{\mfp\cup\infty}(\D_{{\bf k}_\mfp^{-2}}(\Lambda_F)_{{\bf a}_\mfp}))^\lambda$ be a family lifting $\phi_\lambda^\mfp$ for some ${\bf a}_\mfp\in\Lambda_F^\ti$. Then
\[
\ell_{{\bf a}_\mfp}(q_{E,\mfp})=0\in I/I^2,
\]
where $I=\ker(\varphi_1:\Lambda_F\rightarrow\Z_p)$ is the kernel of the specialization morphism.
\end{theorem}
\begin{proof}
Since the family $\Phi_\lambda^\mfp$ specializes to the Steinberg representation, we deduce that it has a preimage
$\Phi_\lambda^\mfp\in H_\ast^u(G(F)_+,\mcA^{\mfp\cup\infty}(\D^\varphi_{{\bf k}_\mfp^{-2}}(\Lambda_F)^{\mfp}_{{\bf a}_\mfp}))^\lambda$, where $\varphi=\varphi_1$.
If we consider the exact sequence 
\[
H_\ast^u(G(F)_+,\mcA^{\mfp\cup\infty}(\Delta_{F_\mfp},\hat F_\mfp^\ti))^\lambda\stackrel{{\rm res}}{\longrightarrow} H_\ast^u(G(F)_+,\mcA^{\mfp\cup\infty}(\Delta_{F_\mfp}^0,\hat F_\mfp^\ti))^\lambda\stackrel{c}{\longrightarrow} H_\ast^{u+1}(G(F)_+,\mcA^{\mfp\cup\infty}(\hat F_\mfp^\ti))^\lambda,
\]
then by Proposition \ref{corevdiv},
\[
{\rm res}(\bar{\rm ev}_\mfp\circ r_\mfp)\Phi_\lambda^\mfp=\ell_{{\bf a}_\mfp}({\rm ev}_\mfp^\ast\circ\lambda_{\rm unv})\phi_\lambda^\mfp\mid_{\Delta_{F_\mfp}^0}.
\]
This implies that 
$\ell_{{\bf a}_\mfp}c({\rm ev}_\mfp^\ast\circ\lambda_{\rm unv})\phi_\lambda^\mfp\mid_{\Delta_{F_\mfp}^0}=0$.
Since $c({\rm ev}_\mfp^\ast\circ\lambda_{\rm unv})\phi_\lambda^\mfp\mid_{\Delta_{F_\mfp}^0}$ lies in $H_\ast^{u+1}(G(F)_+,\mcA^{\mfp\cup\infty}(q_{E,\mfp}^{2\Z_p}))^\lambda$, we deduce that $\ell_{{\bf a}_\mfp}(q_{E,\mfp})=0$ from the fact that $I/I^2$ is torsion free.
\end{proof}

As showed in \S \ref{locsystevsgroupcoho}, when $F$ is totally real the elements ${\bf a}_\mfp\in \Lambda_F$ are the eigenvalues of the $U_\mfp$-operators acting on the Hida family. By definition
$\ell_{{\bf a}_\mfp}(\varpi_\mfp)= {\bf a}_\mfp\inv-1\in I/I^2$.
Thus, the relation $\ell_{{\bf a}_\mfp}(q_{E,\mfp})=0$ implies
\[
\ord_{\mfp}(q_{E,\mfp})\ipa{{\bf a}_\mfp\inv-1}+\log_\mfp(q_{E,\mfp})\equiv0\;\;{\rm mod}\;I^2. 
\]
Hence, we have obtained that the image of ${\bf a}_\mfp-1$ in $I/I^2$ agrees with the automorphic $\mcL$-invariant $\mcL_\mfp:=\log_\mfp(q_{E,\mfp})/\ord_\mfp(q_{E,\mfp})$. With the formalism previously described, we have shown that the derivative of ${\bf a}_\mfp$ with respect to the weight variable is given by the L-invariant $\mcL_\mfp$. To conclude the proof of Oda's conjecture, one has to show a similar relation between the the derivative of  ${\bf a}_\mfp$ and the (arithmetic) $\mcL$-invariant attached to the Tate's period of $E$, but such a relation has been obtained in several degrees of generality by Greenberg-Stevens, Colmez, Ding and others (see for example \cite{Ding}). We note that our approach is very similar to that of Gehrmann and Rosso because, as we do in proposition \ref{D-EforgenS}, they relate big principal series $C_{{\bf k}_\mfp^{-2}}(\mcL_\mfp,\Lambda_F)\simeq {\rm Ind}_P^G(\hat{\bf k}_\mfp)$ with the extensions of the Steinberg representation described in \S \ref{subsection.steinbergrep}.

\section{Plectic points and Hida-Rankin $p$-adic L-functions}

\subsection{Plectic points}\label{plecticpoints}

Let $S_0$ be a set of places $\mfp$ above $p$ that are non-split at $T$ and the representation $V_\mfp^{\Z_p}$ is $\tno{St}_{\Z_p}(F_\mfp)(\varepsilon_\mfp)$, the twist of $\tno{St}_{\Z_p}(F_\mfp)$ by a character $\varepsilon_\mfp:G(F_\mfp)\ra \pm 1$ given by $g\mapsto (\pm 1)^{v_\mfp\det g}$.
Let $\phi_\lambda^{S_0}\in H_\ast^u(G(F)_+,\mcA^{{S_0}\cup\infty}(V_{S_0}^{\Z_p},\Z_p))^\lambda$ be the modular symbol associated with an elliptic curve $E/F$ as in remark \ref{remintMS}. 
Notice that the restriction $\varepsilon_\mfp:T(F_\mfp)\ra\pm1$ is non trivial only if $\varepsilon_\mfp\neq 1$ and $T$ ramifies at $\mfp$.

We observe that, for all $\mfp\in {S_0}$, morphism \eqref{definition.multiplicativeintegral} gives rise to an analogous $G(F_\mfp)$-equivariant morphism
\[
\Hom(\tno{St}_{\Z_p}(F_\mfp)(\varepsilon_\mfp),{\Z_p}) \longrightarrow \Hom(\ddivz_\mfp,\hat K_\mfp^\ti)(\varepsilon_\mfp).
\]
Moreover, the composition $\tno{ev}_{S_0}^\ast\circ\lambda_{\tno{unv}}$ provides a morphism
\[
\tno{ev}_{S_0}^\ast\circ\lambda_{\tno{unv}}:\Hom(\tno{St}_{\Z_p}(F_{S_0})(\varepsilon_S),\Z_p) \longrightarrow \Hom(\ddivz_{S_0},\hat K_{S_0}^\ti)(\varepsilon_{S_0}),\qquad \varepsilon_{S_0}=\prod_{\mfp\in {S_0}}\varepsilon_\mfp:G(F_{S_0})\rightarrow\pm 1.
\]

If we write $H_{\varepsilon_\mfp}/K_\mfp$ for the extension cut out by $\varepsilon_\mfp$, then by \cite[Corollary 5.4]{Silverman}
\begin{equation}\label{non-splitTate}
E(K_\mfp)=\left\{u\in H_{\varepsilon_\mfp}^\times/q_\mfp^\Z:\;u^{-\varepsilon_\mfp(\tau)}u^\tau\in q_\mfp^\Z\right\},\qquad 1\neq\tau\in\Gal(H_{\varepsilon_\mfp}/K_\mfp).    
\end{equation}
Hence, we no longer have a Tate uniformization $K_\mfp^\times/q_\mfp^\Z\sim E(K_\mfp)$ but an isomorphism
\[
K_\mfp^\times/q_\mfp^\Z\sim E(K_\mfp)_{\varepsilon_\mfp}:=\{P\in E(H_{\varepsilon_\mfp});\;P^\tau=\varepsilon_\mfp(\tau)\cdot P\}.
\]

By Hypothesis, $V_{S_0}^{\Z_p}=\tno{St}_{\Z_p}(F_{S_0})(\varepsilon_{S_0})$. 
Hence, given a class $\phi_\lambda^{S_0}\in H_\ast^u(G(F)_+,\mcA^{{S_0}\cup\infty}(V_{S_0}^{\Z_p},\Z_p))^\lambda$ generating our automorphic representation $\rho\mid_{G(\A_F^{{S_0}\cup\iy})}$, we can consider 
\[
\tno{ev}_{S_0}^\ast\circ\lambda_{\tno{unv}}(\phi_\lambda^{S_0})\in H_\ast^u(G(F)_+,\mcA^{{S_0}\cup\iy}(\ddivv_{S_0}^0,\hat K_{S_0}^\ti)(\varepsilon_{S_0}))^\lambda,
\]
where $\varepsilon_{S_0}$ is now seen as a character of $G(F)$ by means of the composition $G(F)\hookrightarrow G(F_{S_0})\stackrel{\varepsilon_{S_0}}{\longrightarrow}\pm 1$. Fornea and Gehrmann show in \cite{fornea2021plectic} (see also \cite{HerMol1}) that this class can be uniquely extended to a class
\[
\psi_\lambda^{S_0}\in H_\ast^u(G(F)_+,\mcA^{{S_0}\cup\iy}(\ddivv_{S_0},\hat E(K_{S_0})_{\varepsilon_{S_0}})(\varepsilon_{S_0}))^\lambda,\qquad \hat E(K_{S_0})_{\varepsilon_{S_0}}:=\bigotimes_{\mfp\in {S_0}}\hat E(K_\mfp)_{\varepsilon_\mfp}=\bigotimes_{\mfp\in {S_0}}\hat K_\mfp^\ti/q_\mfp^{\Z_p},
\]
generating $\rho\mid_{G(\A_F^{{S_0}\cup\iy})}$, where $\hat E(K_\mfp)_{\varepsilon_\mfp}$ is the $p$-adic completion of $E(K_\mfp)_{\varepsilon_\mfp}$.

For all $\mfp\in S$, let $\tau_\mfp\in\uhp_\mfp$ be the point fixed by $T(F)$ as in \S \ref{deltap}.
Notice that we have a well-defined morphism of $G(F)_+$-modules
\begin{equation}
 \Z[G(F)_+/T(F)_+] \longrightarrow \ddivv_{S_0}; \qquad  n\cdot gT(F)_+ \longmapsto n\ipa{\bigotimes_{\mfp\in {S_0}}g\tau_\mfp}.
\end{equation}
It induces a $G(F)_+$-equivariant morphism
\[
\mcA^{{S_0}\cup\iy}(\ddivv_{S_0},\hat E(K_{S_0})_{\varepsilon_{S_0}})(\varepsilon_{{S_0}})\longrightarrow \mcA^{{S_0}\cup\iy}(\Z[G(F)_+/T(F)_+],\hat E(K_{S_0})_{\varepsilon_{S_0}})(\varepsilon_{{S_0}}),\qquad \psi\mapsto\psi\mid_{(\tau_\mfp)_\mfp}.
\]
Since  $\mcA^{{S_0}\cup\iy}(\Z[G(F)_+/T(F)_+],\hat E(K_{S_0})_{\varepsilon_{S_0}})(\varepsilon_{{S_0}})\simeq \Ind_{T(F)_+}^{G(F)_+}\ipa{\mcA^{{S_0}\cup\iy}(\hat E(K_{S_0})_{\varepsilon_{S_0}}(\varepsilon_{{S_0}}))}$ (see \cite[Lemma 4.1]{guitart2017automorphic}),
we obtain  
$\psi^{S_0}_\lambda\mid_{(\tau_\mfp)_\mfp}\in H_\ast^u(T(F)_+,\mcA^{{S_0}\cup\iy}(\hat E(K_{S_0})_{\varepsilon_{S_0}})(\varepsilon_{{S_0}}))^\lambda$ by Shapiro's lemma.

Let us consider now the subspace of functions
\[
C(\mcG_T,\ovl \Q)^{\varepsilon_{S_0}}:=\{f\in C(\mcG_T,\ovl \Q),\;\Theta^*f\mid_{T(F_{S_0})}=\varepsilon_{S_0}\},
\]
where, as above, $\varepsilon_{S_0}:T(F_{S_0})\ra\pm 1$ is the product of the local $\varepsilon_\mfp$.
In this situation, the Artin map provides a decomposition:
\begin{equation}
C(\mcG_T,\ovl \Q)^{\varepsilon_{S_0}}=\bigoplus_{\lambda:T(F)/T(F)_+\ra\icla{\pm 1}}H^0\ipa{T(F)_+,C^0(T(\A_F^{{S_0}\cup\iy}),\ovl \Q)(\varepsilon_{S_0})}.
\end{equation}
Given a locally constant character $\xi\in C(\mcG_T,\ovl \Z)^{\varepsilon_{S_0}}$ we can consider $\xi_\lambda\in H^0\ipa{T(F)_+,C^0(T(\A_F^{{S_0}\cup\iy},\ovl \Z)(\varepsilon_{S_0}))}$ its $\lambda$-component.
Then, we can define a twisted \tbf{plectic point} as
\[P_\xi^{S_0}=\left(\eta^{S_0}\cap \xi_\lambda\right)\cap \psi^{S_0}_\lambda\mid_{(\tau_\mfp)_\mfp}\in \hat E(K_{S_0})_{\varepsilon_{S_0}}\oti_{\Z_p} \ovl \Z_p,\]
where again the cap product is with respect to the pairing $\langle\cdot,\cdot\rangle_+$ of \eqref{equation.pairingplus} twisted by $\epsilon_{S_0}$.

\subsection{$p$-adic L-functions with variables of type weight and level}

Let
$\phi_\lambda^p\in H_\ast^u\ipa{G(F)_+,\mcA^{p\cup\infty}(V_p,V(\underline{k})_{\C_p})}^\lambda$ be an ordinary cohomology class generating an automorphic representation $\pi$. By Proposition \ref{OCmodsymb} together with the fact that $\D_{\chi_p^{-2}}(\C_p)=\D_{\chi_p^{-2}}(\mfo_{\C_p})\otimes_{\mfo_{\C_p}}\C_p$, the modular symbol $\phi_\lambda^p$ provides (up-to constant) an overconvergent 
$\hat\phi_\lambda^p\in H_\ast^u\ipa{G(F)_+,\mcA^{p\cup\infty}(\D_{\chi_p^{-2}}(\mfo_{\C_p})_{\underline{\alpha}})}^\lambda$,
for some locally polynomial character $\chi_p\inv$.
Write $\varphi=\varphi_{\chi_p}$ and let us consider the Hida family 
\[
\Phi_\lambda^p\in H_\ast^u\ipa{G(F)_+,\mcA^{p\cup\infty}(\D_{{\bf k}_p^{-2}}(\Lambda_F)_{{\bf\underline{a}}})}^\lambda
\]
specializing to $\hat\phi_\lambda^p$ through the morphism $s_{\varphi}$ of \eqref{eqspecialization}. Similarly as in Theorem \ref{relOCpadicLfunct},
we define 
\[
\int_{\mcG_T}fd\mu_{\Phi_\lambda^p}=\ipa{(\Theta^\ast f)\cap\eta}\cap \delta_p^\ast(\Phi_\lambda^p),\qquad f\in C(\mcG_T,\Lambda_F)
\]
where $\delta_p: C_{c}(T(F_p),\Lambda_F) \rightarrow {\rm Ind}_P^G(\hat{\bf k}_p)\stackrel{\varphi_p}{\simeq}C_{{\bf k}_p^{-2}}(\mcL_p,\Lambda_F)$ is defined as in \eqref{eq:deltaSexplicit}.
Indeed, we can think of $\delta_p^\ast(\Phi_\lambda^p)$ and $\rho^\ast f\cap\eta$ as elements
\begin{eqnarray*}
\delta_p^\ast(\Phi_\lambda^p)&\in& H_\ast^u\ipa{G(F)_+,\mcA^{p\cup\infty}(C_c(T(F_p),\Lambda_F),\Lambda_F)}^\lambda,\\
\rho^\ast f\cap\eta &\in& H_u(T(F),C_{\rm fc}(T(\A_F),\Lambda_F))=H_u\ipa{T(F),{\rm Ind}_{T(F)_+}^{T(F)}C_c^0\ipa{T(\A_F^{p\cup\infty}),C_{c}(T(F_p),\Lambda_F)}},
\end{eqnarray*}
hence the pairing \eqref{equation.pairingnoplus} applies and the cap-product is well defined. The measure $\mu_{\Phi^p_\lambda}$ is a $p$-adic L-function with variables of type weight and level since its specialization at different weights $\theta_p:\mfo_{F_p}^\ti\ra\C_p$ provides different measures $s_{\varphi_{\theta_p}}(\mu_{\Phi_\lambda^p})$ of $\mcG_T$. In particular
$s_{\varphi}(\mu_{\Phi_\lambda^p})=\mu_{\phi_\lambda^p}$,
by theorem \ref{relOCpadicLfunct}. Thus $\mu_{\Phi_\lambda^p}$ interpolates $\mu_{\phi_\lambda^p}$ as the weight varies.

\subsection{Hida-Rankin $p$-adic L-function}

Let $\xi:\mcG_T\ra\bar \Z^\ti$ be a locally constant character such that $\Theta^\ast\xi\mid_{T(F_\infty)}=\lambda$. Using the natural ring homomorphism $\bar\Z\subset\Lambda_F\otimes\bar\Z$, it can be seen as a function in $C(\mcG_T,\Lambda_F\otimes\bar\Z)$.
We consider
\[
L_p(\phi^p_\lambda,\xi,{\bf k}_p):=\int_{\mcG_T}\xi d\mu_{\Phi_\lambda^p}\in\Lambda_F\oti_{\Z_p}\bar\Z_p.
\]
We can think of $L_p(\phi^p_\lambda,\xi,{\bf k}_p)$ as a restriction of the $p$-adic L-function $\mu_{\Phi_\lambda^p}$ to the weight variable.

Let $1$ be the trivial weight, $\varphi=\varphi_1:\Lambda_F\rightarrow\Z_p$ and assume that $\hat\phi_\lambda^p=s_\varphi\ipa{\Phi_\lambda^p}$ is associated with an elliptic curve $E/F$.
Similarly as in \S \ref{plecticpoints}, let ${S_0}$ be a set of primes $\mfp$ above $p$ such that:
\begin{itemize}
    \item $T$ does not split at any $\mfp\in {S_0}$.
    
    \item  The representation $V_\mfp^{\Z_p}$ is $\tno{St}_{\Z_p}(F_\mfp)(\varepsilon_\mfp)$.
    
    \item  We have that $\Theta^\ast\xi\mid_{T(F_{S_0})}=\varepsilon_{S_0}\mid_{T(F_{S_0})}$.
\end{itemize}
As discussed in \S \ref{plecticpoints}, for any $\mfp\in {S_0}$ one can identify $E(K_\mfp)_{\varepsilon_\mfp}$ with $K_\mfp^\ti/q_\mfp^\Z$. Under this identification, the non-trivial automorphism $\sigma\in\Gal(K_\mfp/F_\mfp)$ provides an involution on $E(K_\mfp)_{\varepsilon_\mfp}$ that we will denote by $P\mapsto\ovl P$. Such an involution corresponds to $\ovl P=\varepsilon_\mfp\bsm\varpi_\mfp&\\&1\esm P^\sigma$ if $T(F_\mfp)$ is inert.
For any point $P\in E(K_\mfp)_{\varepsilon_\mfp}$, we can think its (twisted) trace $P+\bar P\in E(F_\mfp)_{\varepsilon_\mfp}$ as an element 
\[
{\rm Tr}(P):=P+\bar P\in F_\mfp^\ti/q_\mfp^{2\Z}\subset E(F_\mfp)_{\varepsilon_\mfp}:=\{Q\in E(H_{\varepsilon_\mfp}); \;Q^\tau=\varepsilon_\mfp(\tau)\cdot Q,\;\bar Q=Q\}.
\]
Hence, by theorem \ref{THMvanishq} it makes sense to consider 
$\ell_{{\bf a}_\mfp}\circ{\rm Tr}(P)
\in I/I^2$, where $I=\ker(\varphi)$.
This implies that, given a plectic point $P_\xi^{{S_0}}\in \hat E(K_{S_0})_{\varepsilon_{S_0}}\oti_\Z\bar\Z$, we can consider 
\[
\ell_{\underline{{\bf a}}}\circ{\rm Tr}(P_\xi^{{S_0}}):=\ipa{\prod_{\mfp\in {S_0}}\ell_{{\bf a}_\mfp}\circ{\rm Tr}}(P_\xi^{{S_0}})\in I^r/I^{r+1}\oti_\Z\bar\Z,\qquad r:=\#{S_0}.
\]
\begin{remark}\label{evvS}
Recall that $\phi_\lambda^p\in  H_\ast^u(G(F),\mcA^{p\cup\infty}(V^{S_0}\oti_{\Z_p} \tno{St}_{\Z_p}(F_{S_0})(\varepsilon_{S_0}),\Z_p)(\lambda))$. Since for any $v^{S_0}\in V^{S_0}$ we have a $G(F)$-equivariant morphism
\[
\mcA^{p\cup\infty}(V^{S_0}\oti_{\Z_p} \tno{St}_{\Z_p}(F_{S_0})(\varepsilon_{S_0}),\Z_p)\longrightarrow \mcA^{{S_0}\cup\infty}(\tno{St}_{\Z_p}(F_{S_0})(\varepsilon_{S_0}),\Z_p);\qquad \phi\longmapsto \phi(v^{S_0}),
\]
we can consider $\phi_\lambda^p(v^{S_0})\in H_\ast^u(G(F),\mcA^{{S_0}\cup\infty}(\tno{St}_{\Z_p}(F_{S_0})(\varepsilon_{S_0}),\Z_p)(\lambda))$. By means of $\phi_\lambda^p(v^{S_0})$ and a character $\xi$ as in \S \ref{plecticpoints}, we can construct the corresponding plectic point (depending on $v^{S_0}$)
\[
P_\xi^{S_0}(v^{S_0})\in \hat E(K_{S_0})_{\varepsilon_{S_0}}\oti_{\Z_p}\bar\Z_p.
\]
\end{remark}
For every $\mfq\in p\setminus {S_0}$, let $J_{\mfq}$ be the unique element in $P\subset G(F_\mfq)$ that also lies in the normalizer of $T(F_\mfp)$. Recall that, just before theorem \ref{THMintprop}, we define $\mfq$-stabilized elements $f_{0,\mfq}$ and we choose $c_\xi$-admissible Eichler orders $\mcO_{N,\mfq}$. Moreover, we discuss the existence of $k_{0,\mfq}\in T(F_\mfq)$ such that $k_{0,\mfq}^{-1}J_\mfq\in\mcO_{N,\mfq}$.
Let us choose a non-zero $\mcO_{N,\mfq}^\times$-invariant $v_{0,\mfq}\in V_\mfq$ and write $v_0^S=\bigotimes_{\mfq\in p\setminus S}v_{0,\mfp}\in V^S$. We recall the constants $C(\pi_\mfp)$ and $C(\pi_\mfp,\xi_\mfp)$ from theorem \ref{THMintprop} and define
\[
0\neq \epsilon^{S_0}(\pi,\xi)=\prod_{\mfq\in p\setminus {S_0}}\frac{\langle f_{0,\mfq},f_{0,\mfq}\rangle_\mfp}{\langle v_{0,\mfp},v_{0,\mfp}\rangle_\mfp}\frac{\xi_\mfq^{-1}(k_{0,\mfq})}{{\rm vol}(\mfo_{K_\mfq}^\ti/\mfo_{F_\mfq}^\ti)^2}\frac{C(\pi_\mfq,\xi_\mfq)C(\pi_\mfq)}{L(1/2,\Pi_\mfq,\xi_\mfq)}\cdot\left\{\begin{array}{ll}
        1,&n_{\xi_\mfq}= 0;\\
        L(1,\psi_{K_\mfp})^{-2},&n_{\xi_\mfq}> 0;
    \end{array}\right.
\]
The following theorem is the main result of the paper. 
\begin{theorem}\label{mainTHM}
Assume that $r=\#S_0\neq 0$,
then we have that 
\[
L_p(\phi^p_\lambda,\xi,{\bf k}_p)\in I^{r}\oti_\Z\bar\Q.
\]
Moreover, if the representations $\pi_\mfp$ are quotients of $\Ind_P^G(\hat\chi_\mfp^0)^0$, where $\hat\chi_\mfp^0$ is unramified, then
\[
L_p(\phi^p_\lambda,\xi,{\bf k}_p)\equiv [{\rm U}_+^{S_0}:{\rm U}_+]^{-1}\cdot\epsilon^{S_0}(\pi,\xi)^{\frac{1}{2}}\cdot\ell_{{\bf a}_{S_0}}\circ{\rm Tr}(P_\xi^{{S_0}}(v_0^{S_0}))\;\;({\rm mod}\;I^{r+1}\oti_\Z\bar\Q).
\]
\end{theorem}
\bpf 
Recall that $\Phi_\lambda^p$ has a preimage in $\Phi_\lambda^p\in H_\ast^u(G(F)_+,\mcA^{p\cup\infty}(\D_{{\bf k}_p^{-2}}(\Lambda_F)_{\underline{\bf a}^\ast}^S(\varepsilon_S)))^{\lambda}$.
Since the functor $N\mapsto H_\ast^r(G(F)_+,\mcA^{S\cup\iy}(N))$ commutes with direct limits,  
$\Phi_\lambda^p\in H_\ast^u(G(F),\mcA^{p\cup\iy}(\D^{\varphi_n}_{{\bf k}_p^{-2}}(\Lambda_{n})_{\underline{\bf a}^\ast}^{S_0}(\varepsilon_{S_0}))(\lambda))$, for some $n\in\N$, where $\underline{\bf a}^\ast=\ipa{\varepsilon_\mfp\bsm\varpi_\mfp&\\&1\esm\cdot{\bf a}_\mfp}_\mfp$.
Hence, we can apply $r_{S_0}$ and $\bar{\rm ev}_{S_0}$ of propositions \ref{D-EforgenS} and \ref{corevdiv} and we obtain
\[
\bar{\rm ev}_{S_0}\circ r_S\ipa{\Phi_\lambda^p}\in H^u(G(F),\mcA^{p\cup\iy}(V^{S_0}\oti_{\Z_p}\Delta_{F_{S_0}},I_n^r/I_n^{r+1})(\varepsilon_{S_0})(\lambda)),
\]
where $I_n=\ker(\varphi_n)$.

Let $\Delta_T:=\Z[G(F)/T(F)]$.
Since $\tau_\mfp$ and $\ovl\tau_\mfp$ are $T(F)$-invariant, one can construct a $G(F)$-module morphism
\begin{center}
\begin{tikzcd}
\Delta_T \ar[r,hook] & \Delta_{F_S}
\subset \Delta_S,
\qquad
\sum_{i}g_i\cdot T(F) \ar[r,mapsto] & \sum_i\bigotimes_{\mfp\in S} g_i\cdot (\tau_\mfp+\ovl\tau_\mfp) \\[-2em]
\end{tikzcd}
\end{center}
The morphism $\tno{ev}_{S_0}^\ast\circ\lambda_{\tno{unv}}$ restricts to a $G(F)$-equivariant $\Hom(\tno{St}_{\Z_p}(F_{S_0})(\varepsilon_{S_0}),\Z_p) \ra \Hom(\Delta_T^0,\hat F_{S_0}^\ti)(\varepsilon_{S_0})$, where $\Delta_T^0$ is the set of degree zero divisors in $\Delta_T$.
Thus, for all $v^{S_0}\in V^{S_0}$, we obtain the following diagram with $M=\hat F_{S_0}^\ti$ or $M=\bigotimes_{\mfp\in {S_0}}\hat F_\mfp^\ti/q_\mfp^{2\Z_p}\subseteq \bigotimes_{\mfp\in {S_0}}\hat E(F_\mfp)_{\varepsilon_\mfp}=:\hat E(F_{S_0})_{\varepsilon_{S_0}}$
\begin{center}\small
\begin{tikzcd}
 &  \phi^p_\lambda(v^{S_0})\in H_\ast^u(G(F),\mcA^{{S_0}\cup\iy}(\tno{St}_{\Z_p}(F_{S_0})(\varepsilon_{S_0}),\Z_p)(\lambda)) \ar[d,"\lambda^{\rm univ}"] \\
 H_\ast^u(G(F),\mcA^{{S_0}\cup\iy}(\mcE_{F_{S_0}^\ti},M)(\varepsilon_S)(\lambda))\ar[d,"{\rm ev}\mid_{\Delta_T}"]\ar[r] &  H_\ast^u(G(F),\mcA^{{S_0}\cup\iy}(\tno{St}_{F_{S_0}^\ti},M)(\varepsilon_{S_0})(\lambda)) \ar[d] \\
H_\ast^u(T(F),\mcA^{{S_0}\cup\iy}(M)(\varepsilon_{S_0})(\lambda)) \ar[r,"\tno{res}"] & H_\ast^u(G(F),\mcA^{{S_0}\cup\iy}(\Delta_T^0,M)(\varepsilon_{S_0})(\lambda)) \\[-2em]
\end{tikzcd}
\end{center}
since $\mcA^{{S_0}\cup\iy}(\Delta_T,M)(\lambda)={\rm coInd}_{T(F)}^{G(F)}(\mcA^{{S_0}\cup\iy}(M)(\lambda))$. If $M=\bigotimes_{\mfp\in {S_0}}\hat F_\mfp^\ti/q_\mfp^{2\Z}$, we have that $\lambda^{\rm univ}\phi^p_\lambda(v^{S_0})$ extends to an element $\bar\phi^p_\lambda(v^{S_0})\in  H_\ast^u(G(F),\mcA^{{S_0}\cup\iy}(\mcE_{F_{S_0}^\ti},M)(\varepsilon_{S_0})(\lambda))$.
It is clear that
\[{\rm Tr}\ipa{P_\xi^{S_0}(v^{S_0})}=\eta^{S_0}\cap\xi_\lambda\cap {\rm ev}\mid_{\Delta_T}\bar\phi^p_\lambda(v^{S_0})\in \hat E(F_{S_0})_{\varepsilon_{S_0}}\oti_{\Z_p}\bar\Z_p.\]
If we apply the logarithm $\ell_{{\bf a}_{S_0}}:M\rightarrow I^r/I^{r+1}\subseteq I_n^r/I_n^{r+1}$, by proposition \ref{corevdiv}, we have that   
\[\ell_{{\bf a}_{S_0}}\circ{\rm Tr}\ipa{P_\xi^{S_0}(v^{S_0})}=\eta^{S_0}\cap\xi_\lambda\cap \ell_{{\bf a}_{S_0}}{\rm ev}\mid_{\Delta_T}\bar\phi^p_\lambda(v^{S_0})=\eta^{S_0}\cap\xi_\lambda\cap {\rm ev}\mid_{\Delta_T}r_{S_0}\Phi_\lambda^p(v^{S_0}).
\]
where ${\rm ev}\mid_{\Delta_T}r_{S_0}\Phi_\lambda^p(v^{S_0})\in H_\ast^u(T(F),\mcA^{{S_0}\cup\iy}(I_n^r/I_n^{r+1})(\varepsilon_{S_0})(\lambda))$ is defined analogously as in remark \ref{evvS}.

Write $\ovl{(\cdot)}$ for reduction modulo $I_n^{r+1}$. Let $\Phi\in \mcA^{p\cup\infty}(\D^{\varphi_n}_{{\bf k}_p^{-2}}(\Lambda_{n})_{\udl{\bf a}^\ast}^{S_0})$ and let $f^{S_0}=f_{p\setminus {S_0}}\oti f^p\in C^0_c(T(\A_F^{{S_0}\cup\infty}),\bar\Z_p)$, where $f_{p\setminus {S_0}}\in C^0_c(T(F_{p\setminus {S_0}}),\bar\Z_p)$ and $f^p\in C^0_c(T(\A_F^{p\cup\infty}),\bar\Z_p)$. Moreover, let $H\subseteq T(F_{p\setminus {S_0}})$ be a compact subgroup small enough so that $f_{p\setminus {S_0}}=\sum_xf_{p\setminus {S_0}}(x)\cdot\indi_{xH}$. If we write $\delta_{p}^{S_0}:=\bigotimes_{\mfp\in p\setminus {S_0}}\delta_\mfp$, we compute the pairing \eqref{equation.pairingplus}:
\begin{eqnarray*}
&&\langle f^{S_0}, {\rm ev}\mid_{\Delta_T}r_{S_0}\Phi(\delta_{p}^{S_0}(\indi_H))\rangle_+=\\
&&\qquad\qquad=\int_{T(\A_F^{p\cup\infty})}f^p(z)\int_{T(F_{p\setminus S})}f_{p\setminus {S_0}}(x)\cdot\ipa{\bar{\rm ev}_{S_0}\circ r_{S_0}}(\Phi)(z)\ipa{x\delta_{p}^{S_0}(\indi_H)\oti\bigotimes_{\mfp\in {S_0}}(\tau_\mfp+\ovl\tau_\mfp)}d^\ti xd^\ti z\\
&&\qquad\qquad={\rm vol}(H)\int_{T(\A_F^{p\cup\infty})}f^p(z)\cdot r_{S_0}\Phi(z)\ipa{\delta_{p}^{S_0}(f_{p\setminus {S_0}})\oti\bigotimes_{\mfp\in {S_0}}\ipa{\ell_{{\bf a}_\mfp}(\phi_{\tau_\mfp}\phi_{\bar\tau_\mfp}),1}}d^\ti z\\
&&\qquad\qquad={\rm vol}(H)\int_{T(\A_F^{p\cup\infty})}f^p(z)\cdot \ovl{\int_{\mcL_p}\delta_{p}^{S_0}(f_{p\setminus {S_0}})\cdot\prod_{\mfp\in {S_0}}\ipa{1+\ell_{{\bf a}_\mfp}\ipa{(c\tau_\mfp+d)(c\bar\tau_\mfp+d)}}d\Phi(z)}d^\ti z\\
&&\qquad\qquad={\rm vol}(H)\int_{T(\A_F^{p\cup\infty})}f^p(z)\cdot \ovl{\int_{\mcL_p}\delta_{p}^{S_0}(f_{p\setminus {S_0}})\cdot\prod_{\mfp\in {S_0}}{\bf k}_{\mfp}\inv\ipa{(c\tau_\mfp+d)(c\bar\tau_\mfp+d)}d\Phi(z)}d^\ti z\\
&&\qquad\qquad=
{\rm vol}(H)\cdot\ovl{\langle f^{S_0}\oti\indi_{T(F_{S_0})},\delta_p^\ast\Phi \rangle_+},
\end{eqnarray*}
where the  fifth equality follows from lemma \ref{lemma:mordelta}.
Hence, by lemma \ref{lemma.onedotseven} and relation \eqref{equation:capprod}, we obtain 
\begin{eqnarray*}
\ovl{L_p(\phi^p_\lambda,\xi,{\bf k}_p)}&=&\ovl{\ipa{(\Theta^\ast \xi)\cap\eta}\cap\delta_p^\ast(\Phi_\lambda^p)}=[{\rm U}_+^{S_0}:{\rm U}_+]\inv\ovl{\ipa{(\Theta^\ast \xi^{S_0}\oti\varepsilon_{S_0})\cap(\eta^{S_0}\cap\indi_{T(F_{S_0})})}\cap\delta_p^\ast(\Phi_\lambda^p)}\\
&=&[{\rm U}_+^{S_0}:{\rm U}_+]^{-1}{\rm vol}(H)^{-1}\eta^{S_0}\cap\xi_\lambda\cap {\rm ev}\mid_{\Delta_T}r_{S_0}\Phi_\lambda^p(\delta_{p}^{S_0}(\indi_H))\\
&=&[{\rm U}_+^{S_0}:{\rm U}_+]^{-1}{\rm vol}(H)^{-1}\ell_{{\bf a}_{S_0}}\circ{\rm Tr}\ipa{P_\xi^{S_0}(\delta_{p}^{S_0}(\indi_H))}.
\end{eqnarray*}
Since $I_n^r/I_n^{r+1}\oti_\Z\Q=I^r/I^{r+1}\oti_\Z\Q$ and $\phi^p_\lambda(tv^{S_0})=t\phi^p_\lambda(v^{S_0})$, for all $t\in T(F_{p\setminus {S_0}})$, the morphism
\[
\psi^{S_0}:V^{S_0}\longrightarrow I^r/I^{r+1}\oti_\Z\bar\Q,\qquad v^{S_0}\longmapsto \ell_{{\bf a}_{S_0}}\circ{\rm Tr}\ipa{P_\xi^{S_0}(v^{S_0})},
\]
satisfies $\psi^{S_0}(tv^{S_0})=\Theta^\ast\xi(t)\inv\cdot \psi^{S_0}(v^{S_0})$ for all $t\in T(F_{p\setminus {S_0}})$. By Saito-Tunnel (see \cite{Sa} and \cite{Tu}) the space 
\[\Hom_{T(F_{p\setminus {S_0}})}(V^{S_0}\oti\Theta^\ast\xi,I^r/I^{r+1}\oti_\Z\bar\Q)\] 
is at most one dimensional. Moreover, similarly as in \eqref{localfactlambda}, we have the pairing 
\begin{equation}\label{auxdeflambdaq}
    \lambda_\mfq(f_1,f_2):=\int_{T(F_\mfq)}\xi_\mfq(t)\langle\pi_\mfq(t)f_1,\pi_\mfq(J_\mfq)f_2\rangle_\mfq d^\times t;
\end{equation}
and $\lambda^{S_0}:=\prod_{\mfq\in p\setminus {S_0}}\lambda_\mfq\in \Hom_{T(F_{p\setminus {S_0}})}(V^{S_0}\oti\Theta^\ast\xi,\bar\Q)^{\oti 2}$.
By  Saito-Tunnel
\begin{eqnarray*}
    \psi^{S_0}(\delta_{p}^{S_0}(\indi_H))&=&\psi^{S_0}(v^{S_0}_0)\cdot \lambda^{S_0}\ipa{\delta_{p}^{S_0}(\indi_H),v_0^{S_0}}\cdot \lambda^{S_0}(v_0^{S_0},v_0^{S_0})^{-1}\\
    \lambda^{S_0}(v^{S_0},\delta_{p}^{S_0}(\indi_H))&=&\lambda^{S_0}\ipa{v_0^{S_0},\delta_{p}^{S_0}(\indi_H)} \cdot \lambda^{S_0}\ipa{v^{S_0},v_0^{S_0}}\cdot\lambda^{S_0}(v_0^{S_0},v_0^{S_0})^{-1}.
\end{eqnarray*}
By the symmetry of \eqref{auxdeflambdaq}, $\lambda^{S_0}(\delta_{p}^{S_0}(\indi_H),v_0^{S_0})\lambda^{S_0}(v_0^{S_0},v_0^{S_0})^{-1}=\lambda^{S_0}(\delta_{p}^{S_0}(\indi_H),\delta_{p}^{S_0}(\indi_H))^{\frac{1}{2}}\lambda^{S_0}(v_0^{S_0},v_0^{S_0})^{-\frac{1
}{2}}$.
Thus,
\begin{equation*}
    \frac{\psi^{S_0}(\delta_{p}^{S_0}(\indi_H))}{{\rm vol}(H)\cdot\psi^{S_0}(v^{S_0}_0)}=\frac{\lambda^{S_0}(\delta_{p}^{S_0}(\indi_H),\delta_{p}^{S_0}(\indi_H))^{\frac{1}{2}}}{{\rm vol}(H)\cdot\lambda^{S_0}(v_0^{S_0},v_0^{S_0})^{\frac{1
}{2}}}=\prod_{\mfq\in p\setminus {S_0}}\frac{\langle \delta_\mfq(\indi_{H_\mfq}),\delta_\mfq(\indi_{H_\mfq})\rangle_\mfq^{\frac{1}{2}}}{\langle v_{0,\mfq},v_{0,\mfq}\rangle_\mfq^{\frac{1}{2}}}\frac{\lambda_\mfq(\delta_\mfq(\indi_{H_\mfq}),J_\mfq)^{\frac{1}{2}}}{{\rm vol}(H_\mfq)\cdot\lambda_\mfp(v_{0,\mfq},J_\mfq)^{\frac{1}{2}}}=\epsilon^{S_0}(\pi,\xi)^{\frac{1}{2}},
\end{equation*}
by \eqref{PhiPhiint}.
The result then follows from the definition of $\psi^{S_0}$.
\epf

\printbibliography

\end{document}